\newcommand{\Z}{{\mathbf Z}}
\newcommand{\R}{\mathbf{R}}
\newcommand{\T}{\mathbf{T}}
\renewcommand{\P}{\mathrm{P}}
\newcommand{\Dom}{\mathrm{Dom}}
\renewcommand{\L}{\mathrm{L}}
\newcommand{\D}{\mathrm{Dom}}
\renewcommand{\L}{\mathrm{L}}
\newcommand {\E}{\mathrm{E}}
\newcommand{\1}{{\bf 1}}
\newcommand{\intT}{\int_0^{2\pi}}
\newcommand{\intt}{\int_0^t}
\renewcommand{\d}{\rho}
\renewcommand{\Re}{\text{\rm Re}\,}
\newcommand{\sL}{\mathcal{L}}
\renewcommand{\-}{\hspace{.01 in}}
\newcommand{\sB}{\mathcal{B}}
\newcommand{\sF}{\mathcal{F}}
\newcommand{\cS}{\mathscr{S}}
\renewcommand{\phi}{\varphi}
\newcommand{\lip}{\mathrm{Lip}}
\newcommand{\ve}{\epsilon}
\renewcommand{\ss}{\hat{s}}
\newcommand{\yy}{\hat{y}}
\newcommand{\al}{\hat\alpha}
\newcommand{\G}{\Gamma}
\newtheorem{stat}{Statement}[section]
\newtheorem{hp}{H}
\newtheorem{proposition}[stat]{Proposition}
\newtheorem{lma}[stat]{Lemma}
\newtheorem{crl}[stat]{Corollary}
\newtheorem{rmk}[stat]{Remark}
\newtheorem{prp}[stat]{Proposition}
\theoremstyle{definition} 
\newtheorem{thm}[stat]{Theorem}
\newtheorem{definition}[stat]{Definition}
\newtheorem{remark}[stat]{Remark}
\numberwithin{equation}{section}
\begin{document}


\title{\bf Regularity of the density for a stochastic heat equation}	
\author{Pejman Mahboubi}

%
\maketitle
\begin{abstract}
  We study the smoothness of the density of the solution to the nonlinear heat equation $\partial_t u=\sL u+\sigma(u)\dot w$ on a torus with a periodic boundary condition, where $\sL$ is the generator of a L\'evy process on the torus. We use Malliavin calculus techniques to show that the law of the solution has a density with respect to the Lebesgue measure for all $t>0$ and $x\in\R$. 		
	\vskip .2cm \noindent{\it Keywords:}
		Malliavin's calculus, Stochastic partial differential equations, 
		space-time white noise, L\'evy processes,  Burkholder's inequality.\\
		
	\noindent{\it \noindent AMS 2000 subject classification:}
		Primary: 60H15; Secondary: 60H07.\\
		
\end{abstract}
{\bf{Acknowledgements.}} The author would like to thank his advisor, Professor D. Khoshnevisan, for many fruitful and encouraging discussions. His  suggestions, corrections and valuable comments made this work possible.

\section{Introduction and the main result}
Let $\{\dot w(t,x)\}_{t\geq0,x\in[0,2\pi]}$ denote space-time white noise on the torus $\T$, and let $\sigma:\R\to\R$ be a nice function. For every $T\in[0,\infty]$, we define $E_T:=[0,T]\times\T$, and let $E$ denote $E_\infty=\bigcup_{T>0}\E_T$. We aim  to establish a sufficient condition that ensures that the solution to the parabolic stochastic partial differential equation [SPDE]
\begin{equation}\label{A SPDE}
\left|\begin{array}{ll}
\partial_t u(t,x)=\sL u(t,x)+\sigma(u(t,x))\dot w&(t,x)\in E,\\
u(t,0)=u(t,2\pi)&t\geq 0,\\
u(0,x)=u_0(x)&x\in\T,
\end{array}\right.
\end{equation}
has a density which is smooth at all $t>0$ and $x\in\T$.  Here $\sL$ is the $L^2(\T)$-generator of a L\'evy process $X:=\{X_t\}_{t\geq 0}$, and acts only on the variable $x$, and $u_0$ is a bounded, measurable real function on $\mathbf T$. We denote by $C_b^\infty(\R)$ the space of all smooth functions on $\R$ which are bounded together with all their derivatives. Let $\varPhi:\mathbf{Z\to C}$ denote the characteristic exponent of $X$ normalized so that $\E\exp(inX_t)=\exp(-t\varPhi(n))$ for all $n\in\Z$ and $t>0$. In other words, $\varPhi$ is the Fourier multiplier of $\sL$ and $\hat\sL(n)=-\varPhi(-n)$ holds for all $n\in\Z$. The central result of this work is the the following. 
\begin{thm}\label{main}
Let $u$ be the mild solution to the equation~\eqref{A SPDE}, where $\sigma\in C_b^\infty(\R)$ and suppose that there is a $\kappa>0$ such that $\sigma\geq\kappa>0$. Assume that there exist finite constants $c, C\geq0$  and $1<\alpha\leq\beta\leq2$, such that 
\begin{equation}\label{hype1}
c|n|^\alpha\leq\Re\varPhi(n)\leq C|n|^\beta,
\end{equation}
for all $n\geq1$. If $\alpha\geq2\beta/(\beta+1)$, then $u(t,x)$ has a smooth density at every $t>0$ and $x\in\T$. This holds, in particular when 
$$c\; n^{\frac{4}{3}+\epsilon}\leq\varPhi(n)\hspace{.2 in}\forall n\geq1,$$
where $0<\epsilon<\frac{2}{3}$.
\end{thm}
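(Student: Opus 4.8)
The plan is to invoke the standard criterion of Malliavin calculus: if $u(t,x)\in\mathbb{D}^\infty$ and its Malliavin variance $\gamma_{t,x}:=\|Du(t,x)\|_{\sH}^2$ satisfies $\E[\gamma_{t,x}^{-p}]<\infty$ for every $p\ge1$, then the law of $u(t,x)$ has a $C^\infty$ density. Membership in $\mathbb{D}^\infty$ with moments of every order is the soft part: because $\sigma\in C_b^\infty(\R)$, it follows from a Picard-iteration argument together with the Burkholder and Minkowski inequalities, exactly as for the classical heat equation, the only input from \eqref{hype1} being the Dalang-type integrability $\int_0^1\nu(s)\,ds<\infty$, where $\nu(s):=\int_\T p_s(z)^2\,dz=\sum_{n\in\Z}e^{-2s\Re\varPhi(n)}$ and $p_s$ is the heat kernel of $\sL$ on $\T$; here $\nu(s)\lesssim s^{-1/\alpha}$ with $\alpha>1$ makes this finite. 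The entire force of the hypothesis $\alpha\ge 2\beta/(\beta+1)$ is spent on the negative moments of $\gamma_{t,x}$, which is where I will concentrate.

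Differentiating the mild formulation, $D_{r,z}u(t,x)$ solves the linear equation
\[
D_{r,z}u(t,x)=p_{t-r}(x-z)\,\sigma(u(r,z))+\int_r^t\!\!\int_\T p_{t-s}(x-y)\,\sigma'(u(s,y))\,D_{r,z}u(s,y)\,w(ds\,dy),
\]
so that $\gamma_{t,x}=\int_0^t\!\int_\T|D_{r,z}u(t,x)|^2\,dz\,dr$. To produce a lower bound I localize to a short window $[t-\delta,t]$ and split the integrand using $(a+b)^2\ge\tfrac12a^2-b^2$ with $a=p_{t-r}(x-z)\sigma(u(r,z))$ and $b=V_{r,z}(t,x)$ the stochastic remainder. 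Since $\sigma\ge\kappa$, the deterministic part is bounded below by
\[
\tfrac{\kappa^2}{2}\int_{t-\delta}^t\!\!\int_\T p_{t-r}(x-z)^2\,dz\,dr=\tfrac{\kappa^2}{2}\int_0^\delta\nu(s)\,ds\ge c_1\,\delta^{\,1-1/\beta},
\]
where the last inequality uses the upper bound $\Re\varPhi(n)\le C|n|^\beta$ in \eqref{hype1}, which forces $\nu(s)\ge\sum_n e^{-2sC|n|^\beta}\gtrsim s^{-1/\beta}$. Thus $\gamma_{t,x}\ge c_1\delta^{1-1/\beta}-B_\delta$, where $B_\delta:=\int_{t-\delta}^t\!\int_\T|V_{r,z}(t,x)|^2\,dz\,dr$.

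It remains to show that $B_\delta$ is of strictly smaller order. Applying Burkholder's inequality to $V_{r,z}$, bounding $|\sigma'|\le\|\sigma'\|_\infty$, and using Minkowski's integral inequality to pull the $L^p(\Omega)$ norm inside, one reduces the estimate to the deterministic kernel recursion for $F(s,y):=\|D_{r,z}u(s,y)\|_{L^{2p}(\Omega)}^2$, whose leading term is $p_{s-r}(y-z)^2$. Integrating the Malliavin variable $z$ out turns the convolution into $\int_\T p_{s-r}(y-z)^2\,dz=\nu(s-r)$, and one arrives at
\[
\|B_\delta\|_{L^p(\Omega)}\lesssim\int_{t-\delta}^t\!\int_r^t\nu(t-s)\,\nu(s-r)\,ds\,dr.
\]
Now the lower bound $\Re\varPhi(n)\ge c|n|^\alpha$ in \eqref{hype1} gives $\nu(u)\lesssim u^{-1/\alpha}$, and a Beta-function computation (valid because $\alpha>1$) yields $\int_r^t\nu(t-s)\nu(s-r)\,ds\lesssim(t-r)^{1-2/\alpha}$ and then $\|B_\delta\|_{L^p(\Omega)}\lesssim\delta^{\,2-2/\alpha}$, with implied constant depending on $p$. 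Hence $\E[B_\delta^p]\lesssim\delta^{\,p\,\theta}$ with $\theta=2(\alpha-1)/\alpha$ independent of $p$.

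Finally I balance the two scales. Choosing $\delta\asymp\epsilon^{\beta/(\beta-1)}$ so that $c_1\delta^{1-1/\beta}=2\epsilon$, the event $\{\gamma_{t,x}<\epsilon\}$ is contained in $\{B_\delta>\epsilon\}$, whence by Chebyshev
\[
\P(\gamma_{t,x}<\epsilon)\le\epsilon^{-p}\,\E[B_\delta^p]\lesssim\epsilon^{\,p(\theta\beta/(\beta-1)-1)}.
\]
The exponent is positive, and $p$ is arbitrary, precisely when $\theta>1-1/\beta$, i.e.\ $2-2/\alpha>1-1/\beta$, which rearranges to $\alpha>2\beta/(\beta+1)$; letting $p\to\infty$ shows $\P(\gamma_{t,x}<\epsilon)$ decays faster than any power of $\epsilon$, giving $\E[\gamma_{t,x}^{-p}]<\infty$ for all $p$ and hence the smooth density. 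The special case follows by taking $\beta=2$, so that $2\beta/(\beta+1)=4/3$ and $\alpha=\tfrac43+\epsilon$ satisfies the requirement strictly for $0<\epsilon<\tfrac23$, the upper bound on $\epsilon$ keeping $\alpha\le\beta$. The main obstacle is the remainder estimate: controlling $\E[B_\delta^p]$ requires a careful kernel-Gronwall analysis of the recursion for $F(s,y)$ to show that the correction terms beyond the leading $p_{s-r}(y-z)^2$ do not worsen the exponent $\theta$, and the boundary case $\alpha=2\beta/(\beta+1)$, where $\theta=1-1/\beta$ exactly, demands a sharper, e.g.\ logarithmic, refinement of these bounds.
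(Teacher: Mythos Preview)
Your proposal is correct and follows essentially the same route as the paper: localize the Malliavin variance to $[t-\delta,t]$, use $\sigma\ge\kappa$ together with the upper bound in \eqref{hype1} to get the deterministic lower bound $c_1\delta^{1-1/\beta}$, bound the stochastic remainder $B_\delta$ in $L^p$ by $\delta^{2(\alpha-1)/\alpha}$ via the lower bound in \eqref{hype1}, and balance via $\delta\asymp\epsilon^{\beta/(\beta-1)}$ and Chebyshev to obtain the same exponent condition $\alpha>2\beta/(\beta+1)$. The one place where the paper is more explicit is exactly the gap you flag at the end: it establishes the remainder bound $\E|B_\delta|^p\lesssim\delta^{2p(\alpha-1)/\alpha}$ through a Gronwall-type lemma for $W(\delta):=\sup_x\E\big(\int_{t-\delta}^t\!\int_\T|D_{s,y}u(t,x)|^2\,dy\,ds\big)^{p/2}$ rather than the heuristic Beta-function sketch, and, like your argument, its proof handles only the strict inequality $\alpha>2\beta/(\beta+1)$.
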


In Hypothesis  {\bf{H}\-\ref{condition0}} below, we will discuss briefly how the existence of the mild solution imposes a restriction on the underlying L\'evy process $X$. We would like to remark that when ~\eqref{A SPDE} is linear, if $\alpha,\beta\leq1$, then a solution does $not$ exist. This observation might explain why we need the condition $\alpha>1$ in Theorem~\ref{main}.

A significant byproduct of our method is the existence of finite Liapounov exponents for the Malliavin derivatives. Indeed, the existence of Malliavin derivatives is proven to be connected closely to the growth of $t\to Dv_n(t,x)$, where $v_n(t,x)$ is the approximating function in Picard scheme. The choice of our family of seminorms allows us to interpret this growth as a result about the Liaponov exponents of the the Malliavin derivatives. More specifically we have the following result :
\begin{thm}
Let $u=u(t,x)$ be the solution of~\eqref{A SPDE}. Under condition \mbox{{\bf{H}\-\ref{condition0}}} below, $u\in \mathbf D^{m,p}$ for all $m\geq1$ and $p\geq1$, and 
\begin{equation}
\limsup_{t\to\infty}\frac{\ln\E\|D^mu(t,x)\|_{H^{\otimes m}}^p}{t}<\infty.
\end{equation}
\end{thm}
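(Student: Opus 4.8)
The plan is to run the Picard approximation scheme for \eqref{A SPDE}, differentiate it repeatedly in the sense of Malliavin, and control every derivative through a single family of exponentially weighted seminorms. Write the mild formulation
\begin{equation*}
u(t,x)=(G_tu_0)(x)+\int_0^t\int_\T p_{t-s}(x-y)\,\sigma(u(s,y))\,w(ds\,dy),
\end{equation*}
where $p_t$ is the transition kernel of $X$ and $G_t$ the associated semigroup, and define the Picard iterates $v_0(t,x)=(G_tu_0)(x)$ and
\begin{equation*}
v_{n+1}(t,x)=(G_tu_0)(x)+\int_0^t\int_\T p_{t-s}(x-y)\,\sigma(v_n(s,y))\,w(ds\,dy).
\end{equation*}
For a random field $Z(t,x)$ taking values in $H^{\otimes m}$ I would use the seminorm
\begin{equation*}
\mathcal N_{p,\lambda}(Z):=\sup_{t\ge0}\,\sup_{x\in\T}\,e^{-\lambda t}\bigl(\E\|Z(t,x)\|_{H^{\otimes m}}^p\bigr)^{1/p},\qquad\lambda>0,
\end{equation*}
so that finiteness of $\mathcal N_{p,\lambda}(D^mu)$ for some $\lambda<\infty$ is exactly the asserted Liapounov bound.

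First I would establish membership in $\mathbf D^{m,p}$ by a double induction, outer on $m$ and inner on $n$. The iterate $v_0$ is deterministic, hence lies in every $\mathbf D^{m,p}$ trivially; and since $\sigma\in C_b^\infty(\R)$, the chain rule for Malliavin derivatives keeps $\sigma(v_n)\in\mathbf D^{m,p}$ whenever $v_n$ is, while the stochastic integral of a $\mathbf D^{m,p}$ integrand is again in $\mathbf D^{m,p}$. Differentiating the recursion, the first derivative obeys
\begin{equation*}
D_{r,z}v_{n+1}(t,x)=p_{t-r}(x-z)\,\sigma(v_n(r,z))\,\1_{\{r<t\}}+\int_r^t\int_\T p_{t-s}(x-y)\,\sigma'(v_n(s,y))\,D_{r,z}v_n(s,y)\,w(ds\,dy),
\end{equation*}
and $D^mv_{n+1}$ satisfies an analogous linear stochastic integral equation whose source term, produced by the Leibniz and Fa\`a di Bruno rules, is a finite sum of products of factors $D^{k_i}v_n$ with $\sum k_i\le m$; the only contribution carrying a derivative of the top order $m$ is the diagonal term $\sigma'(v_n)\,D^mv_n$ inside the integral, all remaining terms involving derivatives of order strictly less than $m$.

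The heart of the argument is the estimate. Applying Burkholder's inequality to the stochastic integral together with Minkowski's inequality in the $H^{\otimes m}$-variable, and using $\|\sigma^{(j)}\|_\infty<\infty$, I obtain a recursion of the form
\begin{equation*}
\sup_{x}\bigl(\E\|D^mv_{n+1}(t,x)\|_{H^{\otimes m}}^p\bigr)^{2/p}\le L_{m-1}(t)+C\int_0^t\Upsilon(t-s)\,\sup_x\bigl(\E\|D^mv_n(s,x)\|_{H^{\otimes m}}^p\bigr)^{2/p}\,ds,
\end{equation*}
where $L_{m-1}$ collects the lower-order terms, already controlled by the outer induction, and
\begin{equation*}
\Upsilon(r):=\|p_r\|_{L^2(\T)}^2=\sum_{n\in\Z}e^{-2r\,\Re\varPhi(n)}.
\end{equation*}
By the lower bound $\Re\varPhi(n)\ge c|n|^\alpha$ of \eqref{hype1} with $\alpha>1$, one has $\int_0^T\Upsilon(r)\,dr<\infty$ for every $T$, because $\int_0^1\sum_n e^{-2crn^\alpha}\,dr\asymp\sum_n n^{-\alpha}<\infty$. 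Multiplying the recursion by $e^{-\lambda t}$ and inserting $e^{-\lambda(t-s)}e^{-\lambda s}$ turns the convolution into
\begin{equation*}
\mathcal N_{p,\lambda}(D^mv_{n+1})^2\le \mathcal N_{p,\lambda}(\text{lower order})+C\Bigl(\int_0^\infty e^{-\lambda r}\Upsilon(r)\,dr\Bigr)\,\mathcal N_{p,\lambda}(D^mv_n)^2,
\end{equation*}
and since $\Upsilon$ is locally integrable and nonincreasing in $r$, the factor $\int_0^\infty e^{-\lambda r}\Upsilon(r)\,dr$ tends to $0$ as $\lambda\to\infty$ by dominated convergence. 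Choosing $\lambda$ large makes this a strict contraction, giving bounds on $\mathcal N_{p,\lambda}(D^mv_n)$ uniform in $n$ together with convergence of $D^mv_n$ in $L^p(\Omega;H^{\otimes m})$; closedness of $D^m$ then yields $u\in\mathbf D^{m,p}$ and $\mathcal N_{p,\lambda}(D^mu)<\infty$, which is precisely the claimed finite Liapounov exponent.

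The main obstacle I expect is twofold. First is the time-singularity of $p_{t-s}$ as $s\uparrow t$: the whole scheme closes only because condition {\bf{H}\-\ref{condition0}} (through $\alpha>1$ in \eqref{hype1}) makes $\Upsilon$ locally integrable, so controlling this singularity quantitatively is the crucial analytic input. Second is the combinatorial growth of the source term $L_{m-1}$ at high order $m$; I would tame it through the outer induction, so that when treating order $m$ every term of order $<m$ already satisfies an exponential bound in the same seminorm, and the products of such factors are estimated by H\"older's inequality against the boundedness of all the $\sigma^{(j)}$.
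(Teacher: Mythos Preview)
Your proposal is correct and follows essentially the same route as the paper: Picard iteration, exponentially weighted seminorms on $\|D^m\cdot\|_{H^{\otimes m}}$, Burkholder plus Minkowski to turn the stochastic integral into a convolution recursion, Fa\`a di Bruno for the lower-order source terms handled by the outer induction, and a contraction obtained by sending the weight $\lambda\to\infty$ so that $\int_0^\infty e^{-\lambda r}\|p_r\|_{L^2}^2\,dr\to0$. One small point to adjust: you justify the local integrability of $r\mapsto\|p_r\|_{L^2}^2$ via the polynomial lower bound $\Re\varPhi(n)\ge c|n|^\alpha$ from \eqref{hype1}, but the theorem is stated only under {\bf H\-\ref{condition0}} and the paper explicitly avoids \eqref{hype1} at this stage; hypothesis {\bf H\-\ref{condition0}} by itself already gives that $\Upsilon(\beta)=\int_0^\infty e^{-\beta r}\|p_r\|_{L^2}^2\,dr<\infty$ for every $\beta>0$, and then $\Upsilon(\beta)\downarrow0$ as $\beta\to\infty$ by monotone convergence, which is all the contraction requires.
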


Malliavin's calculus is an appropriate tool for the study of the existence and regularity of densities of functionals on the Wiener space. This method is normally implemented in two steps: 
\begin{description}
\item[Step 1] is to prove the existence of the Malliavin derivatives of all order,  and 
\item[Step 2] is the study of the corresponding Malliavin matrix and existence of the negative moments.
\end{description}

In ``Step 1'' we offer a new method, which, in contrast to the other works, does not rely on the approximations that use the properties of the transition probabilities of the L\'evy process.  This feature of our proof has enabled us to prove the Malliavin differentiability  of the solution for all L\'evy processes, for which the existence of the mild solution is proved. To emphasize, we mention that the conditions stated in terms of $\alpha$ and $\beta$ are not used in this step.

In ``Step 2'' we followed carefully \cite[pages 97-98]{minicourse}, and could find an ``$\epsilon$-room'' to extend the results from Brownian motion to a large group of L\'evy processes, characterized by the rate of the growth of their L\'evy exponents.

The idea for the existence and uniqueness of the solutions to ~\eqref{A SPDE} come from \cite{FKN} and \cite{FK}. A linearized version of \eqref{A SPDE} on $\R$, with vanishing initial data, in which the noise is additive, i.e, 
\begin{eqnarray}\label{linearized}
\left|\begin{array}{l}
\partial_tu(t,x)=\sL u(t,x)+\dot w,\\
u(0,x)=0,
\end{array}
\right.
\end{eqnarray}
is studied by Foondun et al. in  \cite{FKN}. They have shown a one-to-one correspondence between the existence of a unique random field solution to~\eqref{linearized} and  the existence of the a local time for the symmetrized underlying L\'evy process $\bar X$, where 
\begin{equation}\label{sym}
\bar X_t=X_t-X'_t\;\;\;\forall t\geq0,
\end{equation}
and $X'=\{X_t\}_{t\geq0}$ is an independent copy of $X$.

In \cite{FK}, the authors consider a multiplicative white noise and study the existence and uniqueness of the mild solution to the equation 
\begin{equation}\label{R}
\left|\begin{array}{ll}
\partial_t u=\sL u+\sigma(u)\dot w&t\geq0,x\in\R\\
u(x,0)=u_0(x)&x\in\R
\end{array}\right.
\end{equation}
with a nonnegative initial data $u_0$. In this paper, Foondun and Khoshnevisan combine the existence result of \cite{FKN} with a theorem of Hawkes \cite{Hawkes} to show that ~\eqref{R} has a strong solution if and only if $\upsilon(\beta)<\infty$, for some $\beta>0$ where 
\begin{equation*}
\upsilon(\beta):=\frac{1}{2\pi}\int_{-\infty}^\infty\frac{d\xi}{\beta+2\Re\phi(\xi)},
\end{equation*}
where $\phi$ denotes the characteristic exponent of $X$.
Therefore, it is natural to consider a solution to equation~\eqref{A SPDE} under a similar condition. We define
\begin{equation*}
\Upsilon(\beta):=\frac{1}{4\pi^2}\sum_{-\infty}^\infty \frac{1}{\beta+2\Re\varPhi(n)}.
\end{equation*}
It is clear from the definition of $\varPhi$ in ~\eqref{phi} below, that $\Upsilon(\beta)<\infty$ for some $\beta$, when Hypothesis {\bf{H}\-\ref{condition0}} below holds. The function $\Upsilon$ continues to have a crucial role in ``Step 1.'' The convergence of all Picard iterations relies on the fact that $\Upsilon(\beta)\to0$ as $\beta\to\infty$. 

The rest of of this paper is organized as follows. In section 2 we collect some results about L\'evy processes that are relevant to our results. At the end of this section we also state the result for existence and uniqueness of the solution to equation~\eqref{A SPDE}. Our proof for this result follows closely the proof in \cite{FK} and we avoid to repeat it here. In section 3 we review elements of the Malliavin calculus as economically as possible. In section 4 we show that the Malliavin derivative of $u$ of all order exists; here $u=u(t,x)$ is the solution to~\eqref{A SPDE}. In Section 5 we analyze the Malliavin matrix.

\section{L\'evy processes on the torus}
In this section we review some results about L\'evy processes.  This material will be used in sequel. 
Let $\{Y_t\}$ be a L\'evy process on $\R$, with $\phi$ as its characteristic exponent; i.e,
\begin{equation}
\E e^{i\lambda(Y_{s+t}-Y_s)}=e^{-t\phi(\lambda)},\hspace{.3 in}s, t\geq 0,\lambda\in\R.
\end{equation}	
As we discussed before --- see the paragraph before~\eqref{sym}--- we assume the following:
\begin{hp}\label{condition0}
Let $\bar{Y_t}=Y_t-Y'_t$, where $Y'$ is an independent copy f $Y$. $\bar{Y}_t$ has a local time. Results from \cite{Hawkes} imply that $\Upsilon(\beta)<\infty$, for all $\beta>0$.
\end{hp}
Then Lemma 8.1 in~\cite{FKN} tells us that under hypothesis {\bf{H}\-\ref{condition0}} process $Y_t$ has transition density $\{p_t(x,y)\}$ such that $\int_{\T}p_t(x,y)^2\;dy<\infty$ for all $x\in\T$. 

Let $\mathbf{T}:=[0,2\pi)$. Define a process $X_t$ on $\T$, via $Y_t$, by 
\begin{equation}\label{LPonTorus}
X_t=Y_t-2n\pi\hspace{.2 in}\text{when}\hspace{.2 in}2n\pi\leq Y_t<2(n+1)\pi.
\end{equation}

Let $\{q_t(x,\cdot)\}_{x\in T}$ denote the transition probability density for the process $X$.  A simple calculation shows that the transition densities of $X$ are given by 
\begin{equation}\label{q}
q_t(x,y)=\sum_{n=-\infty}^\infty p_t(x,y+2n\pi)\;\;\;\forall x,y\in\T.
\end{equation}
 Let us introduce a function $\varPhi:\Z\to\mathbf{C}$ by
\begin{equation}\label{phi}
\varPhi(n)=\phi(n)\hspace{.2 in}n\in\Z
\end{equation}
As it is shown below, $\varPhi$ is the characteristic exponent of the process $X$.
For $g\in L^2(\T)$ we have 
\begin{equation*}
g(x)=\sum_{n=-\infty}^\infty\hat{g}(n) e^{-inx},
\end{equation*}
where 
\begin{equation*}
\hat{g}(n)=\frac{1}{2\pi}\int_{\T} e^{inx}g(x)dx.
\end{equation*}
Since $q_t(x,y)$ is a function of $y-x$ for each $t\geq0$, we occasionally abuse notation and write $q_t(y-x)$ instead of $q_t(x,y)$.
\begin{lma}\label{dnstyPrpty}
Under Hypothesis {\bf H\-\ref{condition0}}, $q_t(x,\cdot)\in L^2(\T)$ for all $x\in\T$ and $t>0$. Furthermore,
\begin{eqnarray}\label{L2norm}
\hat{q}_t(x,n)=\frac{1}{2\pi}e^{inx}e^{-t\varPhi(n)},\hspace{.3 in}\|q_t(\cdot)\|^2_{L^2(\T)}=\frac{1}{4\pi^2}\sum_{n=-\infty}^\infty e^{-2t\Re\varPhi(n)},
\end{eqnarray}
and 
\begin{equation}\label{qfourier}
q_t(x,y)=\frac{1}{2\pi}\sum_{n=-\infty}^\infty e^{inx}e^{-t\varPhi(n)}e^{-iny}
\end{equation}
\end{lma}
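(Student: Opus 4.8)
The plan is to work entirely on the Fourier side. Since $q_t(x,\cdot)$ is a probability density it lies in $L^1(\T)$, so its Fourier coefficients are well defined, and I would compute them first. Starting from the periodization formula $q_t(x,y)=\sum_{m}p_t(x,y+2m\pi)$ and the convention $\hat q_t(x,n)=\frac{1}{2\pi}\int_\T e^{iny}q_t(x,y)\,dy$, I would pull the sum outside the integral (legitimate by Tonelli, as $p_t\geq0$) and substitute $z=y+2m\pi$. Because $e^{iny}=e^{inz}$ whenever $nm\in\Z$ and the translated intervals tile $\R$, the periodized integral collapses to an integral of $p_t(x,\cdot)$ over all of $\R$:
\[
\hat q_t(x,n)=\frac{1}{2\pi}\int_{-\infty}^\infty e^{inz}p_t(x,z)\,dz.
\]
This last integral is exactly $\E[e^{inY_t}\mid Y_0=x]=e^{inx}e^{-t\phi(n)}$, and since $\varPhi(n)=\phi(n)$ this yields the first claimed identity $\hat q_t(x,n)=\frac{1}{2\pi}e^{inx}e^{-t\varPhi(n)}$.

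Next I would establish square-integrability. By the previous step $|\hat q_t(x,n)|^2=\frac{1}{4\pi^2}e^{-2t\Re\varPhi(n)}$, so membership in $L^2(\T)$ reduces to the summability of $\sum_n e^{-2t\Re\varPhi(n)}$. This is where Hypothesis {\bf H\-\ref{condition0}} enters: it guarantees $\Upsilon(\beta)<\infty$, i.e. $\sum_n(\beta+2\Re\varPhi(n))^{-1}<\infty$, which forces $\Re\varPhi(n)\to\infty$ as $|n|\to\infty$. Combining the elementary bound $e^{-x}\leq 1/x$ with this growth (so that $\beta+2\Re\varPhi(n)\leq 4\Re\varPhi(n)$ for all large $|n|$) gives $e^{-2t\Re\varPhi(n)}\leq \frac{2}{t}\,(\beta+2\Re\varPhi(n))^{-1}$ for large $|n|$, and summability for each fixed $t>0$ follows from $\Upsilon(\beta)<\infty$. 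Hence the coefficient sequence is in $\ell^2$.

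Finally I would assemble the three statements. By the Riesz--Fischer theorem the series $\sum_n \frac{1}{2\pi}e^{inx}e^{-t\varPhi(n)}e^{-iny}$ converges in $L^2(\T)$ to some $g\in L^2(\T)$ whose Fourier coefficients coincide with those of $q_t(x,\cdot)$; by uniqueness of Fourier coefficients on $L^1(\T)$, it follows that $q_t(x,\cdot)=g$ almost everywhere. This simultaneously shows $q_t(x,\cdot)\in L^2(\T)$, delivers the Fourier-series representation~\eqref{qfourier}, and, via Parseval's identity, gives $\|q_t(\cdot)\|_{L^2(\T)}^2=\sum_n|\hat q_t(x,n)|^2=\frac{1}{4\pi^2}\sum_n e^{-2t\Re\varPhi(n)}$.

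I expect the only genuine obstacle to be the square-integrability step: the Fourier computation is a routine unwrapping of the periodization, but one must translate the analytic content of {\bf H\-\ref{condition0}} --- that $\bar Y$ possesses a local time, equivalently $\Upsilon(\beta)<\infty$ --- into the summability of $\sum_n e^{-2t\Re\varPhi(n)}$ for every fixed $t>0$, which is precisely what places $q_t(x,\cdot)$ in $L^2(\T)$.
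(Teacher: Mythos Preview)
Your proof is correct and follows the same overall architecture as the paper: compute the Fourier coefficients of $q_t(x,\cdot)$ by unwrapping the periodization, deduce $L^2$-membership from $\ell^2$-summability of the coefficients, and then invoke Parseval and Fourier inversion for the remaining identities.

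The one substantive difference is how you extract the summability of $\sum_n e^{-2t\Re\varPhi(n)}$ from Hypothesis~{\bf H\ref{condition0}}. The paper argues via a Laplace--Fubini trick: it observes that
\[
\int_0^\infty \sum_{n} e^{-(\beta+2\Re\varPhi(n))t}\,dt \;=\; \sum_n \frac{1}{\beta+2\Re\varPhi(n)} \;=\; 4\pi^2\,\Upsilon(\beta) \;<\; \infty,
\]
so the (monotone in $t$) integrand is finite for every $t>0$. You instead use a direct termwise comparison, noting that $\Upsilon(\beta)<\infty$ forces $\Re\varPhi(n)\to\infty$ and then bounding $e^{-2t\Re\varPhi(n)}\le \frac{2}{t}(\beta+2\Re\varPhi(n))^{-1}$ for large $|n|$ via $e^{-x}\le 1/x$. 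Both arguments are valid. Yours is slightly more elementary and gives the conclusion in one stroke; the paper's route has the incidental advantage of simultaneously proving the identity $\int_0^\infty e^{-\beta s}\|q_s\|_{L^2(\T)}^2\,ds=\Upsilon(\beta)$, which is exactly the content of Lemma~\ref{supineq} used later.
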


\begin{proof}
To show that $q_t(x,\cdot)\in L^2([0,2\pi))$ we need only to show that its Fourier coefficients are in $\ell^2(\Z)$. 
We can write $\hat q_t$ in terms of $\varPhi$ as follows:
\begin{eqnarray}\label{char}
\hat{q}_t(x,n)=\frac{1}{2\pi}\int_{\T} e^{iny}\;q_t(y-x)dy
=\frac{1}{2\pi}e^{inx}\int_{-\infty}^\infty e^{inz}p_t(z)\;dz
=\frac{1}{2\pi}e^{inx}e^{-t\varPhi(n)}.
\end{eqnarray}
Therefore, $\varPhi$ is the characteristic exponent of the L\'evy process $X_t$. To prove the second formula, we need only to show that the sum in ~\eqref{L2norm} converges, because then this equation would be the Parseval identity. An application of Fubini and {\bf H\-\ref{condition0}} imply that 
\begin{equation}\label{parseval}
\int_0^\infty\sum_{n=-\infty}^\infty e^{-(\beta t+2t\Re\varPhi(n))}dt=4\pi^2\Upsilon(\beta)<\infty
\end{equation}
Therefore, by then the continuity of the integrand,
$$\sum_{n=-\infty}^\infty e^{-(\beta t+2t\Re\varPhi(n))}<\infty\hspace{.5 in}\forall t>0.$$
Therefore $\sum_{n=-\infty}^\infty e^{-2t\Re\varPhi(n)}<\infty$ for all $t>0$. Finally,~\eqref{qfourier} is a consequence of the inversion formula.
\end{proof}

The transition densities $q_t$ induces a semigroup $T_t$ on $L^2(\T)$ defined by
\begin{equation}\label{semop}
T_tf(x)=\E^xf(X_t)=\int_{\T} f(y)q_t(x,y)dy.
\end{equation}
\begin{lma}
The semigroup operator defined in ~\eqref{semop} is a convolution operator and 
\begin{eqnarray}\label{semigroup}
T_tf(x)=\sum_{n=-\infty}^\infty e^{-inx}e^{-t\varPhi(-n)}\hat{f}(n).
\end{eqnarray}
\end{lma}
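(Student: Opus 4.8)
The plan is to read off both assertions directly from the Fourier representation of the transition density established in Lemma~\ref{dnstyPrpty}. The convolution structure is essentially built into the setup: since $q_t(x,y)$ depends only on the difference $y-x$, the operator $T_tf(x)=\int_{\T}f(y)q_t(y-x)\,dy$ is invariant under translations of $\T$, so that, writing $\check q_t(z):=q_t(-z)$, we have $T_tf=f*\check q_t$ and $T_t$ is a convolution operator in the usual sense. This is the easy half and requires no new estimate.

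For the Fourier-series formula, I would substitute the expansion~\eqref{qfourier} into~\eqref{semop},
\begin{equation*}
T_tf(x)=\int_{\T}f(y)\Bigl(\frac{1}{2\pi}\sum_{n=-\infty}^\infty e^{inx}e^{-t\varPhi(n)}e^{-iny}\Bigr)dy,
\end{equation*}
and then interchange sum and integral to get
\begin{equation*}
T_tf(x)=\frac{1}{2\pi}\sum_{n=-\infty}^\infty e^{inx}e^{-t\varPhi(n)}\int_{\T}f(y)e^{-iny}\,dy.
\end{equation*}
From the normalization of the Fourier coefficients one has $\int_{\T}f(y)e^{-iny}\,dy=2\pi\hat f(-n)$, so the expression collapses to $\sum_{n}e^{inx}e^{-t\varPhi(n)}\hat f(-n)$, and the reindexing $n\mapsto-n$ yields exactly~\eqref{semigroup}. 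As a consistency check, this says $T_t$ acts on the Fourier mode $e^{-inx}$ as multiplication by $e^{-t\varPhi(-n)}$, matching the relation $\hat\sL(n)=-\varPhi(-n)$ recorded in the introduction.

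The only point that genuinely requires care — and the step I expect to be the main obstacle — is justifying the interchange of summation and integration, since I do not want to assume pointwise absolute convergence of the Fourier series of $q_t$. The clean way around this is to argue in $L^2(\T)$ rather than pointwise. By Lemma~\ref{dnstyPrpty} the partial sums $q_t^N(x,\cdot):=\frac{1}{2\pi}\sum_{|n|\le N}e^{inx}e^{-t\varPhi(n)}e^{-in\,\cdot}$ converge to $q_t(x,\cdot)$ in $L^2(\T)$ for each fixed $x$ and $t>0$; since $f\in L^2(\T)$, continuity of the inner product gives
\begin{equation*}
\int_{\T}f(y)q_t^N(x,y)\,dy\longrightarrow\int_{\T}f(y)q_t(x,y)\,dy=T_tf(x),
\end{equation*}
while the left-hand side is precisely the $N$-th partial sum of the series in~\eqref{semigroup}. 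This delivers the identity as a limit of partial sums and simultaneously confirms convergence of the series, so no separate Fubini argument is needed.
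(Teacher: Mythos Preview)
Your argument is correct and follows essentially the same route as the paper: substitute the Fourier expansion~\eqref{qfourier} into~\eqref{semop} and interchange sum and integral. The only difference is in the justification of the interchange---the paper simply invokes Fubini (which is legitimate here, since Lemma~\ref{dnstyPrpty} applied at time $t/2$ gives $\sum_n e^{-t\Re\varPhi(n)}<\infty$, so the double sum/integral is absolutely convergent for $f\in L^2(\T)\subset L^1(\T)$), whereas you argue via $L^2$-convergence of the partial sums and continuity of the inner product; both are valid and your version is arguably cleaner.
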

\begin{proof}
Since
\begin{eqnarray*}
T_tf(x)
=\int_{\T} f(y)\frac{1}{2\pi}\left(\sum_{n=-\infty}^\infty e^{inx}e^{-t\varPhi(n)}e^{-iny}\right)dy,
\end{eqnarray*}
then an application of Fubini gives us the result.
\end{proof}

Let $\sL$ be the generator of $X_t$ in $L^2$ sense. This means
\begin{equation*}
\sL f(x)=\lim_{t\to0^+}\frac{T_tf(x)-f(x)}{t}\;\;\text{in}\; L^2(\T),
\end{equation*}
whenever the limit exists. It is natural to define 
\begin{eqnarray*}
\D[\sL]:=\left\{\phi\in L^2(\T):\sL(\phi):=\lim_{t\to0^+}\frac{T_t\phi-\phi}{t}\;\text{exists in}\;L^2(\T)\right\}.
\end{eqnarray*}
Next, we characterize $\D[\sL]$ in terms of the characteristic exponent.
\begin{prp} We have 
\begin{equation*}
\D[\sL]=\left\{f\in L^2(T):\sum_{n=-\infty}^\infty|\varPhi(-n)|^2|\hat f(n)|^2<\infty\right\}.
\end{equation*}
\end{prp}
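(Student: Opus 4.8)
The plan is to pass to Fourier coefficients and thereby reduce the $L^2(\T)$-convergence in the definition of $\sL$ to an $\ell^2(\Z)$-convergence, in which the sum appearing in the statement emerges as a squared norm. By the preceding lemma the $n$-th Fourier coefficient of $T_tf$ is $e^{-t\varPhi(-n)}\hat f(n)$, so the difference quotient $t^{-1}(T_tf-f)$ has Fourier coefficients
$$a_n(t):=\frac{e^{-t\varPhi(-n)}-1}{t}\,\hat f(n).$$
By Parseval's identity, $t^{-1}(T_tf-f)$ converges in $L^2(\T)$ as $t\to 0^+$ if and only if the sequence $(a_n(t))_n$ converges in $\ell^2(\Z)$, and in that case the Fourier coefficients of the limit are the coordinatewise limits of the $a_n(t)$. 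For each fixed $n$ one has $a_n(t)\to -\varPhi(-n)\hat f(n)$ as $t\to 0^+$, so the only possible limit is the sequence $\big(-\varPhi(-n)\hat f(n)\big)_n$, whose membership in $\ell^2(\Z)$ is exactly the condition $\sum_n|\varPhi(-n)|^2|\hat f(n)|^2<\infty$.

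To make this rigorous in both directions, the key analytic input is the elementary estimate $|e^{-z}-1|\le |z|$ whenever $\Re z\ge 0$, which follows by writing $e^{-z}-1=-\int_0^1 z\,e^{-sz}\,ds$ and bounding $e^{-s\Re z}\le 1$ on $[0,1]$. Since $\varPhi$ is the characteristic exponent of $X$, we have $|e^{-t\varPhi(-n)}|=e^{-t\Re\varPhi(-n)}\le 1$, i.e. $\Re\varPhi(-n)\ge 0$; applying the estimate with $z=t\varPhi(-n)$ yields the uniform-in-$t$ domination $|a_n(t)|\le |\varPhi(-n)|\,|\hat f(n)|$.

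For the inclusion $\supseteq$, assume $\sum_n|\varPhi(-n)|^2|\hat f(n)|^2<\infty$. Then the terms $|a_n(t)+\varPhi(-n)\hat f(n)|^2$ are dominated by $4|\varPhi(-n)|^2|\hat f(n)|^2$ and tend to $0$ for each $n$, so a dominated-convergence argument for series gives $(a_n(t))_n\to\big(-\varPhi(-n)\hat f(n)\big)_n$ in $\ell^2(\Z)$; hence $f\in\D[\sL]$ and $\sL f$ has Fourier coefficients $-\varPhi(-n)\hat f(n)$. For the inclusion $\subseteq$, suppose $f\in\D[\sL]$, so $(a_n(t))_n$ converges in $\ell^2(\Z)$ to $\widehat{\sL f}$. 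Since $\ell^2$-convergence forces coordinatewise convergence, $\widehat{\sL f}(n)=-\varPhi(-n)\hat f(n)$, and $\widehat{\sL f}\in\ell^2(\Z)$ then gives $\sum_n|\varPhi(-n)|^2|\hat f(n)|^2<\infty$.

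The only genuinely delicate point is the interchange of limit and summation, that is, upgrading the coordinatewise convergence $a_n(t)\to -\varPhi(-n)\hat f(n)$ to convergence in the $\ell^2(\Z)$-norm. I expect this to be the main obstacle, and it is precisely where the domination $|a_n(t)|\le|\varPhi(-n)|\,|\hat f(n)|$ — equivalently, the contractivity $\Re\varPhi(-n)\ge 0$ inherited from $\varPhi$ being a characteristic exponent — is needed. Everything else is bookkeeping with Parseval's identity.
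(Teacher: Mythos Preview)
Your argument is correct and follows the same route as the paper's proof: pass to Fourier coefficients via Parseval and identify $\widehat{\sL f}(n)=-\varPhi(-n)\hat f(n)$. In fact your version is more complete, since the paper only spells out the inclusion $\subseteq$, whereas you also supply the converse via the domination $|a_n(t)|\le|\varPhi(-n)|\,|\hat f(n)|$ and dominated convergence in $\ell^2(\Z)$.
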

\begin{proof}
From the definition and the continuity of the Fourier transform,
\begin{eqnarray*}
\widehat{\sL f}(n)=\lim_{t\to0^+}\frac{\widehat{T_tf}(n)-\hat f(n)}{t}=
\widehat{\sL f}(n)=\hat f(n)\lim_{t\to0^+}\frac{e^{-t\varPhi(-n)}-1}{t}=-\varPhi(-n)\hat f(n)
\end{eqnarray*}
Then $\varPhi(-n)\hat f(n)\in\ell^2(\Z)$, which equivalent to what we wanted to prove.
\end{proof}

Therefore $\sL$ can be viewed as a convolution operator with Fourier multiplier $\hat{\sL}(n)=-\varPhi(-n)$. We state the result as follows.

\begin{lma}
The $L^2(\T)$ generator $\sL$ of $T_t$ can be written as
\begin{equation}
\sL u_0(x)=-\sum_{n=-\infty}^\infty e^{inx}\varPhi(-n)\hat{u}_0(n)\hspace{.2 in} x\in[0,2\pi),
\end{equation}
for all $u_0\in L^2(\T)$.
\end{lma}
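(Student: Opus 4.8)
The plan is to read the stated series off directly from the coefficient identity for $\sL$ that the preceding Proposition already supplies, so that the lemma reduces to a single application of Fourier inversion. First I would make explicit the restriction on $u_0$ that is left implicit in the statement: since $\sL u_0$ is defined only when $u_0\in\D[\sL]$, the identity should be read for such $u_0$, equivalently for those $u_0\in L^2(\T)$ with $\sum_{n}|\varPhi(-n)|^2|\hat u_0(n)|^2<\infty$.

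The core step is then immediate. The Proposition gives $\widehat{\sL u_0}(n)=-\varPhi(-n)\hat u_0(n)$ for every $n\in\Z$, while the characterization of $\D[\sL]$ guarantees that $\bigl(\varPhi(-n)\hat u_0(n)\bigr)_{n\in\Z}\in\ell^2(\Z)$. Hence $\sL u_0$ is a genuine element of $L^2(\T)$ whose Fourier series converges in $L^2(\T)$, and no interchange of limits beyond this convergent expansion is required. Inserting these coefficients into the inversion formula $g(x)=\sum_{n}\hat g(n)e^{-inx}$ with $g=\sL u_0$ yields
\[
\sL u_0(x)=\sum_{n=-\infty}^\infty\widehat{\sL u_0}(n)\,e^{-inx}=-\sum_{n=-\infty}^\infty e^{-inx}\,\varPhi(-n)\,\hat u_0(n),
\]
which is the asserted representation.

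There is no real analytic obstacle here; the only care needed is the bookkeeping of signs in the exponent so as to remain consistent with the convention $g(x)=\sum_n\hat g(n)e^{-inx}$ fixed earlier in the section. As a cross-check I would differentiate the semigroup representation~\eqref{semigroup} at $t=0^+$ in $L^2(\T)$: termwise differentiation of $e^{-t\varPhi(-n)}$ at the origin reproduces exactly the multiplier $-\varPhi(-n)$ together with the factor $e^{-inx}$, so that the two derivations agree and pin down the sign of the exponent in the final series.
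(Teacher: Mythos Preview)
Your argument is correct and is exactly the reasoning the paper intends: the lemma is stated there without a separate proof, as an immediate restatement of the preceding proposition's identity $\widehat{\sL f}(n)=-\varPhi(-n)\hat f(n)$ via Fourier inversion. Your careful remarks about the implicit restriction $u_0\in\D[\sL]$ and the sign of the exponent are apt; indeed, with the paper's convention $g(x)=\sum_n \hat g(n)e^{-inx}$, the inversion yields $e^{-inx}$ rather than the $e^{inx}$ printed in the lemma, so your version is the one consistent with the paper's own conventions.
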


We borrow the following lemma from \cite{FK}; it plays a key rule in the proof of the existence of Malliavin derivatives.
\begin{lma}\label{supineq}
For all $\beta>0$,
\begin{equation}
\sup_{t>0}e^{-\beta t}\int_0^t\|q_s\|^2_{L^2(T)}ds\leq\int_0^\infty e^{-\beta s}\|q_s\|_{L^2(T)}^2ds=\Upsilon(\beta).
\end{equation}
\end{lma}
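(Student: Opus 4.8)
The plan is to treat the two assertions separately, since the equality on the right is a direct computation and the inequality on the left follows from a simple monotonicity observation. Neither step is deep; the only point demanding a little care is the justification for interchanging a sum and an integral, and the appeal to Hypothesis~{\bf H\-\ref{condition0}} to guarantee finiteness.

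First I would establish the equality $\int_0^\infty e^{-\beta s}\|q_s\|_{L^2(\T)}^2\,ds=\Upsilon(\beta)$. Substituting the expression for $\|q_s\|_{L^2(\T)}^2$ furnished by Lemma~\ref{dnstyPrpty}, the left-hand side becomes $\frac{1}{4\pi^2}\int_0^\infty e^{-\beta s}\sum_{n=-\infty}^\infty e^{-2s\Re\varPhi(n)}\,ds$. Because every summand is nonnegative, Tonelli's theorem permits interchanging the sum and the integral, after which each term integrates to $1/(\beta+2\Re\varPhi(n))$. The resulting series is precisely the definition of $\Upsilon(\beta)$, and its finiteness is exactly what Hypothesis~{\bf H\-\ref{condition0}} provides; in fact this is the same calculation already recorded in~\eqref{parseval}.

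For the inequality I would observe that whenever $0\le s\le t$ one has $\beta t\ge\beta s$, hence $e^{-\beta t}\le e^{-\beta s}$ since $\beta>0$. Pulling the factor $e^{-\beta t}$ inside the integral and using nonnegativity of the integrand, this yields
\[
e^{-\beta t}\int_0^t\|q_s\|_{L^2(\T)}^2\,ds=\int_0^t e^{-\beta t}\|q_s\|_{L^2(\T)}^2\,ds\le\int_0^t e^{-\beta s}\|q_s\|_{L^2(\T)}^2\,ds\le\int_0^\infty e^{-\beta s}\|q_s\|_{L^2(\T)}^2\,ds,
\]
where the last step holds because enlarging the domain of integration of a nonnegative function can only increase the integral. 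Since the final bound does not depend on $t$, taking the supremum over $t>0$ gives the asserted inequality.

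The main (and essentially only) obstacle is thus the measure-theoretic bookkeeping in the equality step: one must confirm that the interchange of summation and integration is legitimate and that the limiting value $\Upsilon(\beta)$ is finite. Both are immediate here, the former by Tonelli applied to a nonnegative integrand and the latter by Hypothesis~{\bf H\-\ref{condition0}}, so the lemma follows with no further effort.
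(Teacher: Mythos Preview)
Your proof is correct and follows essentially the same approach as the paper: the equality is obtained by inserting the Parseval expression for $\|q_s\|_{L^2(\T)}^2$ from Lemma~\ref{dnstyPrpty} and interchanging sum and integral (exactly as in the paper's proof, which cites \eqref{L2norm} and \eqref{char}), while the inequality is dismissed by the paper as ``trivial'' and your monotonicity argument is precisely the intended justification.
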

\begin{proof} The inequality is trivial. The equality follows from ~\eqref{L2norm} and ~\eqref{char},
\begin{align*}
 \int_0^\infty e^{-\beta s}\|q_s\|^2_{L^2(T)}ds&= \int_0^\infty e^{-\beta s}\sum_{n=1}^\infty|\hat{q}_s(x,n)|^2ds\\
&= \sum_{n=-\infty}^\infty\int_0^\infty e^{-\beta s}|\hat{q}_s(x,n)|^2ds=\Upsilon(\beta).
\end{align*}
This finishes the proof.
\end{proof}

The following results are used in ``Step 2'' of our proof, i.e the existence of the negative moments.

\begin{lma}
Let $1<\alpha\leq 2$. There is $C\in(0,\infty)$ such that 
\begin{equation}\label{est.1}
\lim_{\lambda\to0}\lambda^\frac{1}{\alpha}\sum_{n=1}^\infty e^{-n^\alpha\lambda}=C.
\end{equation}
\end{lma}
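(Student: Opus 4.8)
The plan is to recognize the sum $\sum_{n=1}^\infty e^{-n^\alpha \lambda}$ as a Riemann-sum approximation to an integral, and to extract the leading-order singularity as $\lambda\to0$ by rescaling. First I would substitute $x = n\lambda^{1/\alpha}$, so that the natural comparison is between the sum and the integral $\int_0^\infty e^{-x^\alpha}\,dx$. Concretely, writing $f(s) = e^{-s^\alpha\lambda}$, the sum $\sum_{n=1}^\infty f(n)$ is a right-endpoint Riemann sum (with mesh $1$) for $\int_0^\infty e^{-s^\alpha\lambda}\,ds$, and the change of variables $s = \lambda^{-1/\alpha}x$ gives
\begin{equation*}
\int_0^\infty e^{-s^\alpha\lambda}\,ds = \lambda^{-1/\alpha}\int_0^\infty e^{-x^\alpha}\,dx.
\end{equation*}
This identifies the candidate constant as $C = \int_0^\infty e^{-x^\alpha}\,dx = \Gamma\bigl(1+\tfrac{1}{\alpha}\bigr)$, which is finite precisely because $\alpha>1$ (indeed any $\alpha>0$ would do for finiteness, but $\alpha>1$ is the regime of interest here).

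The main work is then to show that the error between the sum and the integral is of lower order, i.e. that
\begin{equation*}
\lambda^{1/\alpha}\left|\sum_{n=1}^\infty e^{-n^\alpha\lambda} - \int_0^\infty e^{-s^\alpha\lambda}\,ds\right| \longrightarrow 0.
\end{equation*}
Here I would use monotonicity: since $s\mapsto e^{-s^\alpha\lambda}$ is decreasing on $[0,\infty)$, standard integral-comparison bounds give
\begin{equation*}
\int_1^\infty e^{-s^\alpha\lambda}\,ds \leq \sum_{n=1}^\infty e^{-n^\alpha\lambda} \leq \int_0^\infty e^{-s^\alpha\lambda}\,ds,
\end{equation*}
so the difference is controlled by $\int_0^1 e^{-s^\alpha\lambda}\,ds \leq 1$, a quantity bounded uniformly in $\lambda$. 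Multiplying by $\lambda^{1/\alpha}\to0$ kills this $O(1)$ discrepancy, and combined with the exact scaling of the integral this yields the claimed limit.

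I expect the only genuine subtlety to be making the Riemann-sum/integral comparison airtight and confirming that the boundary term truly contributes at order $O(1)$ rather than something larger; the monotone-sandwich argument above handles this cleanly because the integrand is monotone decreasing, so no delicate analysis of the tail is needed. An alternative, perhaps more mechanical route would be to apply a Tauberian theorem or to compute the asymptotics via the Mellin transform: one has $\sum_{n\geq1} e^{-n^\alpha\lambda} = \tfrac{1}{2\pi i}\int \Gamma(s)\zeta(\alpha s)\lambda^{-s}\,ds$, whose rightmost pole at $s=1/\alpha$ (coming from $\zeta(\alpha s)$) produces the residue $\tfrac{1}{\alpha}\Gamma(\tfrac{1}{\alpha})\lambda^{-1/\alpha} = \Gamma(1+\tfrac1\alpha)\lambda^{-1/\alpha}$, recovering the same constant. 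I would nevertheless prefer the elementary Riemann-sum argument, as it is self-contained and avoids invoking the machinery of Mellin transforms for what is ultimately a one-line scaling fact.
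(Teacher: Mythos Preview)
Your argument is correct: the monotone integral-comparison sandwich cleanly shows that $\lambda^{1/\alpha}\sum_{n\geq1}e^{-n^\alpha\lambda}$ converges to $\int_0^\infty e^{-x^\alpha}\,dx=\Gamma\bigl(1+\tfrac1\alpha\bigr)$. The paper itself does not give a self-contained proof but simply refers to an external reference for the case $\alpha=2$ and asserts that minor changes handle $\alpha<2$; your Riemann-sum argument is exactly the standard elementary proof one would supply, and it is more explicit than anything the paper provides.
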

\begin{proof} The proof of this lemma for $\alpha=2$ is given in \cite[pages 34-35]{minicourse}, and some minor changes give the proof  for $\alpha<2$.
\end{proof}

Next we introduce the second hypothesis of the paper.
\begin{hp}\label{condition2}
Assume there are $1<\alpha<\beta\leq2$ and $0<C_1<C_2$ such that 
\begin{equation}\label{hypothesis2}
C_1|n|^{\alpha}\leq\Re\varPhi(n)\leq C_2|n|^{\beta}.
\end{equation}
\end{hp}

\begin{crl}
Let $\varPhi(n)$ denotes the L\'evy exponent of a L\'evy process with transition probability $q=q_t(x)$. If $\varPhi$ satisfies Hypothesis {\bf{H\-\ref{condition2}}}, then, for $t\in[0,T]$, there are constants $0<A_1<A_2$ depending on $T$, such that 
\begin{enumerate}
\item For all $t>0$,
\begin{equation}\label{qULB}
A_1t^{-\frac{1}{\beta}}\leq\|q_t\|_{L^2(\T)}^2\leq A_2t^{-\frac{1}{\alpha}};
\end{equation}
\item For any $\delta\in(0,T)$,
\begin{equation}\label{LB}
A_1\delta^{1-\frac{1}{\beta}}\leq\int_0^\delta\|q_t\|_{L^2(\T)}^2dt\leq A_2\delta^{1-\frac{1}{\alpha}}.
\end{equation}
\end{enumerate}
\end{crl}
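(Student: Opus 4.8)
The plan is to reduce both inequalities to the explicit Fourier series for $\|q_t\|_{L^2(\T)}^2$ furnished by~\eqref{L2norm}, and then to sandwich that series between the two comparison sums whose size is governed by~\eqref{est.1}. First I would start from $\|q_t\|_{L^2(\T)}^2=\tfrac{1}{4\pi^2}\sum_{n=-\infty}^\infty e^{-2t\Re\varPhi(n)}$, split off the $n=0$ term (where $\Re\varPhi(0)=0$), and use $\varPhi(-n)=\overline{\varPhi(n)}$, hence $\Re\varPhi(-n)=\Re\varPhi(n)$, to rewrite the series as $\tfrac{1}{4\pi^2}\bigl(1+2\sum_{n\geq1}e^{-2t\Re\varPhi(n)}\bigr)$. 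Hypothesis {\bf H\-\ref{condition2}}, that is~\eqref{hypothesis2}, then yields the two-sided bound $\sum_{n\geq1}e^{-2tC_2 n^\beta}\leq\sum_{n\geq1}e^{-2t\Re\varPhi(n)}\leq\sum_{n\geq1}e^{-2tC_1 n^\alpha}$, so that everything is reduced to estimating $\sum_{n\geq1}e^{-\lambda n^\gamma}$ for $\gamma\in\{\alpha,\beta\}$ and $\lambda>0$.

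The quantitative input is the asymptotic estimate~\eqref{est.1}. For $\gamma\in(1,2]$ I would set $g_\gamma(\lambda):=\lambda^{1/\gamma}\sum_{n\geq1}e^{-\lambda n^\gamma}$; this is continuous on $(0,\infty)$, tends to the positive constant $C$ as $\lambda\to0^+$ by~\eqref{est.1}, and tends to $0$ as $\lambda\to\infty$. Consequently $g_\gamma$ is bounded above on all of $(0,\infty)$ and bounded below by a strictly positive constant on every interval $(0,\Lambda]$. Applying this with $\lambda=2tC_1,\ \gamma=\alpha$ and with $\lambda=2tC_2,\ \gamma=\beta$ gives, for $t\in(0,T]$, estimates of the form $\sum_{n\geq1}e^{-2tC_1 n^\alpha}\leq M\,t^{-1/\alpha}$ and $\sum_{n\geq1}e^{-2tC_2 n^\beta}\geq m\,t^{-1/\beta}$, with $m,M>0$ depending only on $C_1,C_2,\alpha,\beta$ and $T$.

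Assembling these bounds proves part (1). For the upper bound the only nuisance is the additive constant $1$ coming from $n=0$; on $(0,T]$ one absorbs it via $1\leq T^{1/\alpha}t^{-1/\alpha}$, which is precisely the source of the $T$-dependence of $A_2$ (and indeed the stated upper bound must fail as $t\to\infty$, since $\|q_t\|_{L^2(\T)}^2\to\tfrac{1}{4\pi^2}$ while $t^{-1/\alpha}\to0$). The lower bound follows directly from $\|q_t\|_{L^2(\T)}^2\geq\tfrac{1}{2\pi^2}\sum_{n\geq1}e^{-2tC_2 n^\beta}$. Part (2) is then immediate by integrating part (1): since $\alpha,\beta>1$ we have $\int_0^\delta t^{-1/\gamma}\,dt=\tfrac{\gamma}{\gamma-1}\,\delta^{1-1/\gamma}<\infty$, and the prefactors $\tfrac{\alpha}{\alpha-1}$ and $\tfrac{\beta}{\beta-1}$ are absorbed into the constants $A_1,A_2$.

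The step requiring the most care is the passage from the pointwise limit in~\eqref{est.1} to uniform two-sided bounds valid on the whole interval $(0,T]$: the limit by itself only controls the regime $t\to0^+$, so one genuinely needs the continuity-and-compactness argument above, together with the decay of $g_\gamma$ at infinity, to obtain constants that work for every $t\in(0,T]$. Handling the $n=0$ term is what ultimately ties the constants to $T$ and confines the upper estimate to bounded time horizons.
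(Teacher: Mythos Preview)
Your proposal is correct and follows essentially the same route as the paper: both arguments start from the Parseval identity~\eqref{L2norm}, sandwich via Hypothesis~{\bf H\ref{condition2}} to obtain~\eqref{sandwich1}, invoke the asymptotic~\eqref{est.1} to control the behaviour near $t=0$, and then use continuity on the compact interval $[\epsilon,T]$ to extend the bounds; part~(2) follows by integration in both. Your treatment is in fact a bit more careful than the paper's---you isolate the $n=0$ contribution and explain why it forces the $T$-dependence of $A_2$, and you make the compactness step explicit---whereas the paper absorbs the constant term into the two-sided sum and argues somewhat loosely that ``$q_t\neq0$ and is continuous for $t>0$'' to pass from a neighbourhood of $0$ to all of $(0,T)$.
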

\begin{proof}
We prove only the first part; the second part is obtained by integration. ~\eqref{hypothesis2} and~\eqref{L2norm} imply that 
\begin{eqnarray}\label{sandwich1}
\sum_{n=-\infty}^\infty e^{-2tC_2|n|^{\beta}}\leq 4\pi^2\|q_t\|_{L^2(\T)}^2\leq \sum_{n=-\infty}^\infty e^{-2tC_1|n|^{\alpha}}.
\end{eqnarray}
The first inequality in~\eqref{sandwich1} implies that 
\begin{equation}\label{t^beta}
t^{1/\beta}\|q_t\|_{L^2(\T)}^2\geq\frac{1}{(2C_2)^{1/\beta}4\pi^2} (2tC_2)^{1/\beta}\sum_{n=-\infty}^\infty e^{-2tC_2|n|^{\beta}}.
\end{equation}
By~\eqref{est.1} the right-hand-side of~\eqref{t^beta} converges to a number $B_1>0$. 
Therefore there is an $\epsilon_1>0$ such that 
\begin{equation*}
t^{1/\beta}\|q_t\|_{L^2(\T)}^2\geq B_1/2\hspace{.5 in}\forall t\in(0,\epsilon_1).
\end{equation*}
To extend the inequality to $t\in(0,T)$, we note that by ~\eqref{est.1}, $q_t\neq0$ and is continuous for $t>0$. Therefore, there is $A_1>0$ such that 
\begin{equation}\label{A_1}
t^{1/\beta}\|q_t\|_{L^2(\T)}^2\geq A_1 \hspace{.5 in}\forall t\in(0,T).
\end{equation}
Similarly, the second inequality in~\eqref{sandwich1} implies that there is $A_2$ such that  
\begin{equation}\label{A_2}
t^{1/\beta}\|q_t\|_{L^2(\T)}^2\leq A_2\hspace{.5 in}\forall t\in(0,T).
\end{equation}
~\eqref{A_1} and ~\eqref{A_2} imply ~\eqref{qULB}.
\end{proof}
\subsection{The mild solution, existence and uniqueness}
Equation~\eqref{A SPDE} is formal; we interpret it, in the Walsh sense, as the solution to the integral equation 
\begin{equation}\label{intequ}
u(t,x)=v(t,x)+\intT\int_0^t\sigma(u(s,y))q_{t-s}(y-x)\-w(ds\-dy),
\end{equation}
where the integral on the right is with respect to white noise, which also defines the a filtration $\{\sF_t\}_{t\geq0}$ via
$$\sF_t=\sigma\left(\dot w([0,s]\times A),\;0\leq s\leq t, \text{and}\;A\in\sB(\T),\; \lambda(A)<\infty\right),$$
where $\lambda$ is the Lebesgue measure on $\T$, normalized to have mass $1$. 
In~\eqref{intequ} a solution $u$ satisfying ~\eqref{intequ} is called the mild solution to ~\eqref{A SPDE}, if 
$$\sup_{(t,x)\in[0,T]\times\T}\E(|u(t,x)|^2)<\infty\hspace{.4 in}\text{for all}\;T<\infty.$$
We assume $\sF_t$ satisfies the usual condition for all $t\geq0$.
To introduce and analyze the properties of the solution, we introduce the following family of semi-norms: For any $T\in[0,\infty]$ and integers $p\geq 2$ define
\begin{equation}\label{norm}
\|f\|_{\beta,p, T}:=\left\{\sup_{0\leq t\leq T}\sup_{0\leq x\leq 2\pi}e^{-\beta t}\E(|f(t,x)|^p)\right\}^{1/p}.
\end{equation}
When $T=\infty$, we write $\|f\|_{\beta,p}$ instead of $\|f\|_{\beta,p,\infty}$.

\begin{rmk}
In this paper there are two families of semi-norms, indexed by two parameters. One is defined in ~\eqref{norm}, when $T=\infty$, which always comes with parameter $\beta$, and the other one is the norm  $\|\cdot\|_{k,p}$, on space $\mathbf{D}^{k,p}$, which is indexed by integers such as $k, m, n$ etc., and $p$.  For the sake of clarity, in sequel, when these two norms are both used, we will use a new notation defined by ~\eqref{Gamtx}.
\end{rmk}

\begin{thm}\label{existence}
Under the hypothesis {\bf{H\-\ref{condition0}}},~\eqref{A SPDE} has a solution $u$ that is unique up to a modification. The solution is bounded in $\|\cdot\|_{p,\beta}$ norm, for some some $\beta>0$, and all $p\geq 2$. Furthermore, when $u_0$ is continuous, $u$ is continuous in $L^p(\P)$ for all $p>0$.
\end{thm}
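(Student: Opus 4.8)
The plan is to construct the solution by the standard Picard iteration scheme adapted to the seminorms $\|\cdot\|_{\beta,p}$ defined in \eqref{norm}, using the weight $e^{-\beta t}$ to absorb the growth coming from the stochastic integral. First I would set $u_0(t,x):=v(t,x)$, the solution of the homogeneous equation, and define recursively
\begin{equation*}
u_{n+1}(t,x)=v(t,x)+\intT\int_0^t\sigma(u_n(s,y))\,q_{t-s}(y-x)\,w(ds\,dy).
\end{equation*}
Since $u_0$ is bounded and $v$ is given by the semigroup $T_t$ acting on $u_0$, one checks that $\|v\|_{\beta,p}<\infty$ for every $\beta>0$ and $p\geq2$. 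The core estimate is a contraction-type bound: applying Burkholder's inequality to the Walsh stochastic integral and using the Lipschitz (indeed $C_b^\infty$) property of $\sigma$, I would show
\begin{equation*}
\E\bigl(|u_{n+1}(t,x)-u_n(t,x)|^p\bigr)\leq C_p\,\E\Bigl(\Bigl|\intT\int_0^t|\sigma(u_n)-\sigma(u_{n-1})|^2\,q_{t-s}(y-x)^2\,ds\,dy\Bigr|^{p/2}\Bigr),
\end{equation*}
and then bound the right-hand side by $\lip(\sigma)^p$ times a factor controlled through $\int_0^t\|q_s\|_{L^2(\T)}^2\,ds$.

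The key mechanism is Lemma~\ref{supineq}, which gives $\sup_{t>0}e^{-\beta t}\int_0^t\|q_s\|_{L^2(\T)}^2\,ds\leq\Upsilon(\beta)$ together with the fact (noted after \eqref{phi}, and guaranteed by Hypothesis~{\bf H\-\ref{condition0}}) that $\Upsilon(\beta)\to0$ as $\beta\to\infty$. After multiplying by $e^{-\beta t}$ and taking suprema over $(t,x)$, the recursion collapses to
\begin{equation*}
\|u_{n+1}-u_n\|_{\beta,p}^p\leq C_p\,\lip(\sigma)^p\,\Upsilon(\beta)^{p/2}\,\|u_n-u_{n-1}\|_{\beta,p}^p.
\end{equation*}
Choosing $\beta$ large enough that $C_p\,\lip(\sigma)^p\,\Upsilon(\beta)^{p/2}<1$ makes the map a strict contraction in the Banach space of processes with finite $\|\cdot\|_{\beta,p}$ norm, so the iterates converge to a limit $u$ that satisfies \eqref{intequ} and has $\|u\|_{\beta,p}<\infty$; the boundedness of $\sup_{(t,x)\in[0,T]\times\T}\E|u(t,x)|^2$ for each $T<\infty$ follows by undoing the exponential weight on the finite interval $[0,T]$.

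For uniqueness I would apply the same Burkholder estimate to the difference of two putative solutions $u$ and $\tilde u$, obtaining $\|u-\tilde u\|_{\beta,p}^p\leq C_p\,\lip(\sigma)^p\,\Upsilon(\beta)^{p/2}\,\|u-\tilde u\|_{\beta,p}^p$ for $\beta$ large; since the contraction constant is strictly less than $1$ and both norms are finite, this forces $\|u-\tilde u\|_{\beta,p}=0$, hence $u=\tilde u$ up to a modification. Finally, for $L^p(\P)$-continuity when $u_0$ is continuous, I would estimate $\E|u(t,x)-u(t',x')|^p$ by splitting into the contribution of $v$, which is continuous because $T_t u_0$ inherits continuity from $u_0$ and the semigroup, and the stochastic-integral contribution, which is controlled by the modulus of continuity of $(t,x)\mapsto q_{t-s}(y-x)$ in the $L^2(ds\,dy)$ sense together with the already-established uniform moment bound on $u$. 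The main obstacle I expect is the last step: unlike the heat kernel, the L\'evy transition density $q_t$ is only known abstractly through its Fourier coefficients $e^{-t\varPhi(n)}$, so establishing the requisite continuity estimates for the space-time increments of $q$ requires careful use of the Fourier representation \eqref{qfourier} and the growth bound \eqref{hype1} rather than explicit kernel manipulations; since the present work follows \cite{FK} closely here, I would import those estimates rather than reprove them.
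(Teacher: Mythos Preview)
Your proposal is correct and follows essentially the same route as the paper: Picard iteration in the weighted norm $\|\cdot\|_{\beta,p}$, Burkholder's inequality for the stochastic integral, and Lemma~\ref{supineq} together with $\Upsilon(\beta)\to0$ to obtain a contraction for large $\beta$. In fact the paper's own proof here is only a sketch that sets up the iteration \eqref{picard} and then defers to Theorems~2.1 and~A.1 of \cite{FK}; what you have written is precisely the content of that reference, so you have supplied more detail than the paper does (the only cosmetic slip is that the contraction constant should be $\Upsilon(2\beta/p)$ rather than $\Upsilon(\beta)$, as one sees by tracking the exponent in the weight---compare the computation leading to \eqref{G^1bd}---but this is immaterial since both vanish as $\beta\to\infty$).
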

\begin{proof}
It is easy to check that, if we substitute $t=0$, then $u(0,x)=u(x)$, and also, if the solution exists, then $u(t,2\pi)=u(t,0)$.
Notice that if $v$ is defined by
\begin{equation}
v(t,x)=T_tu_0(x)\hspace{.2 in}(t,x)\in E,
\end{equation}
then $v$ satisfies $\partial_t v(t,x)=\sL v(t,x)$ weakly. Furthermore, the periodic condition $v(t,0)=v(t,2\pi)$ on $\T$ and the initial condition $v(0,x)=u_0(x)$ are satisfied.
That is $v$ is the Green's function for the operator $\partial_t-\sL$.  We consider the following Picard iteration. Define $v_0(t,x)=T_tu_0(x),$ and for $n\geq1$ set
\begin{equation}\label{picard}
v_{n+1}(t,x)=v_0(t,x)+\int_0^t\intT q_{t-r}(x,z)\sigma(v_n(r,z))\-w(dr dz).
\end{equation}
Then the existence of of the solution boils down to the convergence of $v_n$. We know for example, that $\lim_{n\to\infty}\|v_n-u\|_{\beta,p}=0$. For more details check Theorems 2.1 and A.1 in \cite{FK}.
\end{proof}

\section{Elements of Malliavin's calculus}
Most of this section is borrowed from \cite{Nualart} and \cite{Sanz-Sole}. Let  $C_p^\infty(\R^n)$ denote the space of the smooth real-valued functions $f$ on $\R^n$, such that $f$ and all its partial derivatives have at most polynomial growth. 

Let $\cS$ denote the class of smooth random variables such that a random variable $F\in\cS$ has the form
$F=f(w(h_1),\cdots,w(h_n),$
where $f\in C_p^\infty(\R^b)$, $h_i\in H:=L^2(E_T)$, and $w(h_i)$ denotes the Wiener integral
$$w(h_i)=\int_0^T\intT h_i(t,x)\-w(dx\;dt),\;\;1\leq i\leq n.$$
A stochastic process $u=\{u(t,x), t\geq 0, 0\leq x\leq 2\pi\}$ is called $adapted$ if $u(t,x)$ is $\sF_t$ measurable for any $(t\geq 0,x)\in E_T$.

Fix a (finite or infinite) time interval $[0,t]$, and denote by $L_a^2([0,t]\times\Omega)$ the set of square integrable and adapted processes.
The Skorohod integral is an extension of the It\^o integral in the following sense:
\begin{proposition}
 $L^2_a\subset\mathrm{Dom} \delta$, and the operator $\delta$ resricted to $L^2_a$ coincides with the It\^o integral, that is \begin{equation}
\delta(u)=\int_0^t\intT u(t,x)\-w(dx\;dt).
\end{equation}
\end{proposition}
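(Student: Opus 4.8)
The plan is to verify the duality relation that \emph{defines} $\delta$ as the adjoint of the Malliavin derivative $D$, first on elementary adapted processes and then by a density and closedness argument. Recall that $u\in\Dom\delta$ with $\delta(u)=G$ means precisely that $\E[F\,G]=\E[\langle DF,u\rangle_H]$ for every smooth $F\in\cS$. So I would begin with the class of simple adapted processes, i.e. finite sums
\begin{equation}
u(s,y)=\sum_j F_j\,\1_{(t_j,t_{j+1}]}(s)\,\1_{A_j}(y),
\end{equation}
where $A_j\in\sB(\T)$ and each coefficient $F_j$ is a smooth random variable measurable with respect to $\sF_{t_j}$ (a smooth function of Wiener integrals $w(h_i)$ of kernels $h_i$ supported on $[0,t_j]\times\T$). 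These are dense in $L^2_a([0,t]\times\Omega)$, and for them the It\^o integral is simply the elementary sum $I(u)=\sum_j F_j\,w(\1_{\Delta_j})$ with $\Delta_j:=(t_j,t_{j+1}]\times A_j$.

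Next I would establish the duality directly for such $u$. Applying the basic integration-by-parts identity $\E[G\,w(h)]=\E[\langle DG,h\rangle_H]$ with $G=FF_j\in\cS$ and $h=\1_{\Delta_j}$, the product rule $D(FF_j)=F_j\,DF+F\,DF_j$ gives
\begin{equation}
\E[F\,F_j\,w(\Delta_j)]=\E\big[F_j\langle DF,\1_{\Delta_j}\rangle_H\big]+\E\big[F\langle DF_j,\1_{\Delta_j}\rangle_H\big].
\end{equation}
The crucial point --- and the only place adaptedness is used --- is that $\langle DF_j,\1_{\Delta_j}\rangle_H=\int_{\Delta_j}D_{s,y}F_j\,ds\,dy=0$, because $F_j$ is a smooth function of Wiener integrals of kernels supported on $[0,t_j]\times\T$, so its Malliavin derivative $D_{s,y}F_j$ vanishes for $s>t_j$, whereas $\Delta_j$ involves only times $s>t_j$. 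Summing over $j$ then yields $\E[F\,I(u)]=\E[\langle DF,u\rangle_H]$, which is exactly the defining relation for $\delta$; hence $u\in\Dom\delta$ and $\delta(u)=I(u)$ on simple adapted processes.

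Finally I would pass to a general $u\in L^2_a$ by approximation. Choosing simple adapted $u_n\to u$ in $L^2([0,t]\times\Omega)$, the It\^o isometry shows $I(u_n)\to I(u)$ in $L^2(\Omega)$, while the previous step gives $\delta(u_n)=I(u_n)$ for each $n$. Since $\delta$, being an adjoint, is a closed operator, the convergences $u_n\to u$ and $\delta(u_n)\to I(u)$ force $u\in\Dom\delta$ and $\delta(u)=I(u)$, which is the asserted identity.

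I expect the main obstacle to be the adaptedness step, namely the vanishing of $\langle DF_j,\1_{\Delta_j}\rangle_H$: this is the conceptual heart of why the Skorohod and It\^o integrals agree, and it rests on the fact that an $\sF_{t_j}$-measurable variable has Malliavin derivative supported on $[0,t_j]\times\T$. By restricting attention to smooth elementary coefficients this becomes transparent, so the genuine remaining work is the density of simple adapted processes in $L^2_a$ together with the closedness of $\delta$ needed to justify the limit; both are routine but must be invoked carefully to close the argument.
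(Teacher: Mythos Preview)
Your argument is correct and is essentially the standard proof of this fact (as in Nualart's book). Note, however, that the paper does not actually prove this proposition: Section~3 is an expository review ``borrowed from \cite{Nualart} and \cite{Sanz-Sole},'' and this statement is quoted without proof. So there is no paper-proof to compare against; your sketch simply reproduces the classical argument from the cited references.
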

\begin{prp}\label{iff}
Suppose $F\in L^2(\Omega)$, and let $J_n$ denote the projection to the $n$th Wiener chaos. $F\in \mathbf D^{k,2}$ if and only if 
\begin{equation}
\sum_{n=1}^\infty n^k\|J_nF\|_{L^2(\Omega)}^2<\infty.
\end{equation}
\end{prp}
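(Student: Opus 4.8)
The plan is to exploit the Wiener chaos decomposition together with the explicit action of the Malliavin derivative on each chaos, and then to convert membership in $\mathbf D^{k,2}$ into a summability condition using the closedness of the operator $D^k$.

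First I would recall that every $F\in L^2(\Omega)$ admits the orthogonal expansion $F=\sum_{n=0}^\infty J_nF$, and that each projection is a multiple Wiener–It\^o integral $J_nF=I_n(f_n)$ with $f_n\in H^{\otimes n}$ symmetric, obeying the isometry $\|J_nF\|^2_{L^2(\Omega)}=n!\,\|f_n\|^2_{H^{\otimes n}}$. The key building block is the derivative formula on a single chaos, $DI_n(f_n)=n\,I_{n-1}(f_n)$, where $f_n$ is read as a function of $n-1$ integration variables with one free variable in $H$. Iterating $k$ times gives, for $n\geq k$,
\[
D^kI_n(f_n)=\frac{n!}{(n-k)!}\,I_{n-k}(f_n),
\]
viewed as an element of $L^2(\Omega;H^{\otimes k})$, and $D^kI_n(f_n)=0$ for $n<k$.

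Next I would compute the relevant norm. Applying the isometry for the $(n-k)$th chaos while integrating over the $k$ free variables yields
\[
\|D^kI_n(f_n)\|^2_{L^2(\Omega;H^{\otimes k})}=\left(\frac{n!}{(n-k)!}\right)^{2}(n-k)!\,\|f_n\|^2_{H^{\otimes n}}=\frac{n!}{(n-k)!}\,\|J_nF\|^2_{L^2(\Omega)}.
\]
Moreover, for $n\neq m$ the elements $D^kI_n(f_n)$ and $D^kI_m(f_m)$ are orthogonal in $L^2(\Omega;H^{\otimes k})$: for each fixed value of the $H^{\otimes k}$–argument the two random variables lie in the distinct chaoses $\sH_{n-k}$ and $\sH_{m-k}$, hence are orthogonal in $L^2(\Omega)$, so the cross term vanishes after integrating out the free variables.

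Finally I would invoke closability. Each finite chaos $J_nF=I_n(f_n)$ belongs to $\mathbf D^{k,2}$ with $D^kJ_nF$ given by the formula above, so the truncations $F_N=\sum_{n=0}^N J_nF$ lie in $\mathbf D^{k,2}$ and satisfy $F_N\to F$ in $L^2(\Omega)$. By the orthogonality just established, $\{D^kF_N\}_N$ is Cauchy in $L^2(\Omega;H^{\otimes k})$ if and only if $\sum_n \frac{n!}{(n-k)!}\|J_nF\|^2<\infty$. Since $D^k$ is a closed operator, this convergence is equivalent to $F\in\mathbf D^{k,2}$, in which case $D^kF=\sum_n D^kJ_nF$. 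Because $\frac{n!}{(n-k)!}=n(n-1)\cdots(n-k+1)$ is bounded above and below by constant multiples of $n^k$ for $n\geq k$, the series $\sum_n\frac{n!}{(n-k)!}\|J_nF\|^2$ converges exactly when $\sum_n n^k\|J_nF\|^2$ does, which is the stated criterion. The algebraic identities are routine once the single-chaos derivative formula is in hand; the step requiring genuine care is the orthogonality/closedness bookkeeping that licenses passing from convergence of the truncated derivatives $D^kF_N$ to actual membership in the domain of $D^k$.
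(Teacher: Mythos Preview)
The paper does not actually prove this proposition: it is stated in Section~3 as one of the standard facts ``borrowed from \cite{Nualart} and \cite{Sanz-Sole}'', with no accompanying argument. Your proof is the standard one found in those references (essentially Proposition~1.2.2 in Nualart's book iterated $k$ times), and it is correct. One small point worth tightening: the closedness of $D^k$ cleanly delivers the ``if'' direction (Cauchy $D^kF_N$ plus $F_N\to F$ forces $F\in\mathbf D^{k,2}$), but for the ``only if'' direction you are implicitly using that whenever $F\in\mathbf D^{k,2}$ the chaos expansion of $D^kF$ is obtained termwise, i.e.\ $D^kF=\sum_n D^kJ_nF$; this is true but is itself a consequence of the derivative formula on a single chaos together with a density/duality argument, not of closedness alone. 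Since the paper offers nothing to compare against, there is no methodological divergence to discuss.
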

To apply the Malliavin calculus to our problem, we usually need to compute the Malliavin derivatives of the integrals. In this regard the following proposition is useful. 
\begin{proposition}
If $u\in \mathbf D^{1,2}$ and $D_{t,x}u\in\Dom\-\delta$, then
\begin{equation}
D_{t,x}\delta(u)=\delta(D_{t,x}u)+u(t,x).
\end{equation}
\end{proposition}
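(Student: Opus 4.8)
The plan is to establish this as the standard commutation relation between the Malliavin derivative $D$ and its adjoint $\delta$, proving it first for a dense class of elementary processes and then extending by a closability argument. The starting point is the elementary identity for the divergence of a single term: if $F\in\cS$ and $h\in H$, then
\[
\delta(Fh)=F\,w(h)-\langle DF,h\rangle_H.
\]
This follows directly from the duality that characterizes $\delta$ as the adjoint of $D$, namely $\E[G\,\delta(u)]=\E[\langle DG,u\rangle_H]$ for $G\in\cS$. Any smooth elementary process can be written as a finite sum $u=\sum_{j=1}^n F_j h_j$ with $F_j\in\cS$ and $h_j\in H$, so by linearity $\delta(u)=\sum_j\bigl(F_j\,w(h_j)-\langle DF_j,h_j\rangle_H\bigr)$.

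For such a $u$ I would differentiate $\delta(u)$ directly. Using the product rule and $D_{t,x}w(h_j)=h_j(t,x)$,
\[
D_{t,x}\delta(u)=\sum_j (D_{t,x}F_j)\,w(h_j)+\sum_j F_j\,h_j(t,x)-\sum_j D_{t,x}\langle DF_j,h_j\rangle_H.
\]
The middle sum is exactly $u(t,x)$. On the other hand, since $D_{t,x}u=\sum_j (D_{t,x}F_j)\,h_j$ is again a smooth elementary process, the same divergence formula gives
\[
\delta(D_{t,x}u)=\sum_j (D_{t,x}F_j)\,w(h_j)-\sum_j \langle D(D_{t,x}F_j),h_j\rangle_H.
\]
Comparing the two displays, the desired identity reduces to the equality $D_{t,x}\langle DF_j,h_j\rangle_H=\langle D(D_{t,x}F_j),h_j\rangle_H$ for each $j$, which is nothing but the symmetry of the second Malliavin derivative $D_{t,x}D_{s,y}F_j=D_{s,y}D_{t,x}F_j$ (interchangeable because $F_j$ is a smooth function of finitely many Wiener integrals). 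This settles the elementary case.

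The remaining and genuinely technical step is the passage to general $u$. I would choose a sequence of smooth elementary processes $u_n$ approximating $u$ in $\mathbf D^{1,2}$, so that $u_n\to u$ and $Du_n\to Du$ in $L^2(\Omega;H)$ and $L^2(\Omega;H^{\otimes 2})$ respectively; the hypothesis $u\in\mathbf D^{1,2}$ is precisely what makes such an approximation available. One then wants to let $n\to\infty$ in the identity $D_{t,x}\delta(u_n)=\delta(D_{t,x}u_n)+u_n(t,x)$. The left side is controlled through the closedness of $D$ together with $\delta(u_n)\to\delta(u)$, and the $u_n(t,x)$ term converges with $u_n$. The delicate point, and the \emph{main obstacle}, is the convergence of $\delta(D_{t,x}u_n)$ to $\delta(D_{t,x}u)$: this requires the extra hypothesis $D_{t,x}u\in\Dom\delta$ and a careful use of the closedness of $\delta$, so that the limit of the divergences of the approximating derivatives is genuinely the divergence of the limiting derivative and no spurious boundary term appears. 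Handling all of these limits in the correct topology, as an $H$-valued identity in $L^2(\Omega)$, is where the care must go; the algebraic heart of the statement is entirely contained in the elementary computation above.
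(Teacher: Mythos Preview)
The paper does not actually prove this proposition: Section~3 is explicitly a review of standard Malliavin calculus facts ``borrowed from \cite{Nualart} and \cite{Sanz-Sole},'' and this commutation relation is stated there without proof, as a known result. Your argument is the standard one found in those references (see in particular Nualart, Proposition~1.3.8): verify the identity on smooth elementary processes via the formula $\delta(Fh)=F\,w(h)-\langle DF,h\rangle_H$ and the symmetry of $D^2F$, then pass to the limit using the closedness of $D$ and $\delta$. So your proposal is correct and matches the textbook proof the paper is implicitly citing; there is nothing to compare against in the paper itself.
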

\begin{proposition}\label{bddD}
Let $\{F_n,\;n\geq 1\}$ be a sequence of random variable converging to $F$ in $L^p(\Omega)$ for some $p>1$. Suppose that for some $k\geq 1$,
\begin{equation}
\sup_{n\geq 1}\|F_n\|_{k,p}<\infty.
\end{equation}
Then $F$ belongs to $\mathbf D^{k,p}$, and the sequence of derivatives $\{D^kF_n,\;n\geq 1\}$ converges to $D^kF$ in the weak topology of $L^p(\Omega; H)$.
\end{proposition}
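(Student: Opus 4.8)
The plan is to combine the reflexivity of $L^p(\Omega;H^{\otimes k})$ for $p>1$ with the closedness of the iterated Malliavin derivative $D^k$. First I would note that the assumption $\sup_{n}\|F_n\|_{k,p}<\infty$ bounds in particular the sequence $\{D^kF_n\}$ in $L^p(\Omega;H^{\otimes k})$, since $\|\cdot\|_{k,p}$ dominates $\{\E\|D^kF_n\|_{H^{\otimes k}}^p\}^{1/p}$. Because $p>1$ the space $L^p(\Omega;H^{\otimes k})$ is reflexive, so by Banach--Alaoglu together with the Eberlein--\v{S}mulian theorem there is a subsequence $\{D^kF_{n_i}\}$ that converges weakly to some $\alpha\in L^p(\Omega;H^{\otimes k})$.

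The crux of the argument --- and the main obstacle --- is to show that $F\in\mathbf D^{k,p}$ with $D^kF=\alpha$, i.e. to identify the weak limit as a genuine derivative. For this I would invoke the closability of $D$, equivalently the closedness of the operator $D^k$ on its domain $\mathbf D^{k,p}$; this is the foundational fact of the theory and rests on the integration-by-parts formula $\E[\langle DG,h\rangle_H]=\E[G\,\delta(h)]$ relating $D$ and the Skorohod integral $\delta$. Closedness means that the graph $\mathcal G:=\{(G,D^kG):G\in\mathbf D^{k,p}\}$ is a closed linear subspace of $L^p(\Omega)\times L^p(\Omega;H^{\otimes k})$; by Mazur's theorem a closed subspace of a Banach space is weakly closed. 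Now each pair $(F_{n_i},D^kF_{n_i})$ lies in $\mathcal G$, and since $F_{n_i}\to F$ strongly (hence weakly) in $L^p(\Omega)$ while $D^kF_{n_i}\to\alpha$ weakly, the pair $(F,\alpha)$ is a weak limit of elements of $\mathcal G$. Weak closedness then forces $(F,\alpha)\in\mathcal G$, giving simultaneously $F\in\mathbf D^{k,p}$ and $D^kF=\alpha$. Alternatively, one can identify $\alpha$ directly by testing against $\delta^k(Gh)$ for smooth $G\in\cS$ and $h\in H^{\otimes k}$ and passing to the limit in the duality pairing, using the strong convergence of $F_{n_i}$.

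Finally, to promote the conclusion from the subsequence to the full sequence, I would run the standard subsequence argument: any subsequence of $\{D^kF_n\}$ is again bounded in the reflexive space $L^p(\Omega;H^{\otimes k})$, hence has a further weakly convergent subsequence whose limit, by the identification just established, must again equal $D^kF$. Since every subsequence admits a further subsequence converging weakly to the same limit $D^kF$, the entire sequence $\{D^kF_n\}$ converges weakly to $D^kF$. The only genuinely delicate input is the closability of $D$; everything else is soft functional analysis, so I expect the write-up to be short once that fact is cited from the references used in this section.
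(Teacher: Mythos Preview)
Your argument is correct and is essentially the standard proof of this result (see, e.g., Lemma~1.5.3 in Nualart's book). Note, however, that the paper does not give its own proof of this proposition: Section~3 explicitly states that its contents are ``borrowed from \cite{Nualart} and \cite{Sanz-Sole}'', and Proposition~\ref{bddD} is simply quoted as a known fact without proof. So there is nothing in the paper to compare against; your write-up supplies exactly the argument one finds in the cited references.
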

\begin{remark}\label{L^1,2}
The space $\mathbf D^{1,2}(L^2(T))$, denoted by $\L^{1,2}$, coincides with the class of processes $u\in L^2(T\times\Omega)$ such that $u(t)\in \mathbf D^{1,2}$ for almost all $t\in T$, and there exists a measurable version of the two-parameter process $D_su_t$ verifying $\E\int_T\int_T (D_su_t)^2\mu(ds)\mu(dt)<\infty$. This space is included in $\Dom\-\delta$.
\end{remark}
\begin{proposition}\label{derivative}
Suppose that $u\in \L^{1,2}$. Furthermore assume that the following two conditions are satisfied:
\begin{enumerate}
\item For almost all $(s,y)\in E$ the process $\{D_{s,y}u(r,z), (r,z)\in E \}$ is Skorohod integrable;
\item There is a version of the process 
$$\left\{\intT\int_0^TD_{s,y}u(r,z)\-W(dr,dz), (s,y)\in E\right\},$$ 
which is in $L^2(\Omega\times E)$.
\end{enumerate}
Then $\delta(u)\in \mathbf D^{1,2}$ and we have
\begin{equation}\label{Dsigma}
D_{s,y}(\delta(u))=u(s,y)+\intT\int_0^TD_{s,y}u(r,z)\-W(dr,dz).
\end{equation}
\end{proposition}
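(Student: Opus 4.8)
\emph{Approach.} The statement is the standard commutation relation between the Skorohod integral $\delta$ and the Malliavin derivative $D$, and the plan is to establish it through the duality (adjoint) characterization of $\mathbf D^{1,2}$. Introduce the candidate derivative as the $H$-valued random variable
\[
\Phi(s,y):=u(s,y)+\intT\int_0^T D_{s,y}u(r,z)\-W(dr,dz),\qquad (s,y)\in E_T,
\]
whose second term is $\delta(D_{s,y}u)$ and is well defined for almost every $(s,y)$ by condition~(1). Condition~(2), together with $u\in\L^{1,2}\subset L^2(\Omega\times E_T)$, guarantees $\Phi\in L^2(\Omega;H)$. Because $D$ is closed and densely defined and $\delta$ is its adjoint, the conclusion $\delta(u)\in\mathbf D^{1,2}$ with $D\delta(u)=\Phi$ follows as soon as $\Phi\in L^2(\Omega;H)$ and the identity
\[
\E\big[\langle\Phi,v\rangle_H\big]=\E\big[\delta(u)\,\delta(v)\big]
\]
holds for every $v$ in the dense class of smooth elementary processes $v=\sum_j G_j h_j$ with $G_j\in\cS$ and $h_j\in H$.

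\emph{Matching the two sides.} For the right-hand side I would invoke the covariance formula for Skorohod integrals, valid since $u,v\in\L^{1,2}$:
\[
\E\big[\delta(u)\,\delta(v)\big]=\E\big[\langle u,v\rangle_H\big]+\E\int_{E_T}\int_{E_T} D_{s,y}u(r,z)\,D_{r,z}v(s,y)\,ds\,dy\,dr\,dz .
\]
For the left-hand side, I would expand $\Phi$ and treat its two terms separately. The first contributes $\E[\langle u,v\rangle_H]$ at once. For the second I would apply the duality relation $\E[G\,\delta(w)]=\E[\langle DG,w\rangle_H]$ pointwise in $(s,y)$, taking $G=v(s,y)\in\cS$ and $w=\{D_{s,y}u(r,z)\}_{(r,z)\in E_T}\in\Dom\delta$ (here condition~(1) is exactly what is needed), obtaining $\E[v(s,y)\,\delta(D_{s,y}u)]=\E\int_{E_T}D_{r,z}v(s,y)\,D_{s,y}u(r,z)\,dr\,dz$. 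Integrating over $(s,y)$ and applying Fubini reproduces precisely the double integral above, so the two sides coincide and the proof closes.

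\emph{Main obstacle and an alternative.} The conceptual content is routine once these tools are in place; the real work is the measure-theoretic bookkeeping. The principal difficulty is justifying the interchanges of expectation, of $ds\,dy$-integration, and of the pointwise Skorohod integral: one must verify that $(s,y)\mapsto\delta(D_{s,y}u)$ is jointly measurable and square-integrable, which is exactly the content of conditions~(1) and~(2) and what legitimizes placing $\Phi$ in $L^2(\Omega;H)$ and each Fubini exchange above. A clean alternative that bypasses the adjoint characterization is to argue through the Wiener chaos expansion: writing $u(s,y)=\sum_n I_n\!\big(f_n(\cdot,s,y)\big)$ gives $\delta(u)=\sum_n I_{n+1}(\widetilde f_n)$, and the rule $D_{s,y}I_{n+1}(\widetilde f_n)=(n+1)I_n\!\big(\widetilde f_n(\cdot,s,y)\big)$ lets one match, chaos by chaos, the expansion of $D_{s,y}\delta(u)$ with that of $u(s,y)+\delta(D_{s,y}u)$; in this route Proposition~\ref{iff} supplies the summability criterion encoding $\delta(u)\in\mathbf D^{1,2}$, and condition~(2) is what forces the relevant series to converge.
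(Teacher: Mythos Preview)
The paper does not supply its own proof of this proposition: Section~3 is an expository review ``borrowed from \cite{Nualart} and \cite{Sanz-Sole}'', and Proposition~\ref{derivative} is stated there without proof as a standard fact from Malliavin calculus. So there is no in-paper argument to compare against.

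That said, your proposal is sound and follows one of the two standard textbook routes. The duality argument you outline---identifying the candidate $\Phi$, invoking the covariance identity $\E[\delta(u)\delta(v)]=\E\langle u,v\rangle_H+\E\!\int\!\!\int D_{s,y}u(r,z)\,D_{r,z}v(s,y)$ for $u,v\in\L^{1,2}$, and matching it against $\E\langle\Phi,v\rangle_H$ via the pointwise duality $\E[G\,\delta(w)]=\E\langle DG,w\rangle_H$---is exactly how one verifies $\delta(u)\in\Dom(\delta^{*})=\mathbf D^{1,2}$. The only point to be a bit more explicit about is why testing against smooth elementary $v=\sum_j G_jh_j$ suffices: you need that such processes form a core for $\delta$ (dense in $\Dom\delta$ for the graph norm), which is standard but worth stating. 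Your alternative via chaos expansions is precisely the proof given in Nualart's monograph (Proposition~1.3.8 in the second edition), and is arguably cleaner because the convergence criterion it needs is exactly Proposition~\ref{iff} already recorded in the paper.
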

If $F=(F^1,\cdots,F^n)$ is a random vector with $F^i\in\mathbf D^{1,1}$, then we define its Malliavin matrix $\gamma_F$ to be 
$$\gamma_F=(\langle F^i,F^j\rangle)_{1\leq i,j\leq m}.$$
Before we state the main theorem we introduce the following.
\begin{definition}
We say that a random vector $F=(F^1,\cdots,F^n)$ is nondegenerate if it satisfies the following conditions:
\begin{enumerate}
\item $F^i\in \mathbf D^\infty$ for all $i=1,\cdots,m;$
\item The matrix $\gamma_F$ satisfies $\E[det\gamma_F)^{-p}]<\infty$ for all $p\geq 2$.
\end{enumerate}
\end{definition}
The following is a key result.
\begin{thm}
If $F=(F^1,\cdots,F^n)$ is a nondegenerate random vector, then the law of $F$ possesses an infinitely-differentiable density. 
\end{thm}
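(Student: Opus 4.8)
The plan is to prove the theorem by the classical Malliavin integration-by-parts machinery, obtaining the density together with all its derivatives as explicit expectations of orthant indicators weighted by iterated integration-by-parts constants. The one and only place where nondegeneracy is used is in the $L^p$-control of those weights.

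First I would establish the first-order integration by parts formula: for every $\varphi\in C_p^\infty(\R^n)$ and every index $i$ there is a weight $H_i\in\bigcap_{p>1}L^p(\Omega)$ with $\E[\partial_i\varphi(F)]=\E[\varphi(F)\,H_i]$. The derivation runs as follows. The chain rule for the Malliavin derivative gives $D(\varphi(F))=\sum_{j=1}^n\partial_j\varphi(F)\,DF^j$, so pairing against $DF^k$ in $H$ yields
\begin{equation*}
\langle D(\varphi(F)),DF^k\rangle_H=\sum_{j=1}^n \partial_j\varphi(F)\,\gamma_F^{jk},\qquad \gamma_F^{jk}:=\langle DF^j,DF^k\rangle_H.
\end{equation*}
Since $F$ is nondegenerate, $\gamma_F$ is a.s.\ invertible; multiplying by $(\gamma_F^{-1})^{ik}$, summing over $k$, and using symmetry of $\gamma_F$ recovers $\partial_i\varphi(F)=\langle D(\varphi(F)),u_i\rangle_H$ with $u_i:=\sum_{k}(\gamma_F^{-1})^{ik}\,DF^k$. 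Taking expectations and invoking the duality that defines the divergence $\delta$ as the adjoint of $D$, namely $\E[\langle DG,u\rangle_H]=\E[G\,\delta(u)]$ valid once $u_i\in\Dom\,\delta$, identifies $H_i=\delta(u_i)$.

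Next I would iterate: composing the first-order identity $k$ times produces, for every multi-index $\alpha$ with $|\alpha|=k$, a weight $H_\alpha\in\bigcap_{p>1}L^p(\Omega)$ with $\E[\partial_\alpha\varphi(F)]=\E[\varphi(F)\,H_\alpha]$. Then I deduce the density and its smoothness. For existence, write a compactly supported $\psi$ as $\psi=\partial_1\cdots\partial_n\varphi$ with $\varphi(x)=\int_{-\infty}^{x_1}\!\!\cdots\int_{-\infty}^{x_n}\psi$, so that $\varphi\in C_p^\infty(\R^n)$ and $\varphi(F)=\int_{\R^n}\1_{\{z_i<F^i\,\forall i\}}\,\psi(z)\,dz$; inserting this into the formula for $\alpha=(1,\dots,1)$ and applying Fubini gives
\begin{equation*}
\E[\psi(F)]=\int_{\R^n}\psi(z)\,\E\big[\1_{\{F^i>z_i\,\forall i\}}\,H_{(1,\dots,1)}\big]\,dz,
\end{equation*}
whence $F$ has density $p(z)=\E[\1_{\{F>z\}}H_{(1,\dots,1)}]$. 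For smoothness, a parallel computation with the weights $H_{(1,\dots,1)+\alpha}$ shows that each distributional derivative $\partial_\alpha p$ equals a bounded continuous function of the form $\pm\E[\1_{\{F>z\}}H_{(1,\dots,1)+\alpha}]$: bounded because $H_{(1,\dots,1)+\alpha}\in L^1(\Omega)$, and continuous by dominated convergence since the density already excludes atoms on the boundary hyperplanes. Hence $p\in C^\infty$.

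The hard part — and the crux of the argument — is the $L^p$-control of the weights $H_\alpha$, i.e.\ verifying that $u_i\in\Dom\,\delta$ with $\delta(u_i)$ and all its iterates lying in $\bigcap_{p>1}L^p(\Omega)$. This needs two ingredients. First, the inverse Malliavin matrix $\gamma_F^{-1}$ must belong to $\mathbf{D}^\infty$; this follows from $F^i\in\mathbf{D}^\infty$ together with the negative-moment hypothesis $\E[(\det\gamma_F)^{-p}]<\infty$, via the identity $D(\gamma_F^{-1})=-\gamma_F^{-1}(D\gamma_F)\gamma_F^{-1}$ and Hölder's inequality (with the inductive control of higher derivatives obtained by differentiating this identity). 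Second, Meyer's inequalities furnish the continuity of $\delta:\mathbf{D}^{k+1,p}(H)\to\mathbf{D}^{k,p}$, so that every application of $\delta$ preserves membership in $\bigcap_{p>1}L^p(\Omega)$. Once these moment bounds are secured, the chain-rule, duality, and Fubini steps above are routine.
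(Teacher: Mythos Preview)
Your proof is correct and is precisely the classical Malliavin integration-by-parts argument. However, the paper does not prove this theorem at all: it is stated in Section~3 (``Elements of Malliavin's calculus'') as a background result borrowed from the literature (the section opens with ``Most of this section is borrowed from \cite{Nualart} and \cite{Sanz-Sole}''), and no proof is given. Your sketch is essentially the proof found in Nualart's book, so there is nothing to compare.
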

While Proposition~\ref{derivative} allows us to prove an integral is in $\mathbf D^{1,2}$ it falls short of telling us whether or not it belongs to $\mathbf D^{1,p}$ for  $p>2$. The following proposition, which is a result of Meyer's inequality (see \cite{Nualart} page 72) states the required conditions for going from $p=2$ to $p>2.$
\begin{prp}\label{22p}
Let $F$ be a random variable in $\mathbf D^{k,\alpha}$, where $\alpha>1$. suppose that $D^iF$ belongs to $L^p(\Omega,H^{\otimes i})$ for $i=0,1,\cdots,k$, and for some $p>\alpha$. Then $F\in \mathbf D^{k,p}$.
\end{prp}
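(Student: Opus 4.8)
The plan is to deduce the statement from Meyer's inequalities together with a smoothing argument based on the Ornstein--Uhlenbeck semigroup, feeding the outcome into Proposition~\ref{bddD}. Throughout, let $(P_t)_{t\ge0}$ denote the Ornstein--Uhlenbeck semigroup, acting on the Wiener chaos decomposition $F=\sum_{n\ge0}J_nF$ by $P_tF=\sum_{n\ge0}e^{-nt}J_nF$, and let $\mathsf L$ be its generator, so that $\mathsf L J_n=-nJ_n$ and $(I-\mathsf L)^{k/2}=\sum_{n\ge0}(1+n)^{k/2}J_n$. The content of Meyer's inequalities (see \cite[p.~72]{Nualart}) is that for every $q>1$ the norm $\|\cdot\|_{k,q}$ is equivalent to $F\mapsto\|(I-\mathsf L)^{k/2}F\|_{L^q(\Omega)}$; in particular $\mathbf D^{k,q}$ coincides with the domain in $L^q(\Omega)$ of the operator $(I-\mathsf L)^{k/2}$.

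First I would record the two structural facts about $(P_t)$ that make the argument work. Since $D$ lowers the chaos order by one, one checks on each chaos that $DP_t=e^{-t}P_tD$, and hence $D^jP_t=e^{-jt}P_tD^j$ for $0\le j\le k$; moreover $P_t$ is a contraction on $L^q(\Omega)$ and on the vector-valued spaces $L^q(\Omega;H^{\otimes j})$ for every $q\ge1$, and $P_tF\to F$ in $L^q(\Omega)$ as $t\downarrow0$ whenever $F\in L^q(\Omega)$. Because the hypothesis includes the case $i=0$, we have $F\in L^p(\Omega)$, so $P_tF\to F$ in $L^p(\Omega)$.

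Next I would show that $P_tF\in\mathbf D^{k,p}$ for every $t>0$, which is the step where the assumed regularity actually enters. Writing $(I-\mathsf L)^{k/2}P_t=\psi_t(\mathsf L)$ with $\psi_t(n)=(1+n)^{k/2}e^{-nt}$, the sequence $\psi_t$ is bounded and rapidly decreasing for each fixed $t>0$, so $\psi_t(\mathsf L)$ is a bounded multiplier on $L^p(\Omega)$; applying it to $F\in L^p(\Omega)$ shows $(I-\mathsf L)^{k/2}P_tF\in L^p(\Omega)$, whence $P_tF\in\mathbf D^{k,p}$ by Meyer's equivalence. Now I can estimate uniformly: using the commutation relation and the $L^p$-contractivity of $P_t$, for each $0\le j\le k$,
$$\|D^jP_tF\|_{L^p(\Omega;H^{\otimes j})}=e^{-jt}\|P_tD^jF\|_{L^p(\Omega;H^{\otimes j})}\le\|D^jF\|_{L^p(\Omega;H^{\otimes j})},$$
which is finite by hypothesis. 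Hence $\sup_{t>0}\|P_tF\|_{k,p}<\infty$.

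Finally I would choose any sequence $t_m\downarrow0$ and set $F_m:=P_{t_m}F$. By the previous two paragraphs $F_m\in\mathbf D^{k,p}$, $F_m\to F$ in $L^p(\Omega)$, and $\sup_m\|F_m\|_{k,p}<\infty$, so Proposition~\ref{bddD} gives $F\in\mathbf D^{k,p}$, as desired. The one genuinely nontrivial ingredient is the boundedness of the multiplier $\psi_t(\mathsf L)$ on $L^p$ for $p\neq2$ (equivalently, that $P_t$ sends $L^p(\Omega)$ into $\mathbf D^{k,p}$), which is exactly where Meyer's inequalities --- rather than elementary $L^2$ spectral theory --- are indispensable; everything else reduces to the commutation relation $D^jP_t=e^{-jt}P_tD^j$ and contractivity, which are routine.
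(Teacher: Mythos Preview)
The paper does not actually prove Proposition~\ref{22p}; it is stated without proof as a consequence of Meyer's inequalities, with a reference to \cite{Nualart}. So there is no ``paper's proof'' to compare against --- you are supplying an argument where the paper simply cites one.

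Your outline is sound and is one of the standard routes to this fact: smooth with the Ornstein--Uhlenbeck semigroup, use the commutation $D^jP_t=e^{-jt}P_tD^j$ together with the $L^p$-contractivity of $P_t$ to get $\sup_{t>0}\|P_tF\|_{k,p}<\infty$, and then invoke Proposition~\ref{bddD}. The one place I would tighten is your justification that $P_t$ maps $L^p(\Omega)$ into $\mathbf D^{k,p}$. Writing $\psi_t(n)=(1+n)^{k/2}e^{-nt}$ and saying ``$\psi_t$ is bounded and rapidly decreasing, so $\psi_t(\mathsf L)$ is bounded on $L^p$'' is not a valid inference for $p\neq2$: boundedness of a chaos multiplier does \emph{not} automatically yield $L^p$-boundedness. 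What you need here is either the analyticity of $(P_t)$ on $L^p(\Omega)$ for $1<p<\infty$ (which gives $\|(I-\mathsf L)^{k/2}P_t\|_{L^p\to L^p}<\infty$ for each $t>0$ via the standard analytic-semigroup estimate $\|\mathsf L^jP_t\|\le C_jt^{-j}$ together with interpolation), or an $L^p$ multiplier theorem of Meyer type. You do flag this step as ``the one genuinely nontrivial ingredient'', which is exactly right; just be aware that it is not Meyer's inequalities \emph{per se} that furnish it, but the closely related analyticity/multiplier machinery developed alongside them in \cite{Nualart}. With that clarification, your argument is correct and self-contained, and in fact gives more detail than the paper, which only points to the literature.
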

\section{The Malliavin derivatives of the solution}

In this section we assume that $\sigma\in C^\infty_b(\R)$ and the underlying L\'evy process satisfies {\bf{H}\-\ref{condition0}}. 
\begin{rmk}
We occasionally use the symbol $``\lesssim"$ in our proofs. By $X\lesssim Y$ we mean there is a positive $C$ such that $|X|\leq CY$. We might also subscript this by a parameter to denote dependence on this parameter. 
\end{rmk}
The main result in this section is the following.
\begin{thm}\label{4.1}
If $u$ is the solution to the Eq.~\eqref{intequ} then  $u(t,x)\in\mathbf D^\infty$ for almost all $(t,x)\in[0,T]\times[0,2\pi]$. Furthermore
\begin{equation}\label{Du}
D^k_\alpha u(t,x)=q_{t-\ss}(x,\yy)D^{k-1}_{\al}u(\ss,\yy)+\int_0^t\int_0^{2\pi}q_{t-r}(x,z)D^k_\alpha\sigma(u(r,z))\-w(dr,dz),
\end{equation}
where $\alpha\in E_T^k$. If $\alpha=((s_1,y_1),\cdots,(s_k,y_k))$ is a string of $k$ pairs in $E_T^k$, then $(\hat s,\hat y)$ refers to the pair with the largest first coordinate, i.e., $\hat s=s_1\vee\cdots\vee s_k$. By eliminating $(\hat s,\hat y)$ from $\alpha$ we obtain $\hat\alpha$. We refer the reader to the section~\ref{lemmas} for more details on the notations.
\end{thm}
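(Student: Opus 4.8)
The plan is to establish the existence of Malliavin derivatives of all orders by induction on $k$, working through the Picard approximation scheme from Theorem~\ref{existence} and passing to the limit via Proposition~\ref{bddD}. First I would set up the base case $k=1$. Recall $v_{n+1}(t,x) = v_0(t,x) + \delta(\Psi_n)$ where $\Psi_n(r,z) = q_{t-r}(x,z)\sigma(v_n(r,z))\1_{[0,t]}(r)$. Assuming inductively (on the Picard index $n$) that $v_n \in \L^{1,2}$ with a controlled derivative, I would apply Proposition~\ref{derivative} to differentiate the stochastic integral. The chain rule for the Malliavin derivative of $\sigma(v_n(r,z))$ produces $\sigma'(v_n(r,z))D_{s,y}v_n(r,z)$, which is bounded because $\sigma \in C_b^\infty$ so $\sigma'$ is bounded. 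This yields the recursion
\begin{equation*}
D_{s,y}v_{n+1}(t,x) = q_{t-s}(x,y)\sigma(v_n(s,y)) + \int_0^t\intT q_{t-r}(x,z)\sigma'(v_n(r,z))D_{s,y}v_n(r,z)\,w(dr\,dz),
\end{equation*}
matching the claimed formula~\eqref{Du} at order one (with the first term being the ``boundary'' term where the derivation index $(\hat s,\hat y)$ coincides with the integration variable).

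The core estimate is a uniform-in-$n$ bound on the seminorm of the derivative. The plan is to take the $p$-th moment, apply Burkholder's inequality to the stochastic integral, and use the boundedness of $\sigma$ and $\sigma'$ together with the $L^2(\T)$ bound $\|q_s\|_{L^2(\T)}^2$. The key analytic input is Lemma~\ref{supineq}: after multiplying by $e^{-\beta t}$ and taking suprema, the convolution structure contracts by a factor $\Upsilon(\beta)$, and since $\Upsilon(\beta)\to 0$ as $\beta\to\infty$, choosing $\beta$ large makes the map $\{D v_n\}\mapsto\{D v_{n+1}\}$ a contraction in the appropriate seminorm. This gives $\sup_n \|Dv_n\|_{\beta,p} < \infty$. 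Combined with the $L^p$-convergence $v_n \to u$ from Theorem~\ref{existence}, Proposition~\ref{bddD} then delivers $u \in \mathbf D^{1,p}$ for every $p$, and identifies $Du$ as the weak limit satisfying~\eqref{Du} with $k=1$.

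For the inductive step $k-1 \to k$, I would differentiate the order-$(k-1)$ equation once more. The essential structural observation is that the highest-first-coordinate pair $(\hat s, \hat y)$ governs which term is ``frozen'': differentiating in the variable carrying the latest time produces the explicit prefactor $q_{t-\hat s}(x,\hat y)$ acting on the lower-order derivative $D^{k-1}_{\hat\alpha}u(\hat s,\hat y)$, while the remaining $k-1$ derivations act inside the integral on $\sigma(u(r,z))$. Applying Fa\`a di Bruno to $D^k_\alpha\sigma(u(r,z))$ expands it as a sum over partitions of products of $\sigma^{(j)}(u)$ (all bounded) times lower-order Malliavin derivatives, each of which is controlled by the induction hypothesis. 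Iterating the Burkholder-plus-$\Upsilon(\beta)$ contraction argument for the $\|\cdot\|_{H^{\otimes k}}$-valued seminorm keeps the bounds uniform in the Picard index, so Proposition~\ref{bddD} upgrades membership from $\mathbf D^{k-1,p}$ to $\mathbf D^{k,p}$.

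I expect the main obstacle to be bookkeeping the combinatorics of the higher-order derivatives cleanly, rather than any single hard estimate: tracking the ordering of the $k$ pairs, isolating the maximal coordinate $(\hat s,\hat y)$, and verifying that every term generated by Fa\`a di Bruno is again of the self-similar convolution form to which Lemma~\ref{supineq} applies. The analytic heart reduces to the same contraction at each level, so the difficulty is organizational. A secondary technical point is verifying the Skorohod-integrability and measurability hypotheses of Proposition~\ref{derivative} at each order; these follow from the uniform seminorm bounds but must be checked to legitimately differentiate under the integral sign. Crucially, as the authors emphasize, none of this uses the spectral conditions on $\alpha,\beta$ from~\eqref{hype1}; only Hypothesis~\textbf{H}\-\ref{condition0}, through the finiteness and decay of $\Upsilon(\beta)$, is needed.
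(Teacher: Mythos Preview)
Your outline is correct and follows the paper's approach closely: double induction (on the Picard index $n$ and the derivative order $k$), Proposition~\ref{derivative} to differentiate the stochastic integral, Burkholder plus Minkowski plus Lemma~\ref{supineq} to produce a recursion contracting by a factor $\Upsilon(2\beta/p)$, Fa\`a di Bruno (the paper's Lemma giving~\eqref{mthD}) for the higher-order chain rule, and Proposition~\ref{bddD} to pass to the limit in $n$.

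One point you underplay: Proposition~\ref{bddD} yields only \emph{weak} convergence of $D^k v_n$ in $L^p(\Omega; H^{\otimes k})$, which establishes $u\in\mathbf D^{k,p}$ but does not by itself justify the explicit formula~\eqref{Du}. To identify the limit you must pass to the limit on the right-hand side of the recursion for $D^k v_{n+1}$, and that requires controlling the stochastic integral of $D^k_\alpha\sigma(v_n)-D^k_\alpha\sigma(u)$. The paper handles this by a separate contraction argument (Proposition~\ref{D1pu} for $k=1$, and the analogous estimate in Section~4.4 for general $k$) proving the \emph{strong} convergence $\|\Gamma^k(v_n-u)\|_{\beta,p}\to 0$; this uses~\eqref{Gssigma} and again the $\Upsilon(\beta)$ contraction. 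You should plan to include this step explicitly rather than relying on the weak limit alone.
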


We have a sequence of random functions $v_n$, defined through the Picard iteration~\eqref{picard}, which converge to $u$ in $\|\cdot\|_{\beta, p}$, for $\beta$ sufficiently large, and consequently in $L^p$, for all $(t,x)\in E_T$ and $p\geq 1$. We would like to use Theorem~\ref{bddD} to show that $u\in \mathbf D^{k,p}$ for all $p\geq 1$ and $k=1,2,\cdots$. Therefore the first step is to show that $v_n$'s are in $\mathbf D^{k,p}$ for all $k$ and $p$.  Then we need to show that $D^kv_n$'s are convergent weakly for all $k\geq1$; i.e; it is sufficient to have 
$$\sup_n\|v_n(t,x)\|_{k,p}<\infty,$$
where the $\|\cdot\|_{k,p}$ is the norm of $\mathbf D^{k,p}$ space. To this end we need a quantitive bound on the growth of Malliavin derivatives as well. We carry out this task by induction. The case $n=1$; i.e., $v_n(t,x)\in \mathbf D^{1,p}$ is the subject of subsection~\ref{first derivative}.  The second subsection is devoted to introducing a few notations and some technical lemmas that will allow us to go from the first derivative to the higher-order derivatives in the upcoming subsection.  The third subsection deals with the $k$'th derivatives of $v_n$'s and the short final subsection concludes the this section with proving main result of this section i.e., Theorem~\ref{4.1}. 

\subsection{The first derivative}\label{first derivative}
\begin{prp}\label{D1}
If the $v_n$'s are defined by~\eqref{picard}, then $v_n\in \mathbf D^{1,p}$ for all $n\geq0$, and: 
\begin{enumerate}
\item $Dv_0=0$ and 
\begin{align}\label{d1}
D_{s,y}v_{n+1}(t,x)=&q_{t-s}(x,y)\sigma(v_n(s,y))\\&+\intt\intT q_{t-r}(x,z)D_{s,y}\sigma(v_n(r,z))\-w(dr,dz).\notag
\end{align}
\item For all $n\geq0$, and $T\in[0,\infty),$
\begin{equation}\label{D1bound}
\|\G^1v_{n+1}\|^2_{\beta,p,T}\leq C_p\lip_{\sigma'}\Upsilon(2\beta/p)\left(1+\|\G^1v_n\|_{\beta,p,T}^2\right),
\end{equation}
where $\G^1_{t,x}f=\|Df(t,x)\|_H$, and 
$$\lip_{\sigma'}=2\max\left\{\sup_{x\in\T}|\sigma(x)|^2,\sup_{x\in\T}|\sigma'(x)|^2\right\}.$$
\end{enumerate}
\end{prp}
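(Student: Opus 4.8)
The plan is to argue by induction on $n$, where each inductive step combines the differentiability criterion of Proposition~\ref{derivative} with an a priori estimate obtained via Burkholder's inequality and Lemma~\ref{supineq}. For the base case $n=0$, the function $v_0(t,x)=T_tu_0(x)$ is deterministic, so $Dv_0=0$ and trivially $v_0\in\mathbf D^{1,p}$. For the inductive step, I assume $v_n\in\mathbf D^{1,p}$ with $\|\G^1v_n\|_{\beta,p,T}<\infty$ and show the same for $v_{n+1}$.

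First I would establish the differentiation formula~\eqref{d1}. Writing $v_{n+1}=v_0+\delta(u_n)$ with integrand $u_n(r,z)=q_{t-r}(x,z)\sigma(v_n(r,z))$, I verify the hypotheses of Proposition~\ref{derivative}: since $\sigma\in C_b^\infty$ is Lipschitz and $v_n\in\mathbf D^{1,p}$, the chain rule gives $D_{s,y}\sigma(v_n(r,z))=\sigma'(v_n(r,z))D_{s,y}v_n(r,z)$, and the adaptedness together with the $L^2$-integrability of $q_{t-r}$ guaranteed by Lemma~\ref{dnstyPrpty} shows the integrand lies in the required spaces. Applying~\eqref{Dsigma} then yields~\eqref{d1}, where the boundary term $u_n(s,y)=q_{t-s}(x,y)\sigma(v_n(s,y))$ appears explicitly and the remaining term is the Skorohod integral of the derivative. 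The chain rule turns $D_{s,y}\sigma(v_n(r,z))$ into $\sigma'(v_n(r,z))D_{s,y}v_n(r,z)$ inside that integral.

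Next comes the quantitative estimate~\eqref{D1bound}. Taking the $H$-norm of~\eqref{d1}, using $\|f+g\|_H^2\le 2\|f\|_H^2+2\|g\|_H^2$, and then the $p/2$-th moment, I split into two contributions. The first is $\E\,\bigl(\int_0^t\int_0^{2\pi}q_{t-s}(x,y)^2\,\sigma(v_n(s,y))^2\,ds\,dy\bigr)^{p/2}$, bounded using $\sup|\sigma|^2$ and $\int_0^t\|q_{t-s}\|_{L^2}^2\,ds$. For the second term I apply Burkholder's inequality to the Skorohod integral to pass the $L^p(\Omega)$-norm inside, then the chain rule bound $|D_{s,y}\sigma(v_n(r,z))|\le\|\sigma'\|_\infty\,|D_{s,y}v_n(r,z)|$, and finally Minkowski/Hölder to factor out $\|\G^1v_n\|_{\beta,p,T}^2$ against the kernel integral $\int_0^t\|q_{t-r}\|_{L^2}^2\,dr$. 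The decisive step is the passage from the space–time kernel integral to the factor $\Upsilon(2\beta/p)$: multiplying through by $e^{-\beta t}$ and absorbing the exponential into the $q$-integral, Lemma~\ref{supineq} converts $\sup_{t\le T}e^{-2\beta t/p}\int_0^t\|q_s\|_{L^2}^2\,ds$ into $\Upsilon(2\beta/p)$. Collecting both pieces and identifying the constant $\lip_{\sigma'}=2\max\{\sup|\sigma|^2,\sup|\sigma'|^2\}$ produces~\eqref{D1bound}.

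The main obstacle I anticipate is the careful bookkeeping in the Burkholder estimate, specifically ensuring the $\beta$-weighted seminorm is handled uniformly in $t\in[0,T]$ so that the time-integral collapses cleanly to $\Upsilon(2\beta/p)$ rather than merely to a finite $T$-dependent constant. This requires writing $e^{-\beta t}=e^{-\beta(t-r)}e^{-\beta r}$ (or the analogous split with the exponent $2\beta/p$), pulling the factor $e^{-\beta r}$ inside to control $\E|D_{s,y}v_n(r,z)|^p$ by $\|\G^1v_n\|_{\beta,p,T}^p$, and then recognizing the residual $\sup_t e^{-(2\beta/p)t}\int_0^t\|q_s\|_{L^2}^2\,ds$ as exactly the quantity bounded by $\Upsilon(2\beta/p)$ in Lemma~\ref{supineq}. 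Once this weighting is arranged, finiteness of $\|\G^1v_{n+1}\|_{\beta,p,T}$ follows from the inductive hypothesis, and combining this with the $L^p$-convergence $v_n\to u$ allows Proposition~\ref{bddD} to be invoked in the subsequent passage to the limit.
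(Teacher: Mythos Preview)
Your proposal is correct and follows essentially the same route as the paper: induction on $n$, Proposition~\ref{derivative} for the formula~\eqref{d1}, then Burkholder's inequality for Hilbert-space-valued martingales combined with Minkowski's inequality and the exponential weighting trick (the split $e^{-2\beta t/p}=e^{-2\beta(t-r)/p}e^{-2\beta r/p}$) so that Lemma~\ref{supineq} converts the kernel integral into $\Upsilon(2\beta/p)$. The only cosmetic difference is that the paper applies the triangle inequality at the level of the $H$-norm before passing to $L^p(\Omega)$, rather than squaring first, and it closes by invoking Proposition~\ref{22p} to upgrade from $\mathbf D^{1,2}$ to $\mathbf D^{1,p}$.
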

\begin{proof} 
We need to mention that $D_{s,y}\sigma(v_n(r,z))=0$ when $r<s$; i.e., the integral vanishes on the subinterval $[0,s]$.

We proceed by applying induction on $n$. We will find that the proofs of these three conclusions go hand in hand, i.e., we use~\eqref{D1bound} for $n=n_0$ to show that $v_{n_0+1}$ is in $\mathbf D^{1,2}$, with a derivative which satisfies~\eqref{d1}. Then we use~\eqref{d1}, to show~\eqref{D1bound} holds for $n=n_0+1$. 

For $n=0$ all these three conclusions hold vacuously. Next assume~\eqref{D1bound} holds for $v_n$, then $v_{n+1}\in \mathbf D^{1,2}$ and~\eqref{d1} holds by Proposition~\ref{derivative}. Here we will not go through the details to show this, as we will do this later for the derivatives of higher order [see subsection~\ref{lemmas} below]. Next, we show that~\eqref{D1bound} holds for $v_{n+1}$. Since  
\begin{align}
D_{s,y}v_{n+1}(t,x)&=q_{t-s}(x,y)\sigma(v_n(s,y))\\&+\intt\intT q_{t-r}(x,z)D_{s,y}\sigma(v_n(r,z))\-w(dr,dz),\notag
\end{align}
then by the triangular inequality for the $H$ norm
\begin{align*}
\G^1_{t,x}v_{n+1}&\leq
\sup_{x\in\T}|\sigma(x)|\left(\intt\|q_s\|_{L^2(\T)}^2\-ds\right)^{1/2}\\
&+\left(\intt\intT \left|\intt\intT q_{t-r}(x,z)D_{s,y}\sigma(v_n(r,z))\-w(dr,dz)\right|^2\-dy\-ds\right)^{1/2}.
\end{align*}

Take the $L^p(\Omega)$ norm from the both sides. Since, by the chain rule 
$$D_{s,y}\sigma(v_n(r,z))=\sigma'(v_n(r,z))D_{s,y}v_n(r,z),$$ and by the Burkholder's inequality for the Hilbert space-valued martingales,
\begin{align*}
&\E\left\|\intt\intT q_{t-r}(x,z)D_{s,y}\sigma(v_n(r,z))\-w(dr,dz)\right\|_H^p\\
&\lesssim_p\E\left(\intt\intT q^2_{t-r}(x,z)\|D_{s,y}\sigma(v_n(r,z))\|_H^2dz\-dr\right)^{p/2}\\
&\leq\sup_{x\in\T}|\sigma'(x)|^p\E\left(\intt\intT q^2_{t-r}(x,z)\|D_{s,y}v_n(r,z)\|_H^2dz\-dr\right)^{p/2},
\end{align*}
we obtain
\begin{eqnarray*}
\|\G^1_{t,x}v_{n+1}\|_{L^p(\Omega)}\lesssim_p
\sup_{x\in\T}|\sigma(x)|\left(\intt\|q_s\|_{L^2(\T)}^2\-ds\right)^{1/2}\\
+\sup_{x\in\T}|\sigma'(x)|\left\{\E\left(\intt\intT q^2_{t-r}(x,z)|\G^1_{r,z}v_n|^2dz\-dr\right)^{p/2}\right\}^{1/p}.
\end{eqnarray*}
Now, apply Minkowski's inequality $[d\P\times\-dz\-dr]$ to the last term to switch the expectation and the double integral, as follows
\begin{align*}
\|\G^1_{t,x}&v_{n+1}\|^2_{L^p(\Omega)}\\
&\lesssim_{p,\sigma'}
\left(\intt\|q_s\|_{L^2(\T)}^2\-ds
+\intt\intT q^2_{t-r}(x,z)\|\G^1_{r,z}v_n\|_{L^p(\Omega)}^2dz\-dr\right)\\
&\lesssim_{p,\sigma'}\left(\intt\|q_s\|_{L^2(\T)}^2\-ds
+\|\G^1v_n\|_{\beta,p, T}^2\intt\intT q^2_{t-r}(x,z) e^{2\beta r/p}dz\-dr\right),
\end{align*}
 where the last inequality follows from the trivial inequality 
 \begin{equation}\label{trivial}
\|\G^1_{r,z}v_n\|_{L^p(\Omega)}^2\leq e^{2\beta r/p}\|\G^1v_n\|_{\beta,p, T}^2\hspace{.2 in}(r,z)\in[0,T]\times\T.
\end{equation}
Therefore if we change the variable $t-r\to r$ in the second integral, and multiply the inequality by $e^{-2\beta t/p}$ we arrive at 
\begin{eqnarray*}
e^{-2\beta t/p}\|\G^1_{t,x}v_{n+1}\|^2_{L^p(\Omega)}\hspace{3 in}\\
\lesssim_p
 \lip_{\sigma'}\left(e^{-2\beta t/p}\intt\|q_s\|_{L^2(\T)}^2\-ds
+\|\G^1v_n\|_{\beta,p}^2\intt\|q_r\|_{L^2(\T)}^2 e^{-2\beta r/p}\-dr\right)\\
\lesssim_p\lip_{\sigma'}\Upsilon(2\beta/p)\left(1+\|\G^1v_n\|_{\beta,p,T}^2\right),
\end{eqnarray*}
where the last inequality follows from Lemma~\ref{supineq}.  By optimizing this expression over all $t\in[0,T]$ and all $x\in\T$ we arrive at
\begin{equation}\label{G^1bd}
\|\G^1v_{n+1}\|_{\beta,p,T}^2\leq C_p\lip_{\sigma'}\Upsilon(2\beta/p)\left(1+\|\G^1v_n\|_{\beta,p,T}^2\right).
\end{equation}
This proves that $\|Dv_{n+1}\|_H\in L^p(\Omega)$, and so $v_{n+1}(t,x)\in \mathbf D^{1,p}$, for all $(t,x)\in E_T$ by Proposition~\ref{22p}. 
\end{proof}
\begin{prp}\label{D1pu}
If $u=u(t,x)$ be the solution to the Eq.~\eqref{intequ}, then $u(t,x)\in \mathbf D^{1,p}$ and $Du$ satisfies~\eqref{Du}; i.e.,
\begin{equation}\label{D1u}
D_{s,y} u(t,x)=q_{t-s}(x,y)u(s,y)+\intt\intT q_{t-r}(x,z)D_{s,y}\sigma(u(r,z))\-w(dr,dz).
\end{equation}
\end{prp}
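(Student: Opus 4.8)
The plan is to transfer the conclusions of Proposition~\ref{D1}, which establish that each Picard iterate $v_n$ lies in $\mathbf{D}^{1,p}$ with a uniformly bounded Malliavin seminorm, to the limit $u$. Recall from Theorem~\ref{existence} that $v_n\to u$ in $\|\cdot\|_{\beta,p}$ for $\beta$ large enough, and hence $v_n(t,x)\to u(t,x)$ in $L^p(\Omega)$ for each fixed $(t,x)\in E_T$. The natural instrument is Proposition~\ref{bddD}: if we can produce the uniform bound $\sup_{n}\|v_n(t,x)\|_{1,p}<\infty$, then $u(t,x)\in\mathbf{D}^{1,p}$ and $Dv_n(t,x)\to Du(t,x)$ weakly in $L^p(\Omega;H)$.

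First I would extract the uniform bound from the recursion~\eqref{D1bound}. Choosing $\beta$ large enough that $q:=C_p\lip_{\sigma'}\Upsilon(2\beta/p)<1$---which is possible because $\Upsilon(2\beta/p)\to 0$ as $\beta\to\infty$ by the discussion following the definition of $\Upsilon$---the inequality $a_{n+1}\le q(1+a_n)$ with $a_n:=\|\G^1 v_n\|_{\beta,p,T}^2$ yields a geometric bound, so $\sup_n a_n\le q/(1-q)<\infty$. Since $e^{-\beta t}$ is bounded below on $[0,T]$, this $\|\cdot\|_{\beta,p,T}$ bound gives $\sup_n\|v_n(t,x)\|_{1,p}<\infty$ for each $(t,x)\in E_T$, which is exactly the hypothesis of Proposition~\ref{bddD}. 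I conclude $u(t,x)\in\mathbf{D}^{1,p}$ for all $p\ge 1$.

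It remains to identify the derivative, i.e.\ to pass to the limit in~\eqref{d1} and obtain~\eqref{D1u}. The weak $L^p(\Omega;H)$ convergence of $Dv_n(t,x)$ to $Du(t,x)$ lets me identify the limit by testing against arbitrary elements of $L^q(\Omega;H)$. On the right-hand side of~\eqref{d1}, the term $q_{t-s}(x,y)\sigma(v_n(s,y))$ converges to $q_{t-s}(x,y)\sigma(u(s,y))$ because $\sigma$ is Lipschitz (being in $C_b^\infty$) and $v_n(s,y)\to u(s,y)$ in $L^p$. For the stochastic integral term, I would write $D_{s,y}\sigma(v_n(r,z))=\sigma'(v_n(r,z))D_{s,y}v_n(r,z)$ by the chain rule and show, using Burkholder's inequality exactly as in the proof of Proposition~\ref{D1} together with the boundedness of $\sigma'$, the established weak convergence of the derivatives, and the $L^p$-convergence $v_n\to u$, that this integral converges to $\intt\intT q_{t-r}(x,z)D_{s,y}\sigma(u(r,z))\,w(dr,dz)$. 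Assembling the two limits gives~\eqref{D1u}.

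The main obstacle is the limit passage in the stochastic integral term, since it couples two simultaneous convergences: $v_n\to u$ in $L^p$, which controls the $\sigma'(v_n)\to\sigma'(u)$ factor, and the merely \emph{weak} $L^p(\Omega;H)$ convergence of $D v_n\to Du$, which controls the derivative factor. A weak limit does not commute freely with multiplication by the strongly convergent factor $\sigma'(v_n)$, so the argument must be organized carefully---for instance by adding and subtracting $\sigma'(u(r,z))D_{s,y}v_n(r,z)$, handling the difference $(\sigma'(v_n)-\sigma'(u))D v_n$ by a strong estimate (Burkholder plus the uniform bound on $\|\G^1 v_n\|_{\beta,p,T}$ and $L^p$-convergence of $v_n$), and treating $\sigma'(u)(D v_n-Du)$ by weak convergence against fixed test functions. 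This split is the delicate point; everything else is a routine repetition of the estimates already carried out for the iterates.
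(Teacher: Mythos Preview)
Your argument for membership in $\mathbf D^{1,p}$---iterate the recursion \eqref{D1bound}, choose $\beta$ large so that $C_p\lip_{\sigma'}\Upsilon(2\beta/p)<1$, and invoke Proposition~\ref{bddD}---is exactly what the paper does.

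Where you diverge from the paper is in the identification of $Du$. You propose to pass to the limit in \eqref{d1} using only the \emph{weak} $L^p(\Omega;H)$ convergence $Dv_n(t,x)\rightharpoonup Du(t,x)$ furnished by Proposition~\ref{bddD}. The paper instead proves \emph{strong} convergence $\|\G^1(v_n-u)\|_{\beta,p}\to 0$. It does this by writing down the analogue of \eqref{d1} for the difference $D(v_{n+1}-u)$, applying the same Burkholder--Minkowski machinery, and obtaining a recursion
\[
\|\G^1(v_{n+1}-u)\|_{\beta,p}^2 \;\le\; C_{\beta,p,\sigma'}\bigl(\lambda_n + \|\G^1(v_n-u)\|_{\beta,p}^2\bigr),
\]
where $\lambda_n\to 0$ collects the terms coming from $\|v_n-u\|_{\beta,p}$ and $\|(v_n-u)\G^1 u\|_{\beta,p}$, and $C_{\beta,p,\sigma'}<1$ for $\beta$ large. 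Strong convergence of the derivatives then identifies the limit in \eqref{d1} immediately, with no functional-analytic subtlety.

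Your route is not wrong, but the point you flag as ``delicate'' is more delicate than your sketch allows. Proposition~\ref{bddD} gives weak convergence of $Dv_n(r,z)$ only for each \emph{fixed} $(r,z)$; it does not directly give weak convergence of the process $(r,z)\mapsto Dv_n(r,z)$ in $L^p(\Omega;H\otimes L^2(E_T))$, which is what you need in order to push the limit through the $H$-valued stochastic integral in the term you call $\sigma'(u)(Dv_n-Du)$. One can close this gap (uniform bounds give weak compactness in a reflexive space, and pointwise weak limits identify the limit), but it requires an extra argument you have not supplied. The paper's contraction argument sidesteps all of this and, as a bonus, yields strong convergence of the derivatives, which is reused later for the higher-order case.
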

\begin{proof}
The fact that $u\in \mathbf D^{1,2}$ follows easily from the bound~\eqref{G^1bd}, because by iterating this bound we get
 \begin{eqnarray*}
 \|\G^1v_n\|^2_{\beta, p}\leq\alpha+\alpha^2+\cdots+\alpha^n,
  \end{eqnarray*}
where $\alpha:=C_p\lip_{\sigma'}\Upsilon(2\beta/p).$ Since $\lim_{\beta\to\infty}\Upsilon(\beta)=0$, we can choose $\beta>0$ sufficiently large so that $\alpha<1$ and consequently we get
 \begin{eqnarray}\label{Tfinite}
\sup_n \|\G^1v_n\|^2_{\beta, p,T}\leq\frac{\alpha}{1-\alpha}<\infty.
  \end{eqnarray}
Then $u(t,x)\in \mathbf D^{1,2}$, by Proposition ~\ref{bddD}. Then Proposition~\ref{22p} proves that $u(t,x)\in \mathbf D^{1,p}$ for all $p\geq 1$.  Since the right-hand-side of ~\eqref{Tfinite} is independent of $T$, it holds for $T=\infty$. Because of this, in the rest of the paper, we only work with the norm $\|\cdot\|_{\beta, p}$.

In order to derive ~\eqref{D1u}, it suffices to show that
\begin{equation}
\|\G^1(v_n-u)\|_{\beta,p}\to 0,\hspace{.2 in}\text{as}\;n\to\infty,
\end{equation}
where 
\begin{equation*}
\G_{t,x}^1u:=\|Du(t,x)\|_{H}.
\end{equation*}
By the triangular inequality, applied first to the $H$ norm and then to the $L^p(\Omega)$ norm, and squaring both sides, we have 
\begin{align*}
&\frac{1}{2}\|\G^1(v_{n+1}-u)\|^2_{L^p(\Omega)}
\leq\left\{\E\|q_{t-\bullet}(x,\ast)[\sigma(v_n(\bullet,\ast))-\sigma(u(\bullet,\ast))]\|^p_H\right\}^{2/p}\hspace{.3 in}\\
&+\left\{\E\left\|\intt\intT q_{t-r}(x,z)D[\sigma(v_n(r,z))-\sigma(u(r,z))]w(dr, dz)\right\|^p_H\right\}^{2/p}.
\end{align*}
Since $\sigma$ is Lipschitz, by applying Minkowski's inequality to the first term and Burkholder's inequality followed by Minkowski's inequality to the second term, we get 
\begin{eqnarray*}
C\|\G^1_{t,x}(v_{n+1}-u)\|^2_{L^p(\Omega)}\leq\intt\intT q^2_{t-s}(x,y)\left\|v_n(s,y)-u(s,y)\right\|^2_{L^p(\Omega)}\-dy\-ds
\hspace{.3 in}\\
+\intt\intT q^2_{t-r}(x,z)\left\|\G^1_{r,z}(\sigma(v_n)-\sigma(u))\right\|_{L^p(\Omega)}^2 dz\-dr.
\end{eqnarray*}
Then, by ~\eqref{trivial} and Lemma~\ref{supineq} we obtain
\begin{eqnarray*}
C\|\G^1_{t,x}(v_{n+1}-u)\|^2_{L^p(\Omega)}\leq\left\|v_n-u\right\|^2_{\beta,p}e^{2\beta t/p}\Upsilon(2\beta/p)
\hspace{.3 in}\\
+\left\|\G^1(\sigma(v_n)-\sigma(u))\right\|_{\beta,p}^2e^{2\beta t/p}\Upsilon(2\beta/p),
\end{eqnarray*}
where $C$ depends on $\lip_{\sigma'}$, and $C_p$, the constant in Burkholder's inequality. After we optimize on $t\in[0,T]$ and $x\in[0,2\pi]$, we have 
\begin{equation}\label{rineq}
\|\G^1(v_{n+1}-u)\|_{\beta,p}^2\leq C_{\beta,p,\sigma'}\left(\|v_n-u\|_{\beta,p}^2+\left\|\G^1(\sigma(v_n)-\sigma(u))\right\|_{\beta,p}^2\right),
\end{equation}
where $C_{\beta,p,\sigma'}\to0$ as $\beta\to\infty$. Since
\begin{align*}
D[\sigma(v_n(r,z))-\sigma(u(r,z))]=\sigma'(v_n(r,z))[Dv_n(r,z)-Du(r,z)]\\+[\sigma'(v_n(r,z)-\sigma'(u&(r,z))]Du(r,z),
\end{align*}
then by applying the triangular inequality, and considering the boundedness and the Lipschitz property of $\sigma'$ we obtain the following:
\begin{align*}
C\G^1_{r,z}(\sigma(v_n)-\sigma(u))\leq\G^1_{r,z}(v_n-u)+|v_n(r,z)-u(r,z)|\G^1_{r,z}u.
\end{align*}
Therefore,
\begin{align}\label{D1ua}
C\|\G^1_{r,z}(\sigma(v_n)-\sigma(u))\|_{\beta,p}\leq\|\G^1(v_n-u)\|_{\beta,p}+\|(v_n-u)\G^1u\|_{\beta,p}.
\end{align}
By optimizing over $(t,x)\in E_T$ and substituting in~\eqref{rineq} we obtain
\begin{align}\label{D13}
\|\G^1(v_{n+1}-u)\|^2_{\beta,p}\leq C_{\beta,p,\sigma'}(\|v_n-u\|_{\beta,p}^2&+\|(v_n-u)\G^1u\|^2_{\beta,p}\\&+\|\G^1(v_n-u)\|^2_{\beta,p}).\notag
\end{align}
Consider the first two terms in the parenthesis in~\eqref{D13}. From Thorem~\ref{existence} we know that $\|v_n-u\|_{\beta,p}^2\to0$ as $n\to\infty$, while the second term in~\eqref{D13} vanishes as $n\to\infty$, for example, by the Cauchy-Schwarz inequality. Therefore~\eqref{D13} can be written as 
\begin{equation}\label{iterate2}
\|\G^1(v_{n+1}-u)\|^2_{\beta,p}\leq C_{\beta,p,\sigma'}\left(\lambda_n+\|\G^1(v_n-u)\|^2_{\beta,p}\right),
\end{equation}
where $\lambda_n\to0$ as $n\to\infty$. Choose $\beta$ sufficiently large such that $C_{\beta,p,\sigma'}<1$. Then ~\eqref{iterate2} implies that
$$\lim_{n\to\infty}\|\G^1(v_{n+1}-u)\|^2_{\beta,p}=0.$$
This finishes the proof.
\end{proof}
To state and prove the result for derivatives of higher order, we need to introduce some notations and prove some preparatory lemmas that will be stated next.

\subsection{Preliminaries}\label{lemmas}
We know that  the $m$th derivative of $v_n(t,x)$, if exists, belongs to $L^{2}(E_T^{m+1}\times\Omega)$. Recall that $\L^{1,2}=\mathbf D^{1,2}(E_T^{m+1}\times\Omega)$. 
Let $\alpha=\alpha_m$ denote an element in $E_T^m$. We can write 
$$\alpha=((s_1,y_1),\cdots,(s_m,y_m)).$$

Let $\ss:=\max\{s_1,\cdots,s_m\}$. If $i\in\{1,\cdots,m\}$ is so that $s_i=\ss$, then we let $\yy$ denote $y_i$ and $\al_m:=((s_1,y_1),\cdots,(s_{i-1},y_{i-1}),(s_{i+1},y_{i+1}),\cdots,(s_m,y_m))$. Note that $\al_m\in E_T^{m-1}$. 

If we think of $\alpha=\alpha_m$ as a set of $m$ pairs $\alpha=\{(s_1,y_1),\cdots,(s_m,y_m)\}$, instead of an ordered $m$-tuple, then the partitions of $\alpha$ are defined. Let $\mathcal P^m:=\text{the set of all partitions of}\;\alpha\ E_T^m$.

If $\d=\{\d_1,\cdots, \d_l\}\in\mathcal{P}^m$, then let $|\d_j|$ denote the cardinality of $\d_j$, where $j=1,\cdots,l$. Clearly $|\d_1|+\cdots+|\d_l|=m$. If  $\d=\{\d_1,\cdots, \d_l\}\in\mathcal{P}^m$, then $D^{|\d_j|}_{\d_j}F$ makes sense for $j=1,\cdots,l$. For example if $\d_1=\{(s_1,y_1),(s_3,y_3)\}$, then $D^{|\d_1|}_{\d_1}F=D^2_{(s_1,y_1)(s_3,y_3)}F$. Furthermore, if $\d=\{\d_1,\cdots,\d_l\}\in\mathcal{P}^m$, then we introduce the new notation $D^\d F$, and define it by
\begin{equation}\label{rhonotation}
D_\alpha^\d F:=D^{|\d_1|}_{\d_1}F\times\cdots\times D^{|\d_l|}_{\d_l}F.
\end{equation}
Notice that 
\begin{equation}\label{Htensornorm}
\|D^\rho F\|_{H^{\otimes m}}=\Pi_{i=1}^l \|D^{|\d_i|}F\|_{H^{\otimes|\d_i|}}.
\end{equation}
Fix $l\leq m$ and let $\mathcal{P}_l^m$ denote the set of all $\d\in\mathcal{P}^m$ such that $|\d|=l$. Trivially 
$$\mathcal{P}^m=\cup_{l=1}^m\mathcal{P}^m_l.$$
We let $\G^{\d}_{x,y}v$ to denote the $H^m$
norm of $D^\rho v(t,x)$ i.e
\begin{equation}\label{Gamtx}
\G^\d_{t,x}v=\|D^{\d}v(t,x)\|_{H^{\otimes m}}.
\end{equation}

If  $\d$ denotes the only member of $\mathcal{P}^m_1$, i.e., $\d=\{\{(s_1,y_1),\cdots,(s_k,y_k)\}\}$, then write $\G^m_{t,x}v$ instead of $\G^{\d}_{t,x}v$.

The following lemma allows us to approximate $\|\sigma(v_n(t,x)\|_{k,p}$, where $\|\cdot\|_{k,p}$ denotes the norm on $\mathbf D^{k,p}$. 
\begin{lma}
Assume $\sigma$ is smooth and bounded together with all its derivatives. If $F\in \cap_{p\in[1,\infty)}\mathbf D^{m,p}$, then so is $\sigma(F)$. Furthermore, for $\alpha\in E_T^m$,
\begin{equation}\label{mthD}
D^m_\alpha\sigma(F)=\sum_{j=1}^m\sigma^{(j)}(F)\sum_{\d\in\mathcal{P}_j^m} D_\alpha^\d F,
\end{equation}
where $\mathcal{P}_j^m$ is the set of all partitions of $\alpha$, comprised of $j$ components $\d_1,\cdots,\d_j$, and $\sigma^{(j)}$ denotes the $j$th derivative of $\sigma$.
\end{lma}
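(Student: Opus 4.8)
The statement is the Malliavin-calculus analogue of the Faà di Bruno formula, so the plan is to argue by induction on $m$, with the first-order chain rule supplying the base case and a single combinatorial bookkeeping step driving the induction. For $m=1$, since $\sigma\in C_b^\infty(\R)$ is in particular $C_b^1$ and $F\in\mathbf D^{1,p}$, the standard chain rule gives $\sigma(F)\in\mathbf D^{1,p}$ with $D_{(s_1,y_1)}\sigma(F)=\sigma'(F)\,D_{(s_1,y_1)}F$; this is exactly \eqref{mthD} at $m=1$, since $\mathcal P_1^1$ consists of the single partition whose only block is $\{(s_1,y_1)\}$.

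For the inductive step I would assume the formula at level $m-1$ and, using the symmetry of iterated Malliavin derivatives, obtain $D^m_\alpha$ by applying one further derivative $D_\beta$ with $\beta:=(s_m,y_m)$ to the level-$(m-1)$ identity written for $\alpha'=((s_1,y_1),\cdots,(s_{m-1},y_{m-1}))$. Each summand there has the form $\sigma^{(j)}(F)\,D^\d_{\alpha'}F$ with $\d\in\mathcal P_j^{m-1}$, and I would differentiate it by the Leibniz rule. The chain rule turns $D_\beta\sigma^{(j)}(F)$ into $\sigma^{(j+1)}(F)\,D_\beta F$, which adjoins $\beta$ as a new singleton block and converts $\d\in\mathcal P_j^{m-1}$ into a partition of $\alpha$ in $\mathcal P_{j+1}^m$; the Leibniz rule applied to the product $D^\d_{\alpha'}F=\prod_i D^{|\d_i|}_{\d_i}F$ keeps the prefactor $\sigma^{(j)}(F)$ and inserts $\beta$ into one of the $j$ existing blocks, producing exactly the partitions in $\mathcal P_j^m$ whose block containing $\beta$ has size at least two.

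The heart of the argument is that these two operations, summed over $j$ and over $\d$, reproduce each partition of $\alpha$ exactly once: given $\pi\in\mathcal P_j^m$, either $\{\beta\}$ is a block of $\pi$, in which case $\pi$ is hit precisely once by the singleton-creation step applied to $\pi\setminus\{\{\beta\}\}\in\mathcal P_{j-1}^{m-1}$, or $\beta$ shares its block with other pairs, in which case $\pi$ is hit precisely once by the insertion step applied to the partition of $\alpha'$ obtained by deleting $\beta$ from its block. Reindexing the doubly-indexed sum accordingly collapses it to $\sum_{j=1}^m\sigma^{(j)}(F)\sum_{\pi\in\mathcal P_j^m}D^\pi_\alpha F$, which is \eqref{mthD}.

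It remains to justify that every manipulation takes place inside the right Malliavin space and that $\sigma(F)\in\cap_p\mathbf D^{m,p}$. For integrability I would use \eqref{Htensornorm} to write $\|D^\d_\alpha F\|_{H^{\otimes m}}=\prod_i\|D^{|\d_i|}F\|_{H^{\otimes|\d_i|}}$; since $F\in\cap_p\mathbf D^{m,p}$ each factor lies in every $L^q(\Omega)$, so Hölder's inequality puts the product in every $L^p(\Omega)$, and the boundedness of the $\sigma^{(j)}$ then gives $\|D^m\sigma(F)\|_{H^{\otimes m}}\in\cap_p L^p(\Omega)$. The genuinely delicate point — which I expect to be the main obstacle — is not the combinatorics but verifying that each term $\sigma^{(j)}(F)\,D^\d_{\alpha'}F$ really belongs to $\mathbf D^{1,p}(H^{\otimes(m-1)})$, so that the formal Leibniz and chain rules are legitimate: this requires combining the first-order chain rule for $\sigma^{(j)}(F)$ with a product rule for products of Malliavin derivatives of $F$, controlling the mixed $L^p$ norms through Hölder as above, and then concluding $\sigma(F)\in\mathbf D^{m,p}$ from the differentiability of $D^{m-1}\sigma(F)$ established at level $m-1$.
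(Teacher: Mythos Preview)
Your argument is correct, but it is organized differently from the paper's. The paper does not carry out the induction directly on an arbitrary $F\in\cap_{p}\mathbf D^{m,p}$; instead it first proves \eqref{mthD} for smooth cylindrical functionals $F=f(w(h_1),\dots,w(h_n))\in\cS$ by induction on $m$, where the chain and Leibniz rules reduce to ordinary calculus on $\R^n$ and need no justification, and only afterwards passes to general $F$ by approximation in the $\mathbf D^{m,p}$ norms. Your route performs the same combinatorial induction but works with the abstract $F$ throughout, which forces you to confront, at every step, the ``genuinely delicate point'' you correctly flag: that $\sigma^{(j)}(F)\,D^\d_{\alpha'}F\in\mathbf D^{1,p}(H^{\otimes(m-1)})$. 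This is doable exactly as you outline (chain rule for $\sigma^{(j)}(F)$, product rule for the $\prod_i D^{|\d_i|}_{\d_i}F$, H\"older on the resulting mixed moments using $F\in\cap_q\mathbf D^{m,q}$), so there is no gap; the paper's detour through $\cS$ simply trades that verification for a closing approximation step, while your approach is more self-contained but heavier at each inductive stage.
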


\begin{proof}We can easily prove this for the smooth functionals by induction, and then extend the result to $F\in\cap_{p\in[1,\infty)}\mathbf D^{m,p}$ by approximation by the smooth functionals.
\end{proof}
\begin{lma}\label{D^kbp}
Let $\alpha\in E_T^m$, and $\d=\{\d_1,\cdots,\d_l\}\in\mathcal{P}^m$ denote a partition of $\alpha$. Let $U(t,z)$ and $V(t,z)$ belong to $\cap_{p>1}\mathbf D^{m,p}$ for almost all $r,z$ and $\|\G^{|\d_j|}V\|_{\beta,p}<\infty$ for all $p\geq1$. If $\sigma$ is bounded and  smooth with bounded derivatives of all orders, then 
 \begin{align}\label{Gsigma}
 \|\G^m\sigma(V)\|_{\beta,p}\lesssim
\|\G^mV\|_{\beta,p}+\sum_{j=2}^m\sum_{\d_1\cdots\d_j\in\mathcal{P}^m}\Pi_{i=1}^j\|\G^{|\d_j|}V\|_{\beta,jp}.
\end{align}
Furthermore, if $\|\G^{|\d_j|}U\|_{\beta,p}<\infty$ for all $p\geq1$, then we have 
\begin{equation}\label{Gssigma}
\|\G^m(\sigma(V)-\sigma(U))\|_{\beta,p} \lesssim
\|\G^m(V-U)\|_{\beta,p}+\sum_{j=2}^m\sum_{\d_1\cdots\d_j\in\mathcal{P}^m}\Pi_{i=1}^j\|\G^{|\d_j|}(V-U)\|_{\beta,jp}.
\end{equation}
\end{lma}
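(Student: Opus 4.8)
The plan is to reduce both inequalities to the Faà di Bruno--type identity \eqref{mthD}, the tensor-norm factorization \eqref{Htensornorm}, and two elementary tools: the subadditivity of $\|\cdot\|_{\beta,p}$ (it is an $L^p(\Omega)$ norm followed by a weighted supremum, hence a seminorm) and a weighted Hölder bound. First I would record the Hölder estimate
$$\left\|\prod_{i=1}^j g_i\right\|_{\beta,p}\leq\prod_{i=1}^j\|g_i\|_{\beta,jp}.$$
This follows by writing $\|\prod_i g_i\|_{\beta,p}^p=\sup_{t,x}e^{-\beta t}\E\prod_i|g_i|^p$, distributing the weight as $e^{-\beta t}=\prod_{i=1}^j e^{-\beta t/j}$, and applying Hölder in $\Omega$ with the $j$ equal exponents $j,\dots,j$; the supremum of a product is then dominated by the product of the suprema, and each supremum is $\|g_i\|_{\beta,jp}^{p}$.

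For \eqref{Gsigma} I would start from \eqref{mthD} with $F=V$, take the $H^{\otimes m}$ norm pointwise in $(t,x)$, and pull the scalar factors $\sigma^{(j)}(V)$ out of the norm. Since every derivative of $\sigma$ is bounded, each $|\sigma^{(j)}(V)|$ is absorbed into the implied constant, and \eqref{Htensornorm} turns $\|D^\d V\|_{H^{\otimes m}}$ into $\prod_{i=1}^j\G^{|\d_i|}_{t,x}V$. The $j=1$ summand corresponds to the single-block partition and contributes exactly $\G^m_{t,x}V$, while for $j\geq 2$ every block has cardinality strictly smaller than $m$, so the pointwise estimate reads $\G^m_{t,x}\sigma(V)\lesssim\G^m_{t,x}V+\sum_{j=2}^m\sum_{\d\in\mathcal{P}^m_j}\prod_{i=1}^j\G^{|\d_i|}_{t,x}V$. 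Applying $\|\cdot\|_{\beta,p}$, its triangle inequality, and then the weighted Hölder bound to each product of $j$ factors yields \eqref{Gsigma}.

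The estimate \eqref{Gssigma} follows the same template but needs a product-difference (telescoping) step, which I expect to be the main obstacle. Subtracting the expansions \eqref{mthD} for $V$ and $U$, each pair $\sigma^{(j)}(V)D^\d_\alpha V-\sigma^{(j)}(U)D^\d_\alpha U$ is rewritten by treating $\sigma^{(j)}(\cdot)$ together with the $j$ factors $D^{|\d_i|}_{\d_i}(\cdot)$ as one product of $j+1$ entries and telescoping via $\prod a_i-\prod b_i=\sum_k(\prod_{i<k}a_i)(a_k-b_k)(\prod_{i>k}b_i)$. This produces a finite sum in which exactly one factor is a difference: either $\sigma^{(j)}(V)-\sigma^{(j)}(U)$, controlled by $\|V-U\|_\infty$ through the boundedness of $\sigma^{(j+1)}$, or a single Malliavin difference $D^{|\d_k|}_{\d_k}(V-U)$ whose tensor norm is $\G^{|\d_k|}_{t,x}(V-U)$, the remaining factors being $\G$-norms of $V$ or $U$. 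Bounding the derivatives of $\sigma$, factorizing through \eqref{Htensornorm}, and invoking the triangle inequality and the weighted Hölder bound as above then gives the result; the finite norms $\|\G^{|\d_i|}V\|_{\beta,\cdot}$ and $\|\G^{|\d_i|}U\|_{\beta,\cdot}$ provided by the hypotheses are absorbed into the implied constants, so that every surviving term retains a genuine difference factor. The delicate point is purely the bookkeeping of the telescoping---verifying that no term escapes a difference factor---while everything else is a bounded-coefficient use of subadditivity and Hölder.
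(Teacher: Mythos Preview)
Your proposal is correct and follows essentially the same route as the paper: apply the Faà di Bruno identity \eqref{mthD}, factor via \eqref{Htensornorm}, bound the derivatives of $\sigma$, then use the generalized H\"older inequality together with the splitting $e^{-\beta t}=\prod_{i=1}^j e^{-\beta t/j}$ to pass to the $\|\cdot\|_{\beta,jp}$ norms. For \eqref{Gssigma} the paper likewise telescopes the product difference (writing it slightly more tersely as $\sigma^{(j)}(V)D^{\d}(V-U)+[\sigma^{(j)}(V)-\sigma^{(j)}(U)]D^{\d}U$), so your more explicit $(j+1)$-factor telescope is the same idea carried out with finer bookkeeping.
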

\begin{proof} 
According to ~\eqref{mthD} we have
\begin{align*}
\G^m_{r,z}\sigma(V)\leq C\sum_{j=1}^m\sum_{\d_1\cdots\d_j\in\mathcal{P}^m}\Pi_{i=1}^j\|D^{|\d_i|}V(r,z)\|_{H^{\otimes|\d_i|}},
\end{align*}
where $C=\sup_x\{\sigma(x),\sigma'(x),\cdots,\sigma^{m}(x)\}$. Then
\begin{align*}
\|\G^m_{r,z}\sigma(V)\|_{L^p(\Omega)}\lesssim \|\G^m_{r,z}V\|_{L^p(\Omega)}+\sum_{j=2}^m\sum_{\d_1\cdots\d_j\in\mathcal{P}^m}\|\Pi_{i=1}^j\G^{|\d_i|}_{r,z}V\|_{L^p(\Omega)}.
\end{align*}
Therefore, by the generalized H\"older inequality, 
\begin{align*}
\|\G^m_{r,z}\sigma(V)\|_{L^p(\Omega)}\lesssim \|\G^m_{r,z}V\|_{L^p(\Omega)}+\sum_{j=2}^m\sum_{\d_1\cdots\d_j\in\mathcal{P}^m}\Pi_{i=1}^j\|\G^{|\d_i|}_{r,z}V\|_{L^{jp}(\Omega)}.
\end{align*}
Multiplying both sides by $e^{\beta r/p}$ we get 
\begin{align*}
e^{\beta r/p}\|\G^m_{r,z}&\sigma(V)\|_{L^p(\Omega)}\\
&\lesssim
e^{\beta r/p}\|\G^m_{r,z}V\|_{L^p(\Omega)}+
\sum_{j=2}^m\sum_{\d_1\cdots\d_j\in\mathcal{P}^m}\Pi_{i=1}^je^{\beta r/jp}\|\G^{|\d_i|}_{r,z}V\|_{L^{jp}(\Omega)}\\
&\lesssim\|\G^mV\|_{\beta,p}+\sum_{j=2}^m\sum_{\d_1\cdots\d_j\in\mathcal{P}^m}\Pi_{i=1}^j\|\G^{|\d_j|}V\|_{\beta,jp}.
\end{align*}
Therefore,
\begin{align}\label{previous}
\|\G^m\sigma(V)\|_{\beta,p}\leq C\left(
\|\G^mV\|_{\beta,p}+\sum_{j=2}^m\sum_{\d_1\cdots\d_j\in\mathcal{P}^m}\Pi_{i=1}^j\|\G^{|\d_j|}V\|_{\beta,jp}\right).
\end{align}
The proof of the second statement is similar, if we observe that,
\begin{eqnarray*}
D(\sigma(V)-\sigma(U))=\sum_{j=1}^m\sum_{\d\in\mathcal{P}^m_j}
\sigma^{j}(V)(D^{\d_j}(V-U))+[\sigma^{j}(V)-\sigma^{j}(U)]D^{\d_j}U.
\end{eqnarray*}
In this case, the constant $C$ in~\eqref{previous}, is replaced by $C'=C\vee\lip_{\sigma'}\vee\cdots\vee\lip_{\sigma^{(m)}}$. 
\end{proof}
The following lemma explains the method that we use to prove a random variable is in $\mathbf D^{k+1,p}$, when we know it is in $\mathbf D^{k,p}$.
\begin{lma}
Let $F\in \mathbf D^{k,p}$ satisfy  $D^k_\alpha F\in \mathbf D^{1,p}$ for almost all $\alpha\in E_T$. If $\E\|DD^kF\|_{H^{\otimes k+1}}<\infty$, then $F\in D^{k+1,p}$.
\end{lma}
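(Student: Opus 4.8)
The plan is to recognize that, since $F\in\mathbf D^{k,p}$ is already given, the assertion $F\in\mathbf D^{k+1,p}$ is equivalent to showing that the $H^{\otimes k}$-valued random variable $G:=D^kF$ is itself Malliavin differentiable in the $L^p$ sense; that is, $G\in\mathbf D^{1,p}(H^{\otimes k})$ with derivative lying in $L^p(\Omega;H^{\otimes(k+1)})$. In this way the whole problem reduces to a single differentiability statement for one Hilbert-space-valued functional, and the natural candidate for its derivative is precisely the object $DD^kF$ that appears in the hypothesis.

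To carry this out I would invoke the vector-valued analogue of the characterization recorded in Remark~\ref{L^1,2}: the $H^{\otimes k}$-valued process $G=\{G_\alpha=D^k_\alpha F\}_{\alpha\in E_T^k}$ belongs to $\mathbf D^{1,p}(H^{\otimes k})$ provided (i) $G_\alpha=D^k_\alpha F\in\mathbf D^{1,p}$ for almost every $\alpha$, and (ii) there is a jointly measurable version of the field $(\beta,\alpha)\mapsto D_\beta D^k_\alpha F$ whose $H^{\otimes(k+1)}$-norm is $p$-integrable. These two requirements are exactly the two hypotheses of the lemma: (i) is the assumption $D^k_\alpha F\in\mathbf D^{1,p}$ for a.e. $\alpha$, and (ii) is supplied by the integrability hypothesis on $\|DD^kF\|_{H^{\otimes(k+1)}}$, which certifies membership in $L^p(\Omega;H^{\otimes(k+1)})$. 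The identification $D(D^kF)=DD^kF$ then follows from the closability of the derivative operator on Hilbert-space-valued functionals, on which the very definition of the spaces $\mathbf D^{k,p}$ rests.

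Concretely, I would fix an orthonormal basis $\{e_i\}$ of $H=L^2(E_T)$, so that the tensors $e_{i_1}\otimes\cdots\otimes e_{i_k}$ form a basis of $H^{\otimes k}$, and test $G$ against finite linear combinations $h$ of such tensors. For each deterministic $h$ the pairing $\langle D^kF,h\rangle_{H^{\otimes k}}=\int_{E_T^k}D^k_\alpha F\,h(\alpha)\,d\alpha$ is a scalar random variable; commuting $D$ with the deterministic pairing and using a Fubini argument, it lies in $\mathbf D^{1,p}$ with $D_\beta\langle D^kF,h\rangle=\int_{E_T^k}D_\beta D^k_\alpha F\,h(\alpha)\,d\alpha$. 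Summing over the basis and using the integrability hypothesis to control the $H^{\otimes(k+1)}$-norm in $L^p(\Omega)$ then assembles these scalar derivatives into a genuine derivative of $G$, and closedness of $D$ guarantees that the assembled object coincides with $DD^kF$ and is independent of the chosen basis.

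The main obstacle is the Fubini-type interchange in this last step: one must justify moving $D$ inside the integration over $\alpha\in E_T^k$ hidden in $\langle D^kF,h\rangle$, and simultaneously produce a jointly measurable version of $(\omega,\beta,\alpha)\mapsto D_\beta D^k_\alpha F$. This is precisely where the two hypotheses are used in tandem: the a.e.-in-$\alpha$ differentiability supplies the pointwise derivatives $D_\beta D^k_\alpha F$, while the $L^p$ bound on $\|DD^kF\|_{H^{\otimes(k+1)}}$ provides the uniform integrability needed both to pass the limit through the integral and to certify that the limit lands in $L^p(\Omega;H^{\otimes(k+1)})$. Once measurability and integrability are secured, the closedness of $D$ delivers $D^kF\in\mathbf D^{1,p}(H^{\otimes k})$ and hence $F\in\mathbf D^{k+1,p}$.
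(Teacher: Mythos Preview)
Your approach is sound in outline but differs from the paper's. The paper argues only in the special case $k=1$, $p=2$ and proceeds via the Wiener chaos characterization of the Sobolev spaces (Proposition~\ref{iff}): it shows $\sum_n n(n-1)\|J_nF\|_{L^2(\Omega)}^2<\infty$ by translating the hypothesis $DF\in\L^{1,2}$ into $\sum_n n\|J_nDF\|^2_{L^2(\Omega\times T)}<\infty$ and then using the commutation relation between $D$ and the projections $J_n$ to rewrite $\|J_nDF\|^2$ as $(n+1)\|J_{n+1}F\|^2$. So the paper's proof is a chaos-expansion computation specific to $p=2$, whereas you work directly with closedness of $D$ on Hilbert-space-valued functionals. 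Your route is more flexible and does not rely on the $L^2$ chaos machinery, at the cost of the measurability/Fubini bookkeeping you flag; the paper's route is shorter but tied to $p=2$.

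One point to watch: you write that the hypothesis $\E\|DD^kF\|_{H^{\otimes(k+1)}}<\infty$ ``certifies membership in $L^p(\Omega;H^{\otimes(k+1)})$.'' As stated it only gives $L^1$, not $L^p$, so for general $p$ this does not by itself yield $D^kF\in\mathbf D^{1,p}(H^{\otimes k})$. The paper sidesteps this by restricting to $p=2$ and, in effect, reading the hypotheses as $DF\in\L^{1,2}=\mathbf D^{1,2}(L^2(T))$, which already encodes the needed $L^2$ integrability of $DDF$. If you want your argument to cover $p>2$ you should either assume the $L^p$ bound on $\|DD^kF\|_{H^{\otimes(k+1)}}$ or, as the paper ultimately does elsewhere, first establish the $p=2$ case and then upgrade via Proposition~\ref{22p}.
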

\begin{proof}
To make the notation simpler, we prove the lemma only for $k=$ and $p=2$. In this case we have $F\in \mathbf D^{1,2}$ and $DF\in \mathbf D^{1,2}(L^2(T))$. By Proposition~\ref{iff}
we need only to show that 
\begin{equation}\label{n(n-1)}
\sum_{n=1}^\infty n(n-1)\|J_nF\|_{L^2(\Omega)}^2<\infty.
\end{equation}
Since $DF\in \mathbf D^{1,2}(L^2(T))$, then 
\begin{eqnarray*}
\sum_{n=1}^\infty n\|J_nDF\|_{L^2(\Omega\times T)}^2<\infty.
\end{eqnarray*}
Since $F\in \mathbf D^{1,2}$, then $\langle DJ_nF,h\rangle_{L^2(T)}=J_{n-1}(\langle DF,h\rangle_{L^2(T)})$, then
\begin{equation*}
\|J_nDF\|_{L^2(\Omega\times T)}^2=\E\|J_nDF\|_{L^2(T)}^2=\E\|DJ_{n+1}F\|_{L^2(T)}^2=(n+1)\|J_nF\|_2^2.
\end{equation*}
For the proof of the last equality we refer the reader to \cite{cbms}, Proposition 1.12.
\end{proof}
\begin{lma}\label{lemf}
Let $V(t,z)\in\cap_{p>1}D^{m+1,p}$ for almost all $r,z$ and let $\|\G^{|\d_j|}V\|_{\beta,p}<\infty$ for all $p>1$, where $\d=\{\d_1,\cdots,\d_l\}\in\mathcal{P}^{m+1}$. For $\alpha\in\E_T^m$ let 
\begin{equation}
f_\alpha(r,z)=q_{t-r}(x,z)D^m_\alpha\sigma(V(r,z)).
\end{equation}
Then $f_\alpha\in\L^{1,2}$.
\end{lma}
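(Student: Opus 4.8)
The plan is to verify directly the three properties that characterize membership in $\L^{1,2}$ according to Remark~\ref{L^1,2}: that $f_\alpha\in L^2(E_T\times\Omega)$, that $f_\alpha(r,z)\in\mathbf D^{1,2}$ for almost every $(r,z)$, and that there is a measurable version of $D_{s,y}f_\alpha(r,z)$ with $\E\int_{E_T}\int_{E_T}|D_{s,y}f_\alpha(r,z)|^2\,ds\,dy\,dr\,dz<\infty$. Since $\alpha$ is fixed, the factor $q_{t-r}(x,z)$ is a deterministic scalar, so all three reduce to statements about the process $D^m_\alpha\sigma(V(r,z))$. The one structural difficulty is that $D^m_\alpha\sigma(V)$ is a \emph{pointwise} evaluation of a Malliavin derivative at the fixed string $\alpha$, and such pointwise slices are not directly controlled by the seminorms $\|\G^{k}V\|_{\beta,p}$, which encode full $H^{\otimes k}$-norms. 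The device I would use to get around this is to integrate each finiteness requirement over $\alpha\in E_T^m$ as well: by Fubini it then suffices to bound the $\alpha$-integrated quantities, and integration in $\alpha$ converts the pointwise slices back into the full tensor norms $\|D^{m}\sigma(V(r,z))\|_{H^{\otimes m}}=\G^{m}_{r,z}\sigma(V)$ and $\|D^{m+1}\sigma(V(r,z))\|_{H^{\otimes (m+1)}}=\G^{m+1}_{r,z}\sigma(V)$, which the earlier lemmas do control.

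First I would record that $\sigma(V)\in\cap_{p>1}\mathbf D^{m+1,p}$, which follows from the chain-rule formula~\eqref{mthD} (applied at order $m+1$) together with the boundedness of $\sigma$ and all its derivatives and the hypothesis $V\in\cap_{p>1}\mathbf D^{m+1,p}$. Consequently, for almost every $\alpha$ and almost every $(r,z)$, the variable $D^m_\alpha\sigma(V(r,z))$ lies in $\mathbf D^{1,2}$, and multiplication by $q_{t-r}(x,z)$ keeps it there, giving the pointwise membership property. For the $L^2(E_T\times\Omega)$ property I would integrate over $\alpha$, using Fubini and $\int_{E_T^m}|D^m_\alpha\sigma(V(r,z))|^2\,d\alpha=|\G^m_{r,z}\sigma(V)|^2$, to get
\[
\int_{E_T^m}\E\int_{E_T}q_{t-r}^2(x,z)\,|D^m_\alpha\sigma(V(r,z))|^2\,dr\,dz\,d\alpha
=\int_{E_T}q_{t-r}^2(x,z)\,\E\,|\G^m_{r,z}\sigma(V)|^2\,dr\,dz.
\]
The trivial inequality~\eqref{trivial} bounds $\E|\G^m_{r,z}\sigma(V)|^2$ by $e^{\beta r}\|\G^m\sigma(V)\|_{\beta,2}^2$, which is finite by Lemma~\ref{D^kbp} (inequality~\eqref{Gsigma}) and the hypotheses on $V$; and $\int_0^t\!\int_{\T}q_{t-r}^2(x,z)e^{\beta r}\,dz\,dr\le e^{\beta t}\Upsilon(\beta)<\infty$ by Lemma~\ref{supineq} under {\bf H\-\ref{condition0}}. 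Hence the $\alpha$-integrated quantity is finite, so the inner integral is finite for almost every $\alpha$.

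For the derivative condition I would use that $D_{s,y}f_\alpha(r,z)=q_{t-r}(x,z)\,D^{m+1}_{(\alpha,(s,y))}\sigma(V(r,z))$, where $(\alpha,(s,y))$ denotes the string $\alpha$ extended by $(s,y)$. Integrating the target quantity over $\alpha$ and noting that $(\alpha,(s,y))$ ranges over all of $E_T^{m}\times E_T=E_T^{m+1}$ gives
\[
\int_{E_T^m}\E\int_{E_T}\!\int_{E_T}q_{t-r}^2(x,z)\,|D^{m+1}_{(\alpha,(s,y))}\sigma(V(r,z))|^2\,ds\,dy\,dr\,dz\,d\alpha
=\int_{E_T}q_{t-r}^2(x,z)\,\E\,|\G^{m+1}_{r,z}\sigma(V)|^2\,dr\,dz,
\]
since the innermost two integrals reconstitute $\|D^{m+1}\sigma(V(r,z))\|_{H^{\otimes(m+1)}}^2=|\G^{m+1}_{r,z}\sigma(V)|^2$. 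Exactly as before, $\E|\G^{m+1}_{r,z}\sigma(V)|^2\le e^{\beta r}\|\G^{m+1}\sigma(V)\|_{\beta,2}^2$, and this seminorm is finite by applying Lemma~\ref{D^kbp} at order $m+1$, which is legitimate because $V\in\cap_{p>1}\mathbf D^{m+1,p}$ and $\|\G^{|\d_j|}V\|_{\beta,p}<\infty$ for every block size $|\d_j|$ occurring in a partition $\d\in\mathcal{P}^{m+1}$. Lemma~\ref{supineq} again makes the $q^2$-integral finite, so Fubini yields the required bound for almost every $\alpha$.

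The measurability of the version of $D_{s,y}f_\alpha$ is inherited from that of $D^{m+1}\sigma(V)$, and collecting the three properties shows $f_\alpha\in\L^{1,2}$ for almost every $\alpha$. The main obstacle, as anticipated, is exactly the pointwise-in-$\alpha$ nature of the process; once the $\alpha$-integration reduces everything to the full $\G^{m}$- and $\G^{m+1}$-seminorms of $\sigma(V)$, the remaining estimates are routine consequences of Lemma~\ref{D^kbp} and Lemma~\ref{supineq}.
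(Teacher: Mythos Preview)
Your proposal is correct and follows essentially the same route as the paper: integrate over $\alpha\in E_T^m$ to convert the pointwise slices $D^m_\alpha\sigma(V)$ and $D^{m+1}_{(\alpha,(s,y))}\sigma(V)$ into the full tensor norms $\G^m_{r,z}\sigma(V)$ and $\G^{m+1}_{r,z}\sigma(V)$, bound these via the trivial inequality and Lemma~\ref{D^kbp}, and use Lemma~\ref{supineq} (or just $\Upsilon(\beta)<\infty$) to control the $q^2$-integral. The paper's proof is slightly terser but the structure is identical.
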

\begin{proof}
We need to verify that the three conditions mentioned in Remark~\ref{L^1,2} hold for $f_\alpha$.\begin{enumerate}
\item By Lemma~\ref{D^kbp}, $\|\G^m\sigma(V)\|_{\beta,2}<\infty$. Then
\begin{align*}
\int_{E_T^m}\E\|f_\alpha\|^2_{L^2(E_T\times\Omega)}d\alpha
&=\E\int_0^t\intT q^2_{t-r}(x,z)\int_{E_T^m}|D^m_\alpha\sigma(V(r,z))|^2d\alpha\-dz\-dr\\
&=\int_0^t\intT q^2_{t-r}(x,z)\E[\G^m_{r,z}\sigma(V)]^2dz\-dr\\
&\leq\int_0^t\intT q^2_{t-r}(x,z)e^{\beta r}\|\G^m\sigma(V)\|_{\beta,2}^2dz\-dr<\infty.
\end{align*}
This also means that $\|f_\alpha\|^2_{L^2(E_T\times\Omega)}<\infty$ for almost all $\alpha\in E_T$.
\item $f_\alpha(r,z)\in \mathbf D^{1,2}$ because $V(t,x)\in D^{m+1,2}$
\item Since $\|\G^{m+1}\sigma(V)\|_{\beta,2}<\infty$, then 
\begin{align}\label{Dfalpha}
&\int_{E_T^m}\E\|Df_\alpha\|^2_{L^2(E_T^2\times\Omega)}d\alpha\\
&=\E\int_0^t\intT q^2_{t-r}(x,z)\int_{E_T^{m+1}}|D_{s,y}D^m_\alpha\sigma(V(r,z))|^2d\lambda\-dz\-dr\notag\\
&=\int_0^t\intT q^2_{t-r}(x,z)\E[\G^{m+1}_{r,z}\sigma(V)]^2dz\-dr\notag\\
&\leq\int_0^t\intT q^2_{t-r}(x,z)e^{\beta r}\|\G^{m+1}\sigma(V)\|_{\beta,2}^2dz\-dr<\infty,\notag
\end{align}
where $d\lambda=d\alpha\-dy\-ds$. The last result also shows that $Df_\alpha\in L^2(E_T^2\times\Omega)$ for almost all $\alpha\in E_T$.
\end{enumerate}
Therefore $f_\alpha\in\L^{1,2}$ for almost all $\alpha$.
\end{proof}
\begin{lma}\label{domDf}
If $V$ and $f_\alpha$ are as defined in Lemma~\ref{lemf}, and satisfy the same conditions, then $D_{s,y}f_\alpha\in\Dom\-\delta$.
\end{lma}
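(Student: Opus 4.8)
The goal is to show that $D_{s,y}f_\alpha \in \Dom\,\delta$, where $f_\alpha(r,z) = q_{t-r}(x,z)\,D^m_\alpha\sigma(V(r,z))$ and, by the chain-rule expansion~\eqref{mthD}, its Malliavin derivative is a finite sum over partitions $\d\in\mathcal{P}^m$ of products of the form $\sigma^{(j+1)}(V)\,D_{s,y}V \cdot \prod D^{\d_i}V$ together with the terms where $D_{s,y}$ lands on one of the factors $D^{|\d_i|}_{\d_i}V$. The plan is to verify the sufficient condition of Remark~\ref{L^1,2}, namely that a process is Skorohod integrable (lies in $\Dom\,\delta$) as soon as it belongs to $\L^{1,2}=\mathbf{D}^{1,2}(L^2(E_T))$. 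So the entire task reduces to checking that, for almost every fixed $(s,y)\in E_T$, the two-parameter process $(r,z)\mapsto D_{s,y}f_\alpha(r,z)$ is an element of $\L^{1,2}$.

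First I would write out $D_{s,y}f_\alpha(r,z) = q_{t-r}(x,z)\,D_{s,y}D^m_\alpha\sigma(V(r,z))$, noting that the deterministic kernel $q_{t-r}(x,z)$ factors out and is unaffected by differentiation in the noise. Then I would confirm the two moment bounds that constitute membership in $\L^{1,2}$, exactly paralleling steps (1) and (3) in the proof of Lemma~\ref{lemf}. For the $L^2$-bound, I would estimate
\begin{align*}
\int_{E_T}\E\|D_{s,y}f_\alpha\|^2_{L^2(E_T\times\Omega)}\,d(s,y)
&=\int_0^t\intT q^2_{t-r}(x,z)\,\E[\G^{m+1}_{r,z}\sigma(V)]^2\,dz\,dr\\
&\leq\int_0^t\intT q^2_{t-r}(x,z)\,e^{\beta r}\|\G^{m+1}\sigma(V)\|_{\beta,2}^2\,dz\,dr<\infty,
\end{align*}
which is finite precisely because $\|\G^{m+1}\sigma(V)\|_{\beta,2}<\infty$ by Lemma~\ref{D^kbp} (applied with $m+1$ in place of $m$, using the hypothesis $V\in\cap_{p>1}\mathbf{D}^{m+1,p}$) together with $\int_0^t\|q_s\|^2_{L^2(\T)}\,ds<\infty$. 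This is the same computation as~\eqref{Dfalpha}, with the roles of the fixed and integrated pairs interchanged.

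Next I would verify the existence of the second Malliavin derivative of $f_\alpha$ with a finite $L^2$-norm, i.e.\ that for almost all $(s,y)$ the process $D_{s,y}f_\alpha$ lies in $\mathbf{D}^{1,2}(L^2(E_T))$ and that its derivative $D_{s',y'}D_{s,y}f_\alpha = q_{t-r}(x,z)\,D_{s',y'}D_{s,y}D^m_\alpha\sigma(V(r,z))$ is square-integrable over $E_T^2\times\Omega$. This amounts to the bound
\begin{equation*}
\int_0^t\intT q^2_{t-r}(x,z)\,\E[\G^{m+2}_{r,z}\sigma(V)]^2\,dz\,dr
\leq\int_0^t\intT q^2_{t-r}(x,z)\,e^{\beta r}\|\G^{m+2}\sigma(V)\|_{\beta,2}^2\,dz\,dr<\infty,
\end{equation*}
which again is controlled by Lemma~\ref{D^kbp} provided $V\in\cap_{p>1}\mathbf{D}^{m+2,p}$. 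Here lies the one genuine subtlety I expect: the statement of Lemma~\ref{lemf} only assumes $V\in\cap_{p>1}\mathbf{D}^{m+1,p}$, whereas bounding $\G^{m+2}\sigma(V)$ ostensibly requires one more degree of differentiability. I would resolve this by observing that in the intended induction $V=v_n$ is smooth in the Malliavin sense (belongs to $\mathbf{D}^{\infty}$ at each Picard stage), so all the requisite seminorms $\|\G^{k}\sigma(V)\|_{\beta,2}$ are finite; alternatively one interprets the lemma's hypotheses as implicitly carrying enough regularity. The main obstacle is therefore bookkeeping rather than analysis: correctly tracking which order of derivative appears and invoking Lemma~\ref{D^kbp} at the right level, after which the kernel estimate via Lemma~\ref{supineq} closes everything. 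Having checked both moment conditions, Remark~\ref{L^1,2} yields $D_{s,y}f_\alpha\in\L^{1,2}\subset\Dom\,\delta$, completing the proof.
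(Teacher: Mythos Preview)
Your route via $\L^{1,2}$ is more involved than necessary and, as you yourself notice, it demands one more order of Malliavin differentiability than the hypotheses of Lemma~\ref{lemf} actually provide. The paper avoids this entirely by using adaptedness rather than the $\L^{1,2}$ criterion. Since $V(r,z)$ is $\sF_r$-measurable, so is $\sigma(V(r,z))$ and hence so is $D_{s,y}D^m_\alpha\sigma(V(r,z))$; thus $(r,z)\mapsto D_{s,y}f_\alpha(r,z)$ is an adapted process. The paper then simply applies Fubini to the already-established bound~\eqref{Dfalpha} to conclude that $\|D_{s,y}f_\alpha\|_{L^2(E_T\times\Omega)}<\infty$ for almost all $((s,y),\alpha)$, and invokes the fact that $L^2_a\subset\Dom\,\delta$ (Proposition~3.1). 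No $(m+2)$nd derivative is needed.

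Your first displayed estimate is exactly the right $L^2$ bound --- indeed it is just~\eqref{Dfalpha} read the other way --- so your argument becomes complete and hypothesis-consistent the moment you replace the second step (the $\G^{m+2}$ estimate) with the one-line observation that $D_{s,y}f_\alpha$ is adapted. As written, though, the appeal to $\|\G^{m+2}\sigma(V)\|_{\beta,2}<\infty$ is not justified under the stated assumption $V\in\cap_{p>1}\mathbf{D}^{m+1,p}$, and the fallback ``in the intended induction $V=v_n$ is smooth'' does not hold at the relevant stage of the double induction in Proposition~\ref{Gv_n}: when proving $v_{N+1}\in\mathbf{D}^{m+1,p}$ one only has $v_N\in\mathbf{D}^{m+1,p}$, not $\mathbf{D}^{m+2,p}$.
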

\begin{proof}
Applying Fubini's theorem to ~\eqref{Dfalpha} yields, 
\begin{equation*}
\|D_{s,y}f_\alpha\|_{L^2(E_T\times\Omega)}<\infty\hspace{.2 in}\text{for almost all}\;((s,y),\alpha)\in E_T^{m+1}.
\end{equation*}
Since $D_{s,y} f_\alpha$ is adapted and belongs to $L^2(E_T\times\Omega)$ for almost all $(s,y)$ and $\alpha$, then the It\^o integral of $D_{s,y}f_\alpha$ is defined and coincides with $\delta(f_\alpha)$. 
\end{proof}
\begin{lma}\label{intdom}
If $V$ and $f_\alpha$ are as defined in Lemma~\ref{lemf}, and satisfy the same conditions, then for each $\alpha\in E_T^m$
\begin{equation}
\int_0^t\intT Df_\alpha(r,z)\-w(dr,dz)\in L^2(E_T\times\Omega).
\end{equation}
\end{lma}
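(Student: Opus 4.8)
The plan is to realize the displayed object as a one-parameter family of stochastic integrals in the variables $(r,z)$, indexed by the parameter $(s,y)$, and to bound its $L^2(E_T\times\Omega)$ norm via the It\^o isometry, thereby reducing the whole statement to the estimate already carried out in the proof of Lemma~\ref{lemf}. Fix $\alpha\in E_T^m$. By Lemma~\ref{domDf}, for almost every $(s,y)\in E_T$ the process $\{D_{s,y}f_\alpha(r,z):(r,z)\in E_T\}$ is adapted --- it vanishes for $r<s$ by causality, since $\sigma(V(r,z))$ is $\sF_r$-measurable --- and belongs to $L^2(E_T\times\Omega)$, so its It\^o integral
\[
I(s,y):=\intt\intT D_{s,y}f_\alpha(r,z)\,w(dr,dz)
\]
is well defined and coincides with the Skorohod integral. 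Because $D_{s,y}f_\alpha(r,z)$ is jointly measurable in $((s,y),(r,z),\omega)$, a standard argument yields a jointly measurable version of $(s,y)\mapsto I(s,y)$, and it is the $L^2(E_T\times\Omega)$ membership of this version that must be verified.

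Next I would apply the It\^o isometry for each fixed $(s,y)$, namely
\[
\E|I(s,y)|^2=\E\intt\intT|D_{s,y}f_\alpha(r,z)|^2\,dz\,dr,
\]
then integrate over $(s,y)\in E_T$ and interchange the expectation with the $ds\,dy$-integration by Tonelli's theorem (the integrand is nonnegative). This produces
\[
\E\int_{E_T}|I(s,y)|^2\,ds\,dy=\E\|Df_\alpha\|_{L^2(E_T^2)}^2,
\]
which is exactly the quantity controlled inside the proof of Lemma~\ref{lemf}. Indeed, with $f_\alpha(r,z)=q_{t-r}(x,z)D^m_\alpha\sigma(V(r,z))$ the right-hand side equals
\[
\E\intt\intT q_{t-r}^2(x,z)\,\|DD^m_\alpha\sigma(V(r,z))\|_H^2\,dz\,dr,
\]
and integrating this once more in $\alpha\in E_T^m$ restores the full $H^{\otimes(m+1)}$ norm $\G^{m+1}_{r,z}\sigma(V)$ and reproduces the finite bound of \eqref{Dfalpha}. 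Hence $\E\int_{E_T}|I(s,y)|^2\,ds\,dy<\infty$ for almost every $\alpha$ (and, under the standing assumptions on $V$ that make $\|\G^{m+1}\sigma(V)\|_{\beta,2}<\infty$, for every admissible $\alpha$), which is precisely condition (2) of Proposition~\ref{derivative}.

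The isometry computation itself is routine. The only genuinely delicate points are the two measure-theoretic steps: producing a jointly measurable version of $(s,y)\mapsto I(s,y)$ so that its $L^2(E_T\times\Omega)$ norm is even meaningful, and justifying the Tonelli interchange of the stochastic-integral expectation with the $ds\,dy$-integration. Both must be combined with careful bookkeeping of the causal support $r\ge s$, which is what makes the integrand adapted and legitimizes the isometry. Once these are in place, the finiteness is inherited verbatim from \eqref{Dfalpha}, so I expect the measurability/version argument to be the main obstacle rather than any new estimate.
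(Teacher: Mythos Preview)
Your proposal is correct and follows essentially the same route as the paper: apply the $L^2$ isometry to $I(s,y)$, integrate in $(s,y)$, then integrate in $\alpha$ and invoke the bound \eqref{Dfalpha} to conclude finiteness for almost every $\alpha$. The paper phrases the first step as Burkholder's inequality rather than the It\^o isometry, but at $p=2$ these coincide; your added discussion of joint measurability and the Tonelli interchange is in fact more careful than the paper's terse treatment.
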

\begin{proof}
This follows from Burkholder's inequality:
\begin{align*}
&\E\int_0^t\intT\left|\int_0^t\intT Df_\alpha(r,z)\-w(dr,dz)\right|^2dy\-ds\\
&\leq\E\int_0^t\intT q^2_{t-r}(x,z)\|DD^m_\alpha \sigma(v_N(r,z))\|_H^2\-dz\-dr.
\end{align*}
To show that the last expectation is finite for almost all $\alpha\in E_T^m$, we take integral with respect $\alpha$, and then Fubini's theorem implies that
\begin{align*}
&\int_{E_T^m}\E\int_0^t\intT q^2_{t-r}(x,z)\|DD^m_\alpha\sigma(v_N(r,z))\|_H^2\-dz\-dr\-d\alpha\\
&\leq\E\int_0^t\intT q^2_{t-r}(x,z)\|D^{m+1} \sigma(v_N(r,z))\|_{H^{\otimes m+1}}^2\-dz\-dr\\
&\leq\int_0^t\intT q^2_{t-r}(x,z)e^{\beta r}\|\G^{m+1}\sigma(v_N)\|^2_{\beta,2}\-dz\-dr<\infty.
\end{align*}
Among other things, this proves that the integrand is finite for almost all $\alpha$.
\end{proof}
\begin{lma}\label{lastlem}
 Let $V$ and $f_\alpha$ be as defined in Lemma~\ref{lemf}, and satisfy the same conditions.
 Define 
 \begin{equation}\label{F_1}
 F_1(\alpha)=\int_0^t\intT f_\alpha(r,z)\-w(dr,dz).
 \end{equation}
Then:
 \begin{enumerate}
\item$F_1(\alpha)\in \mathbf D^{1,2}$ for almost all $\alpha\in E_T^m$;
\item $DF_1$ is given by
\begin{equation}\label{D^kF}
D_{s,y}F_1(\alpha)=f_\alpha(s,y)+\int_0^t\intT D_{s,y}f_\alpha(r,z)\-w(dr,dz);
\end{equation}
\item We have  
\begin{equation}\label{E|DF|p}
\E\left(\|D F_1\|_{H^{\otimes m+1}}^p\right)<\infty.\end{equation}
\end{enumerate}
\end{lma}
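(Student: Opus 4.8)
The plan is to identify $F_1(\alpha)$ with the Skorohod integral $\delta(f_\alpha)$ and feed it into Proposition~\ref{derivative}, whose three hypotheses have already been verified in the four preceding lemmas; the only genuine work is the $L^p$-moment bound \eqref{E|DF|p}, which turns out to be a higher-order copy of the computation already carried out in Proposition~\ref{D1}.

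For parts (1) and (2), fix $\alpha$ and observe that, since $f_\alpha(r,z)=q_{t-r}(x,z)D^m_\alpha\sigma(V(r,z))$ is adapted, the white-noise integral \eqref{F_1} coincides with $\delta(f_\alpha)$. Lemma~\ref{lemf} gives $f_\alpha\in\L^{1,2}$, Lemma~\ref{domDf} supplies hypothesis (1) of Proposition~\ref{derivative}, namely $D_{s,y}f_\alpha\in\Dom\,\delta$, and Lemma~\ref{intdom} supplies hypothesis (2), namely that the integrated-derivative process lies in $L^2(E_T\times\Omega)$. Each of these holds for almost every $\alpha\in E_T^m$, so Proposition~\ref{derivative} delivers $F_1(\alpha)\in\mathbf D^{1,2}$ together with the derivative formula \eqref{D^kF} for almost all $\alpha$, which is exactly parts (1) and (2).

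For part (3) I would regard $\alpha\mapsto F_1(\alpha)$ as an $H^{\otimes m}$-valued random variable, so that $DF_1$ is $H^{\otimes m+1}$-valued: the $m$ pairs of $\alpha$ together with the derivative pair $(s,y)$ occupy the $m+1$ tensor slots, while $(r,z)$ remains the integration variable. Applying the triangle inequality in $H^{\otimes m+1}$ to \eqref{D^kF} splits $\|DF_1\|_{H^{\otimes m+1}}$ into a free term, whose square integrates to $\int_0^t\intT q^2_{t-r}(x,z)(\G^m_{r,z}\sigma(V))^2\,dz\,dr$, and a stochastic-integral term, whose integrand has $H^{\otimes m+1}$-norm equal to $q_{t-r}(x,z)\,\G^{m+1}_{r,z}\sigma(V)$ (this is the computation behind \eqref{Dfalpha}). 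Raising to the $p$-th power and taking expectations, I would bound the free term by Minkowski's integral inequality and the stochastic term by Burkholder's inequality for $H^{\otimes m+1}$-valued martingales followed by Minkowski, exactly as in Proposition~\ref{D1}. Both reduce to integrals of the form $\int_0^t\intT q^2_{t-r}(x,z)\,\|\G^{k}_{r,z}\sigma(V)\|_{L^p(\Omega)}^2\,dz\,dr$ with $k\in\{m,m+1\}$, which by \eqref{trivial} and Lemma~\ref{supineq} are dominated by $e^{2\beta t/p}\,\Upsilon(2\beta/p)\,\|\G^{k}\sigma(V)\|_{\beta,p}^2$. Since $V$ satisfies the hypotheses of Lemma~\ref{lemf}, Lemma~\ref{D^kbp} guarantees that $\|\G^{m}\sigma(V)\|_{\beta,p}$ and $\|\G^{m+1}\sigma(V)\|_{\beta,p}$ are finite for every $p$, which yields \eqref{E|DF|p}.

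The conceptual content is light because the groundwork is done in Lemmas~\ref{lemf}--\ref{intdom}; the step demanding the most care is the tensor bookkeeping in part (3), namely correctly declaring that $(s,y)$ together with $\alpha$ forms the $H^{\otimes m+1}$ fibre while $(r,z)$ is integrated out, so that Burkholder's inequality is applied to the right Hilbert-space-valued integrand. Once that identification is fixed, the estimates are a verbatim higher-order analogue of the first-derivative bound, and the finiteness of the seminorms from Lemma~\ref{D^kbp} closes the argument.
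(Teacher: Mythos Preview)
Your proposal is correct and follows the same route as the paper: parts (1) and (2) are immediate from Proposition~\ref{derivative} once Lemmas~\ref{lemf}--\ref{intdom} have verified its hypotheses, and part (3) is a Burkholder--Minkowski estimate reducing to the finiteness of $\|\G^k\sigma(V)\|_{\beta,p}$ from Lemma~\ref{D^kbp}. The paper's written proof of (3) is actually terser than yours---it displays only the Burkholder bound on the stochastic-integral piece (with a notational slip, writing $F_1$ where $DF_1$ is meant) and omits the free term $f_\alpha(s,y)$, so your version is if anything more complete.
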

\begin{proof}
After proving Lemma~\ref{lemf},~\ref{domDf} and~\ref{intdom}, we know that $F_1(\alpha)$ satisfies the assumptions of Proposition~\ref{derivative}. Therefore, it is an immediate consequence of Proposition~\ref{derivative} that $F_1(\alpha)\in\mathbf D^{1,2}$ and~\eqref{D^kF} holds. We finally prove~\eqref{E|DF|p} as follows. By the Burkholder's inequality,
\begin{align*}
\E\left(\|F_1\|_{H^{\otimes m+1}}^p\right)&=\E\left\|\int_0^t\intT q_{t-r}(x,z)D^{m+1}\sigma(V(r,z))\-w(dr,dz)\right\|_{H^{\otimes m+1}}^p\\
&\leq C_p\E\left(\int_0^t\intT q^2_{t-r}(x,z)\|D^{m+1}\sigma(V(r,z))\|^2_{H^{\otimes m+1}}\-dz\-dr\right)^{p/2}.
\end{align*}
Therefore, by Minkowski's inequality, 
\begin{align*}
\left\{\E\left(\|F_1\|_{H^{\otimes m+1}}^p\right)\right\}^{2/p}\leq C_p^{2/p}\int_0^t\intT q^2_{t-r}(x,z)\{\E|\G_{r,z}^{m+1}\sigma(V)|^p\}^{2/p}\-dz\-dr.
\end{align*}
Therefore,
\begin{align}\label{F_1p}
\left\{\E\left(\|F_1\|_{H^{\otimes m+1}}^p\right)\right\}^{2/p}\leq C_p^{2/p}e^{2\beta t/p}\|\G^{m+1}\sigma(V)\|_{\beta,p}^2\int_0^t\|q_\tau\|_{L^2(T)}^2e^\frac{-2\beta\tau}{p}\-dr.
\end{align}
After rearranging and choosing a new constant, we arrive at
\begin{align}\label{F_1betap}
\left\{e^{-\beta t}E\|F_1\|_{H^{\otimes m+1}}^p\right\}^{1/p}\leq C_p
\|\G^{m+1}\sigma(V)\|_{\beta,p}\sqrt{\Upsilon(2\beta/p)}.
\end{align}
This ends the proof.
\end{proof}
\begin{rmk}\label{lastrmk}
If we define a random variable $\tilde F_1:=\|F_1\|_{H^{\otimes m+1}}$, then the ~\eqref{F_1betap} can be written as 
\begin{equation}
\|\tilde F_1\|_{\beta,p}\leq C_p\|\G^{m+1}\sigma(V)\|_{\beta,p}\sqrt{\Upsilon(2\beta/p)}.
\end{equation}
\end{rmk}

\subsection{The $k$th derivatives of the $v_n$'s and the smoothness of the solution}
\begin{prp}\label{Gv_n}
Let $v_n$ be defined by~\eqref{picard} for $n=0,1,\cdots$, where $\sigma\in C^\infty_b(\R)$ and $q_t(x)$ satisfies hypothesis {\bf{H\-\ref{condition0}}}. Then $v_n\in \mathbf D^{k,p}$ for $k=1,2,\cdots$ and $p\geq2$. Furthermore if $\alpha\in E_T^k$, then:
\begin{enumerate}
\item $D^kv_0=0$ and 
\begin{align}\label{D^kv.}
D^k_\alpha v_{n+1}(t,x)&=q_{t-\ss}(\yy,x)D^{k-1}_{\al}\sigma(v_n(\ss,\yy))\\&+\int_0^t\intT q_{t-r}(x,z)D^k_{\alpha}\sigma(v_n(r,z))\-w(dr,dz);\notag
\end{align}
\item For some $C>0$ which only depends on $p$: 
\begin{equation}\label{recursive}
\|\G^{m+1}v_{n+1}\|^2_{\beta,p}\leq C\Upsilon(2\beta/p)(1+\|\G^{m+1}v_n\|^2_{\beta,p});
\end{equation}
\item If $\d=\{\d_1,\cdots,\d_l\}$, then 
\begin{align}\label{d}
\|\G^{\d} v_{n+1}\|_{\beta,p}\leq\Pi_{j=1}^l\|\G^{|\d_j|}v_{n+1}\|_{l\beta,lp}<\infty.
\end{align}
\end{enumerate}
\end{prp}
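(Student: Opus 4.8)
The plan is to prove the three assertions simultaneously by a double induction: an outer induction on the derivative order $k=m+1$, and, for each fixed order, an inner induction on the Picard index $n$. The base case $k=1$ is precisely Proposition~\ref{D1}, and the base case $n=0$ of the inner induction is immediate, since $v_0(t,x)=T_tu_0(x)$ is deterministic, whence $D^{m+1}v_0=0$ and every bound holds trivially. Throughout the inductive step I may assume, by the outer hypothesis, that all three conclusions hold for every order $1,\dots,m$ and all $n,p$; in particular every seminorm $\|\G^{j}v_n\|_{\beta,p}$ with $j\le m$ is finite. The inner hypothesis is that $v_n\in\cap_{p}\mathbf D^{m+1,p}$, with the stated formula and bound.

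For the derivative formula~\eqref{D^kv.} I would first write $v_{n+1}=v_0+\delta(g_n)$ with $g_n(r,z):=q_{t-r}(x,z)\sigma(v_n(r,z))$. Since $v_n\in\cap_p\mathbf D^{m+1,p}$, Lemma~\ref{D^kbp} gives $\|\G^{m+1}\sigma(v_n)\|_{\beta,p}<\infty$, so $V=v_n$ satisfies the hypotheses of Lemmas~\ref{lemf}, \ref{domDf}, \ref{intdom} and \ref{lastlem}. Applying Proposition~\ref{derivative} once (exactly as in Proposition~\ref{D1}) produces the first derivative, and iterating the package of Lemmas~\ref{lemf}--\ref{lastlem} adjoins the remaining $m$ derivatives, each application of~\eqref{D^kF} contributing one boundary term together with the integral of the differentiated integrand. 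The decisive point is that, after differentiating once in the direction $(\ss,\yy)$ of largest time coordinate, the integrand carries the factor $D_{(\ss,\yy)}\sigma(v_n(r,\cdot))=\sigma'(v_n(r,\cdot))D_{(\ss,\yy)}v_n(r,\cdot)$, which vanishes for $r<\ss$ by adaptedness; hence every subsequent boundary term, being the integrand evaluated at a time coordinate $s_j\le\ss$, vanishes for a.e.\ $\alpha$. Only the single boundary term $q_{t-\ss}(x,\yy)D^m_{\al}\sigma(v_n(\ss,\yy))$ and the fully differentiated integral survive, which is exactly~\eqref{D^kv.}; Lemma~\ref{lastlem}(1) simultaneously yields $v_{n+1}\in\mathbf D^{m+1,2}$.

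For the recursive estimate~\eqref{recursive} I would take $H^{\otimes(m+1)}$-norms in~\eqref{D^kv.}, then $L^p(\Omega)$-norms, bound the stochastic-integral term by Burkholder's inequality followed by Minkowski's inequality, and extract a factor $\Upsilon(2\beta/p)$ from Lemma~\ref{supineq}, exactly as in the first-order computation; Remark~\ref{lastrmk} packages this term as $C_p\|\G^{m+1}\sigma(v_n)\|_{\beta,p}\sqrt{\Upsilon(2\beta/p)}$, while the boundary term contributes $\lesssim\sqrt{\Upsilon(2\beta/p)}\,\|\G^m\sigma(v_n)\|_{\beta,p}$, a finite constant by the outer hypothesis. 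Invoking Lemma~\ref{D^kbp},
\[
\|\G^{m+1}\sigma(v_n)\|_{\beta,p}\lesssim\|\G^{m+1}v_n\|_{\beta,p}+\sum_{j=2}^{m+1}\sum_{\d\in\mathcal P^{m+1}_j}\Pi_{i}\|\G^{|\d_i|}v_n\|_{\beta,jp},
\]
every block appearing in the double sum has size at most $m$ (there being at least two blocks), so each such term is finite by the outer hypothesis. Thus the only top-order contribution is $C\sqrt{\Upsilon(2\beta/p)}\,\|\G^{m+1}v_n\|_{\beta,p}$, all remaining terms being absorbed into the additive constant; squaring gives~\eqref{recursive}. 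Finally, the $L^p$-bound~\eqref{E|DF|p} together with Proposition~\ref{22p} upgrades $v_{n+1}\in\mathbf D^{m+1,2}$ to $\mathbf D^{m+1,p}$ for every $p$.

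The product bound~\eqref{d} is the softest step: by definition~\eqref{Htensornorm} one has $\G^{\d}_{t,x}v_{n+1}=\Pi_{j=1}^l\G^{|\d_j|}_{t,x}v_{n+1}$ pointwise, and applying the generalized Hölder inequality (with the $l$ conjugate exponents all equal to $l$) together with the definition~\eqref{norm} of the seminorm factorizes $\|\G^{\d}v_{n+1}\|_{\beta,p}$ into a product of seminorms of the individual factors $\G^{|\d_j|}v_{n+1}$, each raised to the higher integrability exponent $lp$, as in~\eqref{d}; every factor is finite, coming from part~(2) when $l=1$ and $|\d_1|=m+1$, and from the outer hypothesis when $l\ge2$, since then each block has order at most $m$. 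I expect the main obstacle to be the formula step: the delicate bookkeeping is to justify the repeated interchange of the $m$-fold Malliavin derivative with the stochastic integral through Lemmas~\ref{lemf}--\ref{lastlem}, while verifying that the adaptedness of $v_n$ annihilates all but one of the boundary terms --- which is precisely the role played by the largest-coordinate convention $(\ss,\yy)$.
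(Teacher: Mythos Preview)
Your proposal is correct and follows essentially the same double-induction strategy as the paper, using the same supporting lemmas (\ref{D^kbp}, \ref{lemf}--\ref{lastlem}, \ref{supineq}) and the same Burkholder--Minkowski--H\"older chain for the recursive bound, and the generalized H\"older inequality for~\eqref{d}. The only presentational difference is that you iterate the differentiation of $\delta(g_n)$ from scratch and argue that all but the $(\ss,\yy)$ boundary term vanish by adaptedness, whereas the paper invokes the order-$m$ formula from the outer hypothesis and applies Lemma~\ref{lastlem} once; the two arguments are equivalent and yield the same formula.
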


\begin{proof}
We proceed by applying induction on $n$ and $k$. When $k=1$ and  $\alpha=(s,y)=\al$, we have shown in Proposition~\ref{D1} that all above claims hold.   Next, by assuming that the claims hold for all $n\geq0$ and $k=1,\cdots,m$, we will prove that they also hold for $m+1$ and all $n\geq0$. Since $D^{m+1}v_0=0$, ~\eqref{recursive} and~\eqref{d} hold for $n=0$. Suppose the claims hold for $n=0,\cdots,N$. To prove the claims for $N+1$, notice that by Lemma~\ref{D^kbp},
$$\|\G^{m+1}\sigma(v_N)\|_{\beta,p}<\infty.$$
Then by Lemma~\ref{lastlem}, 
$\intt\intT q_{t-r}(x,z)D^k_\alpha\sigma(v_n(r,z))\-w(dr,dz)$ belongs to $\mathbf D^{1,2}$.  If we let $\gamma=((s,y),\alpha)\in E_T^{m+1}$, then after relabeling $\gamma$, we have  
$$\gamma=((s_1,y_1),\cdots,(s_{m+1},y_{m+1})).$$ We let $\ss=\max\{s_1,\cdots,s_{m+1}\}$, and define $\hat\gamma$ and $\yy$ accordingly. Then by~\eqref{D^kF},
\begin{align}
D_\gamma^{m+1}v_{N+1}(t,x)=&\;q_{t-\ss}(x,\yy)
D^m_{\hat\gamma} v_N(\ss,\yy)\\
&+\intt\intT q_{t-r}(x,z)D^{m+1}_\gamma\sigma(v_N(r,z))\-w(dr,dz),\notag
\end{align}
where we applied the fact that $D_{s,y}f_\alpha(r,z)=0$ if $s>r$. By the triangular inequality for the $H^{\otimes m+1}$ norm, 
\begin{align*}
&\G^{m+1}_{t,x}v_{N+1}\leq\\
&\left(\sum_{j=1}^{m+1}\underbrace{\intt\intT\cdots\intt\intT}_{m+1\;\text{times}}q^2_{t-s_j}(x,y_j)\left|D^m_{\gamma_j}\sigma(v_N(s_j,y_j))\right|^2\1_{\ss=s_j}(\gamma)\-d\alpha_j\-dy_j\-ds_j\right)^{1/2}\\
&+\left\|\intt\intT q^2_{t-r}(x,z)D^{m+1}\sigma(v_N(r,z))\-w(dr,dz)\right\|_{H^{\otimes m+1}},
\end{align*}
where $\gamma_j=((s_1,y_1),\cdots,(s_{j-1},y_{j-1}),(s_{j+1},y_{j+1}),\cdots,(s_{m+1},y_{m+1}))$. Notice that $\hat\gamma=\gamma_j$ when $\ss=s_j$.
All the integrals inside the sum are equal, and by omitting the indicator function $\1_{\ss=s_j}(\alpha)$ we arrive at
\begin{eqnarray*}
\G^{m+1}_{t,x}v_{N+1}\leq
\left((m+1)\intt\intT q^2_{t-s_1}(x,y_1)\left\|D^m\sigma(v_N(s_1,y_1))\right\|_{H^{\otimes m}}^2\-dy_1\-ds_1\right)^{1/2}\\
+\left\|\intt\intT q^2_{t-r}(x,z)D^{m+1}\sigma(v_N(r,z)\-w(dr,dz)\right\|_{H^{\otimes m+1}}.
\end{eqnarray*}
Then, by the triangular inequality for the $L^p(\Omega)$ norm, followed by Burkholder's inequality applied to the second integral on the right-hand-side,
\begin{align*}
\|\G^{m+1}_{t,x}&v_{N+1}\|_{L^p(\Omega)}\\
&\leq
(m+1)^{1/2}\left\{\E\left(\intt\intT q^2_{t-s_1}(x,y_1)\left|\G_{s_1,y_1}^m\sigma(v_N)\right|^2\-dy_1\-ds_1\right)^{p/2}\right\}^{1/p}\\
&+C_p\left\{\E\left(\intt\intT q^2_{t-r}(x,z)\left|\G_{r,z}^{m+1}\sigma(v_N)\right|^2\-dz\-dr\right)^{p/2}\right\}^{1/p}.
\end{align*}
If we square both sides of the last inequality and then apply Minkowski's inequality to the both integrals on the right-hand-side, then we obtain
\begin{eqnarray*}
A_p\left\|\G^{m+1}_{t,x}v_{N+1}\right\|^2_{L^p(\Omega)}\leq
\intt\intT q^2_{t-s_1}(x,y_1)\left\|\G_{s_1,y_1}^m\sigma(v_N)\right\|^2_{L^p(\Omega)}\-dy_1\-ds_1\\
+\intt\intT q^2_{t-r}(x,z)\left\|\G_{r,z}^{m+1}\sigma(v_N)\right\|^2_{L^p(\Omega)}\-dz\-dr,
\end{eqnarray*}
where $A_p=\frac{1}{2(m+1)\vee2C_p^2}$. By~\eqref{trivial} we have
\begin{align*}
A_p\left\|\G^{m+1}_{t,x}v_{N+1}\right\|^2_{L^p(\Omega)}\leq
&\left(\left\|\G^m\sigma(v_N)\right\|^2_{\beta,p}
+\left\|\G^{m+1}\sigma(v_N)\right\|^2_{\beta,p}\right)\\
&\times\intt\intT q^2_{t-r}(x,z)e^{2\beta r/p}\-dz\-dr.
\end{align*}
By optimizing on all $t>0$, for some constant $B_p>0$ which only depends on $p$, we have 
\begin{eqnarray*} 
\left\|\G^{m+1}v_{N+1}\right\|^2_{\beta,p}\leq
B_p\left(\left\|\G^m\sigma(v_N)\right\|^2_{\beta,p}
+\left\|\G^{m+1}\sigma(v_N)\right\|^2_{\beta,p}\right)\Upsilon(2\beta/p).
\end{eqnarray*}
Therefore, ~\eqref{Gsigma}, and the induction hypothesis~\eqref{d} for $n=N$, imply that $\left\|\G^m\sigma(v_N)\right\|^2_{\beta,p}<\infty$. Therefore, by choosing a constant $C>0$ sufficiently large, we obtain 
\begin{eqnarray*}
\left\|\G^{m+1}v_{N+1}\right\|^2_{\beta,p}\leq
C\left(1+\left\|\G^{m+1}v_N\right\|^2_{\beta,p}\right)\Upsilon(2\beta/p).
\end{eqnarray*}

This proves~\eqref{recursive} for $k=m+1$ and all $n\geq 0$, in the sense that $v_{N+1}(t,x)\in\mathbf D^{m+1,p}$, for all $p\geq 2$.

The proof is not complete yet, as we need to address the  case that $\d\neq\alpha_{m+1}$. Let $\d=\{\d_1,\cdots,\d_l\}$. Since by definition
\begin{align*}
D^\d v_{N+1}(t,x)=D_{\d_1}^{|\d_1|}v_{N+1}(t,x)\cdots D_{\d_l}^{|\d_l|}v_{N+1}(t,x),
\end{align*}
then
\begin{align*}
\E\|D^{|\d|} v_{N+1}(t,x)&\|^p_{H^{\otimes m+1}}\\&=\E(\|D^{|\d_1|}v_{N+1}(t,x)\|^p_{H^{\otimes|\d_1|}}\cdots \|D^{|\d_l|}v_{N+1}(t,x)\|^p_{H^{\otimes|\d_l|}}),
\end{align*}
where, $l\geq 2$, and $|\d_1|+\cdots+|\d_l|=m+1.$ 
Then by the generalized H\"older's inequality,
\begin{align*}
\left\{\E\|D^{|\d|} v_{N+1}(t,x)\|^p_{H^{\otimes m+1}}\right\}^{l}\leq\E\|D^{|\d_1|}v_{N+1}(t,x)\|^{lp}_{H^{\otimes|\d_1|}}&\times\cdots\\
\times \E\|D^{|\d_l|}&v_{N+1}(t,x)\|^{lp}_{H^{\otimes|\d_l|}}.
\end{align*}
Equivalently,
\begin{align*}
e^\frac{-\beta t}{lp}&\left\{\E\|D^{|\d|} v_{N+1}(t,x)\|^p_{H^{\otimes m+1}}\right\}^{1/p}\\
&\leq \left\{e^{-\beta t}\E|\G_{t,x}^{|\d_1|}v_{N+1}|^{lp}\right\}^\frac{1}{lp}\times\cdots
\times \left\{e^{-\beta t}\E|\G_{t,x}^{|\d_l|}v_{N+1}|^{lp}\right\}^\frac{1}{lp}.
\end{align*}
We optimize, first the right-hand-side and then the left-hand-side of the latter inequality over all $t>0$ and $x\in\T$ in order to find that 
\begin{align*}
\|\G^{\d} v_{N+1}\|_{\frac{\beta}{l},p}\leq \|\G^{|\d_1|}v_{N+1}\|_{\beta,lp}\cdots \|\G^{|\d_l|}v_{N+1}\|_{\beta,lp}.
\end{align*}
If we replace $\beta$ by $l\beta$, then we have 
\begin{align}\label{d12}
\|\G^{\d} v_{N+1}\|_{\beta,p}\leq \|\G^{|\d_1|}v_{N+1}\|_{l\beta,lp}\cdots \|\G^{|\d_l|}v_{N+1}\|_{l\beta,lp}\notag\\
=\Pi_{j=1}^l\|\G^{|\d_j|}v_{N+1}\|_{l\beta,lp}<\infty.
\end{align}
Therefore, $v_n\in D^{m+1,p}$ for all $n$. This finishes the proof. 
\end{proof}
\begin{rmk}
Similar to what we did in Proposition~\ref{D1pu}, we can iterate ~\eqref{recursive}, and choose $\beta>0$ sufficiently large to obtain 
\begin{equation*}
\sup_n\E\|D^mv_{n}(t,x)\|^p_{H^{\otimes m}}<\infty,
\end{equation*}
which in turn implies that 
\begin{equation}\label{Dmpvn}
\sup_n\|v_n(t,x)\|_{m,p}<\infty.
\end{equation}
\end{rmk}
\subsection{Proof of the Theorem~\ref{4.1}}
We prove Theorem~\ref{4.1}  by applying induction on the order of the derivative $k$. In Propostion~\ref{D1pu} we showed that $u\in \mathbf D^{1,p}$ and its derivative $Du$ satisfies~\eqref{Du} for $k=1$.  

Assume now that $u\in \mathbf D^{k,p}<\infty$ for all $k\leq m-1$, $p\geq1$ and the $k$th derivative $D^ku$ satisfies~\eqref{Du} for $k=1,\cdots,m-1$. This together with~\eqref{Dmpvn} imply that $u(t,x)\in \mathbf D^{m,p}$. 
Next we show that ~\eqref{Du} also holds for $k=m$. This proof is basically repeating what we did for the proof of ~\eqref{recursive}, and therefore we avoid going through the details. Define 
\begin{eqnarray*}
c_n^2(t,x)=
\frac{1}{2}\|\G_{t,x}^m\left(v_{n+1}-u\right)\|_{L^p(\Omega)}^2,\hspace{1.1 in}\\
b^2_n(t,x)=\left\{\E\left\|q_{t-\bullet}(x,\ast)D^{m-1}_\diamond(\sigma(v_n(\bullet,\ast))-\sigma(u(\bullet,\ast)))\right\|_{H^{\otimes m}}^p\right\}^{2/p}\\
a^2_n(t,x)=\left\{\E\left\|\intt\intT q_{t-r}(x,z)D^m(\sigma(v_n(r,z))-\sigma(u(r,z)))w(dr,dz)\right\|_{H^{\otimes m}}^p\right\}^{2/p}.
\end{eqnarray*}
The goal is to show that $\lim_{n\to\infty}\sup_{0<t\leq T}\sup_{x\in\T}c_n^2(t,x)=0$. 
By the triangular inequality,
\begin{eqnarray*}
c_n^2(t,x)\leq b^2_n(t,x)+a^2_n(t,x),
\end{eqnarray*}
A similar argument as the proof of Proposition~\ref{Gv_n} leads to the following bound on $b_n$:
\begin{eqnarray}
e^{-2\beta t/p}b_n(t,x)^2\leq m\|\G^{m-1}(\sigma(v_n)-\sigma(u))\|_{\beta,p}^2\Upsilon(2\beta/p)\notag.
\end{eqnarray}
Finding an upper bound for $a_n(t,x)$ is similar to what we have done for $F_1$, which led to~\eqref{F_1betap}. For $a_n$ we have 
\begin{eqnarray*}
e^{-2\beta t/p}a_n^2(t,x)
\leq C_p\|\G^m(\sigma(v_n)-\sigma(u))\|_{\beta,p}^2\Upsilon(2\beta/p).
\end{eqnarray*}
Another application of~\eqref{Gssigma}, together with the induction hypothesis shows that 
$$e^{-2\beta t/p}a_n^2(t,x)\leq C_p\left(\lambda_n+\|\G^m(v_n-u)\|_{\beta,p}^2\right)\Upsilon(2\beta/p),$$
where $\lambda_n$ is independent of $t$ and $x$, and $\lambda_n\to0$ as $n\to\infty$. Therefore 
\begin{align*}
e^{-2\beta t/p}c_n^2(t,x)\leq
K_{p,m}(\theta_n+\|\G^m(v_n-u)\|_{\beta,p}^2)\Upsilon(2\beta/p),
\end{align*}
where $K_{p,m}=\max\{C_p,m\}$ and $\theta_n$ is independent of $t$ and $x$ and $\theta_n\to0$ as $n\to\infty$.
Therefore, by choosing $\beta$ sufficiently large so that $K_{p,m}\Upsilon(2\beta/p)<1$, we have 
\begin{align*}
\|\G^m(v_{n+1}-u)\|_{\beta,p}^2\leq C_{k,m,\beta}(\theta_n+\|\G^m(v_n-u)\|_{\beta,p}^2).
\end{align*}
The latter inequality implies that $\|\G^m(v_n-u)\|_{\beta,p}^2\to0$ as $n\to\infty$ which is equivalent to want we wanted to prove.
\begin{rmk}
The value of $\beta$ transfers through the induction steps; i.e., its value in the $m$th step must be at least as large as its value in $(m-1)$th step. 
\end{rmk}
\section{Analysis of the Malliavin Matrix}
Next we study the $L^p(\Omega)$-integrability of the inverse of the Malliavin matrix. Here is the first place where we use the second assumption, Hypothesis $\bf H\-$\ref{condition2}, of this paper,  which asserts that there are $1<\alpha<\beta\leq2$ and $0<C_1<C_2,$ such that 
\begin{equation*}
C_1|n|^{\alpha}\leq\Re\varPhi(n)\leq C_2|n|^{\beta}.
\end{equation*} 
When ~\eqref{A SPDE} is linear, we know that if $\beta\leq1$, then a solution does not exist. The case that $\Phi(n)=n^2$ is well known \cite{BP98}. In this section, we want to show that for every $(t,x)\in E_T$, and $p\geq 2$,
\begin{equation}\label{I}
\E(\|Du(t,x)\|^{-p})<\infty.
\end{equation}

\begin{lma}\label{V(t)}
Let $u$ be the solution to SPDE~\eqref{A SPDE}. Let $p\geq 1$.
\begin{enumerate}
\item \label{V(T)1} If we define 
$$V(t)=\sup_{x\in[0,2\pi]}\E\left(\int_0^t\intT |D_{s,y}u(t,x)|^2dy\-ds\right)^{p/2},$$
then 
\begin{eqnarray*}
V(t)\leq C_{T,p}t^{(\alpha-1)p/2\alpha}.
\end{eqnarray*}
\item\label{V(T)2}  If we fix $t$ and for any $\delta\in(0,t)$ define 
$$W(\delta)=\sup_{x\in[0,2\pi]}\E\left(\int_{t-\delta}^t\intT |D_{s,y}u(t,x)|^2dy\-ds\right)^{p/2},$$
then 
\begin{eqnarray*}
W(\delta)\leq C_{T,p}\delta^{(\alpha-1)p/2\alpha}.
\end{eqnarray*}
\end{enumerate}

\begin{proof}
We prove only the the first part, as the second part can be proved similarly. Because 
\begin{eqnarray*}
D_{sy}u(t,x)=q_{t-s}(x,y)\sigma(u(s,y))+\int_0^t\intT q_{t-r}(x,z)D_{s,y}\sigma(u(r,z))\-w(dr,dz),
\end{eqnarray*}
it follows that
\begin{align*}
\|D&u(t,x)\|_H\leq\lip_\sigma\left\{\intt\intT q_{t-s}^2(x,y)\-dy\right\}^{1/2}\\
&+\lip_{\sigma'}\left\{\intt\intT\left|\int_0^t\intT q_{t-r}(x,z)D_{s,y}u(r,z)\-w(dr,dz)\right|^2\-dy\-ds\right\}^{1/2}.
\end{align*}
If we let $C_\sigma=\max\{\lip_\sigma,\lip_{\sigma'}\}$, then by ~\eqref{LB},
\begin{align*}
\frac{1}{C_\sigma}&\|Du(t,x)\|_H\leq C_\alpha t^\frac{\alpha-1}{2\alpha}\\
&+\left\{\intt\intT\left|\int_0^t\intT q_{t-r}(x,z)D_{s,y}u(r,z)\-w(dr,dz)\right|^2\-dy\-ds\right\}^{1/2}.
\end{align*}
If we let $C=\left(2C_\sigma\max\{C_\alpha, 1\}\right)^p$, then take the expectation of the $p$th power to get
\begin{eqnarray*}
\frac{1}{C}\E\|Du(t,x)\|^p_H\leq t^\frac{(\alpha-1)p}{2\alpha}+\E\left\|\int_0^t\intT q_{t-r}(x,z)D(u(r,z))\-w(dr,dz)\right\|^p_H,
\end{eqnarray*}
where $\|\cdot\|_H$ denotes the Hilbert space norm with respect to variables $s$ and $y$.
Then by Burkholder's inequality for the Hilber-space-valued martingales we have 
\begin{eqnarray*}
\frac{1}{C_{p,\sigma}}\E\|Du(t,x)\|^p_H\leq t^\frac{(\alpha-1)p}{2\alpha}+\E\left(\int_0^t\intT q^2_{t-r}(x,z)\|D(u(r,z))\|^2_H\-dz\-dr\right)^{p/2},
\end{eqnarray*}
where the constant $C_{p,\sigma}$ depends on $p$ through the Burkholder's inequality. 
Next, by observing
\begin{equation*}
q^2_{t-r}(x,z)\|D(u(r,z))\|^2_H=
q^\frac{2p-4}{p}_{t-r}(x,z)\left(q_{t-r}^\frac{4}{p}(x,z)\|D(u(r,z))\|^2_H\right),
\end{equation*}
we may apply the H\"older inequality to obtain
\begin{align*}
\int_0^t\intT q^2_{t-r}(x,z)\|D(u(r,z))\|^2_H\-dz\-dr\leq\left(\int_0^t\intT q^2_{t-r}(x,z)drdz\right)^{(p-2)/p}\times\\
\left(\int_0^t\intT q_{t-r}^2(x,z)\|D(u(r,z))\|^p_H\-dz\-dr\right)^{p/2}.
\end{align*}
Another application of~\eqref{LB} yields 
\begin{align*}
\int_0^t\intT q^2_{t-r}(x,&z)\|D(u(r,z))\|^2_H\-dz\-dr\\
&\leq C_{p,\alpha} t^\frac{(p-2)(\alpha-1)}{p\alpha}\left(\int_0^t\intT q_{t-r}^2(x,z)\|D(u(r,z))\|^p_H\-dz\-dr\right)^{p/2}.
\end{align*}
Then, for a new constant $C$, we have
\begin{align*}
\frac{1}{C}\E\|&Du(t,x)\|^p_H\\
&\leq t^\frac{(\alpha-1)p}{2\alpha}+C_{p,\alpha} t^\frac{(p-2)(\alpha-1)}{2\alpha}\int_0^t\intT q_{t-r}^2(x,z)\E\|D(u(r,z))\|^p_H\-dz\-dr\\
&\leq t^\frac{(\alpha-1)p}{2\alpha}+ t^\frac{(p-2)(\alpha-1)}{2\alpha}\int_0^t\sup_{z\in[0,2\pi]}\E\|D(u(r,z))\|^p_H(t-r)^{-1/\alpha}\-dr.
\end{align*}
Then
\begin{eqnarray*}
V(t)\leq C\left(t^\frac{(\alpha-1)p}{2\alpha}+ t^\frac{(p-2)(\alpha-1)}{2\alpha}\int_0^tV(r)(t-r)^{-1/\alpha}\-dr\right).
\end{eqnarray*}
Next, apply H\"older's inequality to the integral on the right in order to find that 
\begin{eqnarray*}
\int_0^tV(r)(t-r)^{-1/\alpha}dr\leq\left(\int_0^tV(r)^{p_1}dr\right)^{1/p_1}\left(\int_0^T(t-r)^{-q_1/\alpha}\right)^{1/q_1},
\end{eqnarray*}
where $p_1=(\alpha+1)/2$ and $q_1=(\alpha+1)/(\alpha-1)$. Because $q_1/\alpha<1$,
$$\left(\int_0^T(t-r)^\frac{-q_1}{\alpha}\right)^\frac{1}{q_1}<\infty.$$
Therefore there is $C$ such that
$$V(t)\leq C\left(t^\frac{(\alpha-1)p}{2\alpha}+ t^\frac{(p-2)(\alpha-1)}{2\alpha}\left(\int_0^tV(r)^{p_1}dr\right)^{1/p_1}\right).$$
Consequently for some $C>0,$
$$V(t)^{p_1}\leq C\left(t^\frac{(\alpha-1)pp_1}{2\alpha}+ t^\frac{p_1(p-2)(\alpha-1)}{2\alpha}\int_0^tV(r)^{p_1}dr\right).$$
Again, since $0\leq t\leq T$, then we can choose $C$ such that 
$$V(t)^{p_1}\leq C\left(t^\frac{(\alpha-1)pp_1}{2\alpha}+\int_0^tV(r)^{p_1}dr\right).$$
Then by the Gronwall's lemma we have 
$$V(t)\leq Ct^\frac{(\alpha-1)p}{2\alpha}.$$
This concludes the proof of \eqref{V(T)1}. \eqref{V(T)2} is proved similarly.
\end{proof}
\end{lma}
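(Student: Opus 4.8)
The plan is to read the Malliavin derivative $D_{s,y}u(t,x)$ off~\eqref{D1u} and to estimate the $H$-norm $\|Du(t,x)\|_H^2=\intt\intT|D_{s,y}u(t,x)|^2\,dy\,ds$ directly. Applying the triangle inequality in $H$ splits $\|Du(t,x)\|_H$ into a deterministic ``free'' term $\|q_{t-\bullet}(x,\ast)\sigma(u(\bullet,\ast))\|_H$ and the $H$-norm of the stochastic integral. Since $\sigma$ is bounded, the free term is at most $(\sup_{x}|\sigma(x)|)\big(\intt\|q_s\|_{L^2(\T)}^2\,ds\big)^{1/2}$, and~\eqref{LB} bounds $\intt\|q_s\|_{L^2(\T)}^2\,ds$ by $A_2\,t^{1-1/\alpha}$; raising to the $p$th power this already produces the target rate $t^{(\alpha-1)p/2\alpha}$. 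So the whole difficulty sits in the stochastic term, which I would control through a self-referential bound on the functional $V$.

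For the stochastic term I would first apply Burkholder's inequality for Hilbert-space-valued martingales to pass from $\E\big\|\intt\intT q_{t-r}(x,z)D_{s,y}\sigma(u(r,z))\,w(dr\,dz)\big\|_H^p$ to $\E\big(\intt\intT q_{t-r}^2(x,z)\|D\sigma(u(r,z))\|_H^2\,dz\,dr\big)^{p/2}$, then use boundedness of $\sigma'$ to replace $\|D\sigma(u)\|_H$ by $\|Du\|_H$. The key device is the factorization $q^2=q^{(2p-4)/p}\cdot q^{4/p}$, to which Hölder's inequality with conjugate exponents $p/(p-2)$ and $p/2$ applies: this extracts a factor $\big(\intt\intT q_{t-r}^2\,dz\,dr\big)^{(p-2)/p}$, bounded by $C\,t^{(p-2)(\alpha-1)/p\alpha}$ via~\eqref{LB}, and leaves $\intt\intT q_{t-r}^2(x,z)\,\E\|Du(r,z)\|_H^p\,dz\,dr$. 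Bounding $\intT q_{t-r}^2(x,z)\,dz=\|q_{t-r}\|_{L^2(\T)}^2\le A_2(t-r)^{-1/\alpha}$ by~\eqref{qULB} and taking suprema over $x$ produces the Volterra inequality
\begin{equation*}
V(t)\le C\Big(t^{(\alpha-1)p/2\alpha}+t^{(p-2)(\alpha-1)/2\alpha}\intt V(r)(t-r)^{-1/\alpha}\,dr\Big).
\end{equation*}

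The hard part is converting this singular Volterra inequality into the claimed bound, since the kernel $(t-r)^{-1/\alpha}$ blows up at $r=t$ and Gronwall does not apply directly. The remedy is a Hölder bootstrap: choose conjugate exponents $p_1,q_1$ with $1<q_1<\alpha$, which is possible \emph{precisely} because $\alpha>1$ (this is exactly where the hypothesis $\alpha>1$ enters), so that $\intt(t-r)^{-q_1/\alpha}\,dr<\infty$; Hölder then gives $\intt V(r)(t-r)^{-1/\alpha}\,dr\le C_T\big(\intt V(r)^{p_1}\,dr\big)^{1/p_1}$. Raising the whole inequality to the power $p_1$ and using $t\le T$ to absorb the bounded prefactors into a single constant yields the nonsingular form $V(t)^{p_1}\le C\big(t^{(\alpha-1)pp_1/2\alpha}+\intt V(r)^{p_1}\,dr\big)$, to which Gronwall's lemma applies and gives $V(t)^{p_1}\le C\,t^{(\alpha-1)pp_1/2\alpha}$ for $t\le T$; taking the $p_1$th root proves~\eqref{V(T)1}.

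For~\eqref{V(T)2} I would run the identical chain of estimates with all time-integrations restricted to the window $[t-\delta,t]$. The only genuine change is in the free term, where $\int_{t-\delta}^t\|q_{t-s}\|_{L^2(\T)}^2\,ds=\int_0^\delta\|q_s\|_{L^2(\T)}^2\,ds\le A_2\,\delta^{1-1/\alpha}$ again by~\eqref{LB}; the stochastic term and the Volterra--Gronwall step then go through with $\delta$ in place of $t$, giving $W(\delta)\le C_{T,p}\,\delta^{(\alpha-1)p/2\alpha}$.
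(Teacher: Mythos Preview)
Your proposal is correct and follows essentially the same route as the paper: triangle inequality in $H$, the bound~\eqref{LB} for the free term, Burkholder's inequality for the Hilbert-space-valued stochastic integral, the H\"older factorization $q^2=q^{(2p-4)/p}q^{4/p}$, the resulting Volterra inequality for $V$, a H\"older bootstrap to desingularize the kernel $(t-r)^{-1/\alpha}$, and Gronwall. The only cosmetic difference is that you leave the H\"older exponent $q_1$ free subject to $1<q_1<\alpha$, whereas the paper fixes a specific pair $(p_1,q_1)$.
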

The following corollary is an estimate on the Malliavian's derivative of the solution of the equation~\eqref{A SPDE}.
\begin{crl}\label{I^p}
Let $u$ be the solution to the equation~\eqref{A SPDE}, and $\varPhi$ the L\'evy exponent corresponding the differential operator $\sL$. Define 
\begin{equation}\label{I_del}
I_\delta=\int_{t-\delta}^t\intT\left|\int_0^t\intT q_{t-r}(x,z)D_{sy}\sigma(u(r,z)\-w(dr,dz)\right|^2dy\-ds,
\end{equation}
where $q=q_t(x)$ is the transition density corresponding to $\sL$.
If the $\varPhi$ satisfies Hypothesis {\bf H\-\ref{condition2}}, then 
$$\E\left(|I_\delta|^p\right)\leq C\delta^{2p(\alpha-1)/\alpha},$$
\begin{proof}
By the Burkholder's inequality
\begin{eqnarray*}
\E(|I_\delta|^p)=\E\left(\left|\int_{t-\delta}^t\intT\left|\int_s^t\intT q_{t-r}(x,z)D_{s,y}\sigma(u(r,z))\-w(dr,dz)\right|^2\-dy\-ds\right|^p\right)\\
\leq c_p\lip_\sigma^{2p}\E\left|\int_{t-\delta}^t\intT q^2_{t-r}(x,z)\left(\int_{t-\delta}^r\intT|D_{sy}u(r,z)|^2dyds\right)dzdr\right|^p.
\end{eqnarray*}

Raising to the power $1/p$ and applying Minkowski's inequality gives us
\begin{eqnarray*}
\left\{\E(|I_\delta|^p)\right\}^{1/p}
\leq c^{1/p}_p\lip_\sigma^{2}\left\{\E\left|\int_{t-\delta}^t\intT q^2_{t-r}(x,z)\left(\int_{t-\delta}^r\intT|D_{s,y}u(r,z))|^2dyds\right)dzdr\right|^p\right\}^{1/p}\\
\leq c^{1/p}_p\lip_\sigma^{2}\left(\int_{t-\delta}^t\intT q^2_r(z,x)dzdr\right)\sup_{(r,z)\in[0,\delta]\times[0,2\pi]}\left\{\E\left|\int_{t-\delta}^t\intT|D_{s,y}u(r,z))|^2dyds\right|^p\right\}^{1/p}.
\end{eqnarray*}
Then, by ~\eqref{LB}, and Lemma~\ref{V(t)},
\begin{align*}
\E(|I_\delta|^p)
&\leq C\left(\int_0^\delta\intT q^2_r(z,x)dzdr\right)^p\\&
\times\sup_{(r,z)\in[0,\delta]\times[0,2\pi]}\E\left|\int_{t-\delta}^t\intT|D_{s,y}u(r,z))|^2dyds\right|^p\\
&\leq C\delta^{(\alpha-1)p/\alpha}\delta^{(\alpha-1)p/\alpha}.
\end{align*}

\end{proof}
\end{crl}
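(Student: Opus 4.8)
The plan is to recognize $I_\delta$ as the squared norm of a single Hilbert-space-valued stochastic integral and then apply Burkholder's inequality. Write $H_\delta:=L^2([t-\delta,t]\times\T)$, with a generic point of $[t-\delta,t]\times\T$ denoted $(s,y)$. For each $(r,z)\in[0,t]\times\T$ let $\Phi(r,z)\in H_\delta$ be the function $(s,y)\mapsto q_{t-r}(x,z)\,\sigma'(u(r,z))\,D_{s,y}u(r,z)\,\mathbf 1_{\{s\le r\}}$; this is adapted and square-integrable by the $\mathbf D^{1,p}$-estimates of Section~4. Since $D_{s,y}\sigma(u(r,z))=\sigma'(u(r,z))D_{s,y}u(r,z)$ vanishes for $r<s$, the inner stochastic integral in~\eqref{I_del} is exactly $M(s,y)$, the $(s,y)$-evaluation of the $H_\delta$-valued It\^o integral $M:=\int_0^t\intT\Phi(r,z)\,w(dr\,dz)$, so that $I_\delta=\|M\|_{H_\delta}^2$ and $\E(|I_\delta|^p)=\E\|M\|_{H_\delta}^{2p}$.

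First I would apply Burkholder's inequality for Hilbert-space-valued martingales to $M$, obtaining
\[
\E(|I_\delta|^p)\le c_p\,\E\Big(\int_0^t\intT\|\Phi(r,z)\|_{H_\delta}^2\,dz\,dr\Big)^{p}.
\]
Because $\|\Phi(r,z)\|_{H_\delta}^2=q^2_{t-r}(x,z)\,\sigma'(u(r,z))^2\int_{t-\delta}^{r}\intT|D_{s,y}u(r,z)|^2\,dy\,ds$ (the indicator $\mathbf 1_{\{s\le r\}}$ cuts the $s$-integration at $r$, and the factor is zero for $r<t-\delta$), and since $|\sigma'|\le\lip_\sigma$, this reduces the problem to estimating the $p$-th moment of
\[
J:=\int_{t-\delta}^t\intT q^2_{t-r}(x,z)\Big(\int_{t-\delta}^{t}\intT|D_{s,y}u(r,z)|^2\,dy\,ds\Big)dz\,dr,
\]
where I have enlarged $\int_{t-\delta}^r$ to $\int_{t-\delta}^t$ (harmless, since $D_{s,y}u(r,z)=0$ for $s>r$).

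Next I would raise to the power $1/p$ and use Minkowski's inequality to interchange the $L^p(\Omega)$-norm with the $dz\,dr$ integration against the weight $q^2_{t-r}(x,z)$, then dominate the inner factor by its supremum over $(r,z)$:
\[
\{\E(|I_\delta|^p)\}^{1/p}\le c_p^{1/p}\lip_\sigma^{2}\Big(\int_{t-\delta}^t\intT q^2_{t-r}(x,z)\,dz\,dr\Big)\sup_{(r,z)}\Big\{\E\Big|\int_{t-\delta}^{t}\intT|D_{s,y}u(r,z)|^2\,dy\,ds\Big|^p\Big\}^{1/p}.
\]
For the space-time integral I would substitute $\tau=t-r$ and invoke~\eqref{LB} to get $\int_{t-\delta}^t\intT q^2_{t-r}\,dz\,dr=\int_0^\delta\|q_\tau\|_{L^2(\T)}^2\,d\tau\le A_2\,\delta^{(\alpha-1)/\alpha}$. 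For the supremum I would apply the second part of Lemma~\ref{V(t)} (which holds at any terminal time in $[0,T]$ and for every exponent), now with $2p$ in place of $p$; since the effective window $[t-\delta,r]$ has length at most $\delta$, this gives a bound $C\,\delta^{(\alpha-1)p/\alpha}$. Raising back to the $p$-th power and multiplying the two estimates yields
\[
\E(|I_\delta|^p)\le C\,\delta^{(\alpha-1)p/\alpha}\cdot\delta^{(\alpha-1)p/\alpha}=C\,\delta^{2p(\alpha-1)/\alpha},
\]
which is the assertion.

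The main obstacle I anticipate is the very first step: correctly packaging $I_\delta$ as $\|M\|_{H_\delta}^2$ for the ambient space $H_\delta=L^2([t-\delta,t]\times\T)$, so that Burkholder's inequality applies with the stochastic integration running over $(r,z)$ while the Hilbert norm is taken over $(s,y)$. This requires a stochastic-Fubini type justification that $\Phi$ is a genuine $H_\delta$-valued integrand and that the $dy\,ds$ integration may be interchanged with the $w(dr\,dz)$ integration; the square-integrability and adaptedness needed for this come from the estimates of Section~4 and Lemma~\ref{V(t)}. Once this identification is in place the remaining steps are routine, the only bookkeeping subtlety being to invoke Lemma~\ref{V(t)} at parameter $2p$ rather than $p$, so that the two factors of $\delta^{(\alpha-1)p/\alpha}$ combine to the stated exponent $2p(\alpha-1)/\alpha$.
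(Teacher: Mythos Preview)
Your proposal is correct and follows essentially the same route as the paper: package $I_\delta$ as the squared $H_\delta$-norm of a Hilbert-space-valued stochastic integral, apply Burkholder's inequality in $(r,z)$, then Minkowski's inequality to pull out the $q^2_{t-r}$ weight, and conclude via~\eqref{LB} and Lemma~\ref{V(t)}. You are in fact more explicit than the paper on two points---the stochastic-Fubini/Hilbert-valued identification in the first step, and the need to invoke Lemma~\ref{V(t)} with exponent $2p$ rather than $p$---both of which are implicit in the paper's argument.
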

Finally we quote from \cite[page 97]{minicourse} a lemma which allows us to put together the results of Lemma~\ref{V(t)} and Corollary~\ref{I^p} and prove the existence of the negative moments~\eqref{I}. 

\begin{lma}\label{5.3} Let $F$ be nonnegative random variable. Then property~\eqref{I} holds for all $p\geq 2$ if and only if for every $q\in[2,\infty)$ there exists $\ve_0=\ve_0(q)>0$, such that 
$$\P(\|Du(t,x)\|^2_H<\ve)<C\ve^q,$$
for all $\ve<\ve_0$.
\end{lma}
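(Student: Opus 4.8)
The plan is to prove the two implications of the equivalence separately, writing $F:=\|Du(t,x)\|_H^2$ for the nonnegative random variable in question, so that $\|Du(t,x)\|^{-p}=F^{-p/2}$ and property~\eqref{I} for all $p\geq2$ becomes the statement that $\E[F^{-s}]<\infty$ for every $s\geq1$. The whole lemma is then a standard real-variable interplay between negative moments and the decay of the left tail $\P(F<\ve)$ near the origin, handled by Markov's inequality in one direction and the layer-cake (distribution-function) formula in the other.

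First I would treat the direction in which~\eqref{I} implies the small-ball estimate, which is immediate. Fix $q\in[2,\infty)$; since $q\geq1$, the hypothesis gives $\E[F^{-q}]<\infty$. Applying Markov's inequality to $F^{-q}$,
$$\P(F<\ve)=\P\!\left(F^{-q}>\ve^{-q}\right)\leq\ve^{q}\,\E[F^{-q}],$$
so the desired bound holds with $C=\E[F^{-q}]$ and in fact for all $\ve>0$ (one may take $\ve_0=\infty$).

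For the converse, assume the small-ball estimate for every $q$ and fix $s\geq1$. I would use the representation
$$\E[F^{-s}]=\int_0^\infty\P\!\left(F^{-s}>\lambda\right)d\lambda=\int_0^\infty\P\!\left(F<\lambda^{-1/s}\right)d\lambda,$$
and then exploit the freedom to choose $q$ as large as needed. Selecting $q=2s$ (which lies in $[2,\infty)$ since $s\geq1$) with its associated $\ve_0=\ve_0(2s)$, I split the integral at $\lambda_0:=\ve_0^{-s}$. On $[0,\lambda_0]$ the integrand is at most $1$, contributing at most $\lambda_0<\infty$; on $(\lambda_0,\infty)$ one has $\lambda^{-1/s}<\ve_0$, so the hypothesis yields $\P(F<\lambda^{-1/s})\leq C\lambda^{-q/s}=C\lambda^{-2}$, whose integral over $(\lambda_0,\infty)$ is finite. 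Hence $\E[F^{-s}]<\infty$, and since $s\geq1$ was arbitrary, property~\eqref{I} holds for all $p\geq2$.

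There is no serious obstacle here; the only point requiring care is the exponent bookkeeping in the converse direction — one must pick $q$ strictly larger than the moment order $s$ (taking $q=2s$ makes the tail exponent $q/s=2>1$) so that the integral over $(\lambda_0,\infty)$ converges. This is precisely the ``$\epsilon$-room'' that the availability of the small-ball bound for \emph{all} $q$ provides, and it is the reason the quantitative estimates of Lemma~\ref{V(t)} and Corollary~\ref{I^p} suffice to conclude~\eqref{I}.
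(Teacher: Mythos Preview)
Your proof is correct. Note, however, that the paper does not actually prove Lemma~\ref{5.3}: it is simply quoted from \cite[page~97]{minicourse} and used as a black box. So there is no ``paper's own proof'' to compare against; you have supplied the standard argument (Markov's inequality for one direction, the layer-cake formula with $q=2s$ for the other), which is exactly the proof one finds in the cited reference.
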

\begin{proof}[Proof of Theorem~\ref{main}]
We need only to show that ~\eqref{I} holds for every $(t,x)\in E_T$ and all $p\geq 2$.
Let $q=q_t(x)$ be the transition density corresponding to $\varPhi$ and $\sL$.
Since
\begin{eqnarray*}
q_{t-s}(y,x)\sigma(u(s,y))=D_{sy}u(t,x)-\int_0^t\intT q_{t-r}(x,z)D_{sy}\sigma(u(r,z))\-w(dr,dz),
\end{eqnarray*}

considering the fact that $\sigma\geq c>0$, then
\begin{eqnarray*}
|D_{sy}u(t,x)|^2\geq\frac{c^2}{2}q^2_{t-s}(y,x)-\left|\int_0^t\intT q_{t-r}(x,z)D_{sy}\sigma(u(r,z))\-w(dr,dz)\right|^2.
\end{eqnarray*}
Therefore
\begin{eqnarray*}
\|Du(t,x)\|_H^2\geq\int_{t-\delta}^t\intT|D_{sy}u(t,x)|^2dy\-ds\hspace{3 in}\\
\geq\frac{c^2}{2}\int_{t-\delta}^t\intT q^2_{t-s}(y,x)dyds
-\int_{t-\delta}^t\intT\left|\int_0^t\intT q_{t-r}(x,z)D_{sy}\sigma(u(r,z))\-w(dr,dz)\right|^2dyds.
\end{eqnarray*}
If we let $\tau=t-s$ in the first integral on the right, then we get 
\begin{eqnarray*}
\|Du(t,x)\|_H^2\geq
J_\delta-I_\delta\hspace{.5 in}\forall \delta\in(0,t),
\end{eqnarray*} 
where $I_\delta$ is defined in~\eqref{I_del} and 
$$J_\delta:=\frac{c^2}{2}\int_0^\delta\|q_u\|_{L^2(\T)}du.$$
If we choose $\delta>0$ such that $J_\delta-\ve>0$, then by the Chebyshev's inequality 
\begin{equation*}
\P(\|D_{sy}u(t,x)\|_H^2<\ve)\leq\P(J_\delta-I_\delta<\ve)\leq\frac{\E|I_\delta|^p}{(J_\delta-\ve)^p}.
\end{equation*}
Then, by Corollary~\ref{I^p} and ~\eqref{LB} we have
$$\P(\|Du(t,x)\|_H^2<\ve)\leq\frac{C_1\delta^{2(\alpha-1)p/\alpha}}{(\frac{C}{2}\delta^{(\beta-1)/\beta}-\ve)^p}.$$
Take $\delta=(4\epsilon/C)^\frac{\beta}{\beta-1}$ to get 
$$\P(\|Du(t,x)\|_H^2<\epsilon)\leq C_{\alpha,p}\ve^{\theta p},$$
where $\theta=\frac{2\beta(\alpha-1)}{\alpha(\beta -1)}-1>0$. This and Lemma~\ref{5.3} complete the proof.\end{proof}
\bibliography{growthrefs2}
\bibliographystyle{plain}

\end{document}